\newtheorem{theorem}{Theorem}[section]
\newtheorem{lemma}[theorem]{Lemma}
\newtheorem{convention}[theorem]{Convention}
\theoremstyle{definition}
\newtheorem{definition}[theorem]{Definition}
\theoremstyle{remark}
\newtheorem*{remark*}{Remark}
\newtheorem{claim}{Claim}
\newcommand{\abs}[1]{\lvert#1\rvert}
\setlist{nolistsep} 
\newcommand\bD{\chi}
\newcommand\cT{\mathcal{T}}
\newcommand\cG{\mathcal{G}}
\newcommand\cS{\mathcal{S}}
\newcommand\bU{\mathbf{U}}
\newcommand\bV{\mathbf{V}}
\newcommand\bW{\mathbf{W}}
\newcommand\eU{E_\mathbf{U}}
\newcommand\eV{E_\mathbf{V}}
\newcommand\eW{E_\mathbf{W}}
\newcommand\concat{\symbol{94}}
\newcommand{\bigsame}{\mathsf{same}}
\newcommand{\Permit}{\mathop{\mathrm{Permit}}}
\newcommand\res{{\,\upharpoonright\,}}
\newcommand{\loa}{\ell}
\newcommand\nbd{\nobreakdash-\hspace{0pt}}
\DeclareMathOperator{\tp}{q}
\DeclareMathOperator{\type}{candidate}
\DeclareMathOperator{\per}{h}
\DeclareMathOperator{\cf}{CF}
\DeclareMathOperator{\dw}{dw}
\DeclareMathOperator{\test}{test}
\DeclareMathOperator{\Test}{Test}
\DeclareMathOperator{\visit}{\mathsf{visit}}
\begin{document}
\author[Cai]{Mingzhong Cai}
\address[Cai]{Hyperimmune Books\\
Suwanee, GA 30024\\
USA}
\email{\href{mailto:mingzhongcai@gmail.com}{mingzhongcai@gmail.com}}

\author[Yiqun Liu]{Yiqun Liu}
\address[Yiqun Liu]{Office of the President\\
National University of Singapore\\
Singapore 119077\\
SINGAPORE}
\email{\href{mailto:liuyq@nus.edu.sg}{liuyq@nus.edu.sg}}

\author[Yong Liu]{Yong Liu}
\address[Yong Liu]{School of Information Engineering\\
Nanjing Xiaozhuang University\\
CHINA}
\email{\href{mailto:liuyong@njxzc.edu.cn}{liuyong@njxzc.edu.cn}}

\author[Peng]{Cheng Peng}
\address[Peng]{Department of Mathematics\\
Hebei University of Technology\\
CHINA}
\email{\href{mailto:pengcheng@hebut.edu.cn}{pengcheng@hebut.edu.cn}}

\author[Yang]{Yue Yang}
\address[Yang]{Department of Mathematics\\
National University of Singapore\\
Singapore 119076\\
SINGAPORE}
\email[Yang]{\href{mailto:matyangy@nus.edu.sg}{matyangy@nus.edu.sg}}

\subjclass[2020]{03D25}

\keywords{r.e.~degrees, minimal pair}

\thanks{Peng’s research was partially supported by NSF of China No. 12271264.
Yang’s research was partially supported by NUS grant WBS: R-146-000-337-114}

\begin{abstract}
Two nonzero recursively enumerable (r.e.) degrees $\mathbold{a}$ and $\mathbold{b}$ form a strong minimal pair if $\mathbold{a}\wedge \mathbold{b}=\mathbf{0}$ and $\mathbold{b}\vee \mathbold{x}\geq \mathbold{a}$ for any nonzero r.e.~degree $\mathbold{x}\leq \mathbold{a}$.
We prove that there is no strong minimal pair in the r.e.~degrees. 
Our construction goes beyond the usual \(\mathbf{0}'''\)\nbd{}priority arguments and we give some evidence to show that it needs \(\mathbf{0}^{(4)}\)\nbd{}priority arguments.
\end{abstract}

\title{On the nonexistence of a strong minimal pair}
\maketitle

\section{Introduction}

In paper~\cite{Barmpalias.Cai.ea:2015}, Barmpalias, Cai, Lempp and Slaman claimed the existence of a strong minimal pair.
Two nonzero recursively enumerable (r.e.) degrees $\mathbold{a}$ and $\mathbold{b}$ form a \emph{strong minimal pair} if
$\mathbold{a}\wedge \mathbold{b}=\mathbf{0}$ and for any nonzero r.e.~degree $\mathbold{x}\leq \mathbold{a}$,
$\mathbold{b}\vee \mathbold{x}\geq \mathbold{a}$. 
The notion of strong minimal pairs can also be viewed as a strengthening of the so-called ``Slaman triples''.  
Recall that three r.e.~degrees $\mathbold{a}, \mathbold{b}$ and $\mathbold{c}$ form a \emph{Slaman triple} if $\mathbold{a} \neq \mathbf{0}$, $\mathbold{c}\nleq \mathbold{b}$ 
and for any nonzero r.e.~degree $\mathbold{x}\leq \mathbold{a}$, $\mathbold{b}\vee \mathbold{x}\geq \mathbold{c}$. 
In a Slaman triple formed by \(\mathbold{a}\), \(\mathbold{b}\) and \(\mathbold{c}\), it is clear that \(\mathbold{a}\) and \(\mathbold{b}\) form a minimal pair; if \(\mathbold{a}\) and \(\mathbold{c}\) coincide, then \(\mathbold{a}\) and \(\mathbold{b}\) will form a strong minimal pair.

We refer the readers to~\cite{Barmpalias.Cai.ea:2015} for the significance of this study,
as well as for its ``long and twisted'' history.  Their paper was devoted to show the existence of a strong minimal pair,
where they gave a detailed illustration of how to combine two sets of requirements before proceeding to the long and complicated full construction.
They also raised the open question whether there exists a ``two-sided'' strong minimal pair---a strong minimal pair as above
but with the extra clause ``and for any nonzero r.e.~degree $\mathbold{y}\leq \mathbold{b}$, $\mathbold{a}\vee \mathbold{y}\geq \mathbold{b}$''?

While trying to settle the existence of a two-sided strong minimal pair, we encountered a serious difficulty of dealing with three sets of requirements.  It turns out that their original construction cannot overcome this difficulty either.
Eventually, there is another twist in the history of the problem.  By employing three families of r.e.~sets, 
we are able to establish the \emph{nonexistence} of a strong minimal pair:
\begin{theorem} [Main]\label{thm:main}
  For any r.e.~degrees $\mathbold{a}$ and $\mathbold{b}$, either $\mathbold{a}\leq \mathbold{b}$ or
  there exists an r.e.~degree $\mathbold{x}\leq \mathbold{a}$ such that $\mathbold{x}\neq \mathbf{0}$
  and $\mathbold{x}\vee \mathbold{b}\not\geq \mathbold{a}$.
\end{theorem}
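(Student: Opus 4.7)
The nontrivial case is $\mathbold{a}\not\leq\mathbold{b}$; fix r.e.\ sets $A\in\mathbold{a}$ and $B\in\mathbold{b}$ with $A\not\leq_T B$, and build an r.e.\ set $X$ enumerated recursively in $A$ (so $X\leq_T A$) meeting
\[
\mathcal{P}_i : X\neq W_i \qquad \text{and} \qquad \mathcal{N}_e : \Phi_e(X\oplus B)\neq A
\]
for every $i$ and $e$. The first family forces $X$ to be nonrecursive, the second forces $X\oplus B\not\geq_T A$, and the condition $X\leq_T A$ is maintained by requiring $A$\nbd{}permission for every enumeration into $X$. For $\mathcal{P}_i$ I would use a standard Friedberg-style module: appoint a sequence of candidates, and put one into $X$ at a stage where $A$ permits below it and no higher-priority $\mathcal{N}$-restraint is violated. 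Since $A\not\leq_T B$ implies that $A$ is nonrecursive, $A$\nbd{}permissions occur unboundedly and every $\mathcal{P}_i$ is eventually satisfiable.

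The core of the proof, and the only place the hypothesis $A\not\leq_T B$ is used, is the $\mathcal{N}_e$-strategy together with its failure clause. For a single $\mathcal{N}_e$, I would maintain a sequence of potential witnesses $n_0<n_1<\cdots$ and for each $n_k$ run the basic module: at an $\mathcal{N}_e$\nbd{}expansionary stage $s$ at which $\Phi_{e,s}(X_s\oplus B_s;n_k)\!\downarrow=0$ and $n_k\notin A_s$, impose an $X$\nbd{}restraint below the use $\varphi_{e,s}(n_k)$; then either (a) the disagreement holds forever and $\mathcal{N}_e$ is satisfied at $n_k$, or (b) $n_k$ later enters $A$, in which case the $X$\nbd{}restraint forces the matching change of $\Phi_e(X\oplus B;n_k)$ from $0$ to $1$ to come from a change of $B$ below $\varphi_{e,s}(n_k)$. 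If outcome~(b) occurs for every $k$, the data $(n_k,s_k,\varphi_{e,s_k}(n_k))$, which is uniformly recursive in $B$ once the $B$\nbd{}uses stabilize, yields a $B$\nbd{}recursive procedure for $A$ on the $n_k$'s; by choosing the $n_k$'s to cover $\N$ uniformly one would deduce $A\leq_T B$, contradicting the hypothesis.

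Merging the two kinds of requirements is where the serious work begins. The $X$\nbd{}restraint imposed by $\mathcal{N}_e$ directly blocks the $A$\nbd{}permitted enumerations wanted by lower-priority $\mathcal{P}_i$, while enumerations wanted by $\mathcal{P}_i$ can destroy the very computations $\mathcal{N}_e$ is trying to preserve; both conflicts must be resolved on a priority tree that guesses, for each $\mathcal{N}_e$, which of its outcomes is true, together with a further guess on how many witnesses actually reach mode~(b). The key obstacle, which the abstract advertises as pushing the argument beyond the usual $\mathbf{0}'''$-level into a $\mathbf{0}^{(4)}$\nbd{}priority construction with three families of auxiliary r.e.\ sets, is that the would-be $B$\nbd{}computable reduction of $A$ must stay total and correct even as $X$ changes below the relevant uses due to lower positive strategies; the three auxiliary families should serve as bookkeeping that certifies, uniformly along the true path, that each apparent $X$\nbd{}side change in a preserved computation is in fact accompanied by a $B$\nbd{}change of the right kind. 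The main technical challenge I foresee is precisely the design and verification of these three families so that along the true path they simultaneously support the positive progress of the $\mathcal{P}_i$'s, the preservation role of the $\mathcal{N}_e$'s, and the failure-case reduction of $A$ to $B$ that ultimately delivers the contradiction.
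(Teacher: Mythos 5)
Your high-level setup — fix $A\not\leq_T B$, build an r.e.\ set $X\leq_T A$ by $A$\nbd{}permission meeting requirements that make $X$ nonrecursive and $B\oplus X\not\geq_T A$, organize on a priority tree, and if the negative strategies all fail derive $A\leq_T B$ — is indeed the paper's starting point. The genuine gap is the single candidate $X$. A requirement $\Gamma_e(BX)\neq A$ has an outcome in which $\Gamma_e(BX)$ really does compute $A$, and in that outcome $X$ is simply not a witness for the theorem; no amount of restraint or bookkeeping can rule this out for every $e$ simultaneously, which is precisely the trap that led the authors of~\cite{Barmpalias.Cai.ea:2015} to believe that strong minimal pairs \emph{do} exist. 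The paper's fix is genuine nonuniformity: it builds three nested \emph{families of candidate witnesses} $U$, $V_\alpha$ and $W_{\alpha_0,\alpha_1}$. If $U$ fails at a tree node $\alpha_0$ (its $G$\nbd{}outcome is $\infty$ but no divergent point is ever exhibited below), work switches to the fresh candidate $V_{\alpha_0}$; if $V_{\alpha_0}$ fails at $\alpha_1$, switch to $W_{\alpha_0,\alpha_1}$; and one then proves that $W_{\alpha_0,\alpha_1}$ \emph{must} succeed, on pain of $A\leq_T B$. The three families are alternative witnesses among which the true path chooses, not a certification scheme riding alongside a single $X$ as you suggest, and Lemma~\ref{lem:full requirement} is exactly the statement that some one of them is ``good.''

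The reason the third layer closes the argument, and what drives the $\mathbf{0}^{(4)}$\nbd{}level, is a minimal-pair style flexibility that a single set cannot provide. At an $R$- or $S$\nbd{}node one builds $\Phi(B)$ or $\Psi(B)$ intending to compute $A$ from $B$ alone, and the correctness of a value $\Psi(B;y)$ is preserved because \emph{either} the $U$\nbd{}side \emph{or} the $V$\nbd{}side of the relevant oracle stays put while the other is allowed to change (this is Definition~\ref{def:S con}(\ref{it:S con UV}) and Lemma~\ref{lem:S con}). With one set there is no spare side to protect. Moreover, deciding whether the $U$\nbd{}side will in fact stay put is itself a $\Pi_3$\nbd{}question: a $C$\nbd{}node opens a $\Sigma_3$\nbd{}branch that keeps working for $U$ and, in parallel, a $\Pi_3$\nbd{}branch that abandons $U$ in favor of $V$ and $W$, both visited at the same stage with priority between them decided dynamically via the pairing/disarming mechanism. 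Neither the switch among three candidate sets nor the two-world, dynamic-priority exploration is anticipated in your proposal, and these are the ideas the theorem actually needs.
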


As an immediate consequence, the degrees $\mathbold{a}$ and $\mathbold{c}$ in a Slaman triple cannot coincide; in fact, $\mathbold{a}$ and $\mathbold{b}\vee\mathbold{c}$ also form a minimal pair. 
To see this, suppose toward a contradiction that there exists some nonzero r.e.~degree $\mathbold{x}$ below both $\mathbold{a}$ and $\mathbold{b}\vee\mathbold{c}$. Consider any nonzero r.e.~degree $\mathbold{w}$ below $\mathbold{x}$, we would have $\mathbold{b}\vee \mathbold{w}\geq \mathbold{c}$ and therefore  $\mathbold{b}\vee \mathbold{w}\ge\mathbold{b}\vee \mathbold{c}\ge \mathbold{x}$, which implies that $\mathbold{x}$ and $\mathbold{b}$ form a strong minimal pair, contradicting our main theorem.
This fact echoes the gap-cogap construction used in building a Slaman triple (see, for example, Shore and Slaman
\cite{Shore.Slaman:1993}), where elements are enumerated into $A$ and $B\cup C$ at alternating stages.

Besides the strong minimal pair problem itself, we are also interested in the techniques involved in the solution.
In the paper~\cite{Barmpalias.Cai.ea:2015}, a novel $c$\nbd{}outcome was introduced to handle some of the conflicts in a way that goes beyond $\mathbf{0}'''$-priority method.
In this paper, we use an $(\omega+1)$\nbd{}branching tree to organize our construction, furthermore, each $(\omega+1)$\nbd{}st branch can be considered as a gateway to a parallel $\Pi_3$\nbd{}world.
A similar
design has been used before, notably by Shore in~\cite{Shore:1988}.
However, the conflicts between our strategies seem more severe than those in Shore's. For instance, for certain two nodes on the priority tree,
one inside a $\Pi_3$\nbd{}world, the other outside, we cannot determine \emph{a priori} which node has higher priority; we have to assign priority dynamically:
whichever acts first will change the environment of the other.  Much more care has to be applied in showing the existence of the true path.
This brings up the question of what counts as a typical $\mathbf{0}^{(4)}$\nbd{}priority argument.
A $\mathbf{0}'''$\nbd{}priority argument can now be routinely presented in a framework where finite injuries happen on the true path.
Our construction seems to suggest that handling the interactions inside and outside the $\Pi_3$\nbd{}world must be a feature in any framework for a $\mathbf{0}^{(4)}$\nbd{}priority argument.
We hope that our work will bring us closer to a canonical framework for the $\mathbf{0}^{(4)}$\nbd{}priority argument.

\section{Toward the stage by stage construction}
\subsection{Preliminaries and conventions}
A \emph{Turing functional} \(\Gamma\) is an r.e.\ set of of triples, called the \emph{axioms}, \((\sigma,n,i)\in 2^{<\omega}\times \omega\times\{0,1\}\) such that if both \((\sigma,n,i)\), \((\tau,n,j)\in \Gamma\) and \(\sigma\preceq \tau\), then \(i=j\).
We write \(\Gamma(X;n)\downarrow = i\) if \(\exists l\ (X\res l,n,i)\in \Gamma\); in this case, we also define the \emph{use function} \(\gamma(X;n)\) to be the least such \(l\) plus 1. 
We write \(\Gamma(X;n)\uparrow\) if \(\forall l,i (X\res l, n, i)\notin \Gamma\). 
By assuming that Turing functionals are computing initial segments, we may further assume that the domain of a given Turing function is downward closed,
and its use function is nondecreasing with respect to its arguments.
For two subsets \(A\) and \(B\) of natural numbers, we write \(A\le_T B\) if there exists a Turing functional \(\Phi\) such that for each \(n\), \(A(n)=\Phi(B;n)\). Following the assumption above, whenever we write \(\Phi(B;n)=A(n)\), we mean \(\Phi(B;i)=A(i)\) for all \(i\le n\).
We will use \([s]\) to indicate that all calculations are restricted to a particular stage~\(s\), and in particular, all numerical parameters occurred are \(\leq s\).

For \(\Gamma(X)=A\), the length of agreement at stage~\(s\) is defined as
\[
    \loa(s)=\{n\le s\mid \forall k\le n, \Gamma(X;k)[s]=A(k)[s]\}.
\]
We say that \(s\) is an \emph{expansionary stage} if \(\loa(s)>\loa(s^*)\) for all $s^*<s$.

We define \(\bigsame(X,k,s,t)\) if and only if 
\[
(\forall n< k)(\forall s') [s\le s'\le t\rightarrow X(n)[s]=X(n)[s']].
\]
\(\bigsame(X,k,s,t)\) says that the set \(X\) up to the first \(k\) digits does not change from stage \(s\) to \(t\).

\subsection{Requirements}\label{sec:req}
Given r.e.\ sets \(A\) and \(B\) with \(A\nleq_T B\), we build an r.e.\ set \(X\) satisfying the following requirements:

The first one is the permitting requirement:
\begin{itemize}
\item \(\Permit(X)\): \(X\leq_T A\).
\end{itemize}

Fix an effective enumeration of Turing functionals \((\Gamma_e,\Delta_e)_{e\in \omega}\), we have the diagonalization requirements, here the letters $G$ and $D$ come from $\Gamma$ and $\Delta$ respectively:
\begin{itemize}
\item \(G_e(X)\): \(\Gamma_e(BX)\neq A\); and
\item \(D_e(X)\): \(\Delta_e\neq X\).
\end{itemize}

\(G_e(X)\) has three possible outcomes: Case (1), there is an \(n\) such that \(\Gamma(BX;n)\) never agrees with \(A(n)\); 
Case (2), there is a divergent point \(n\) such that \(\gamma(BX;n)[s]\to \infty\) as \(s\to \infty\); and Case (3), \(\Gamma_e(BX)=A\).

The strategy of \(G_e(X)\) is to check if the length of agreement between \(\Gamma(BX)\) and \(A\) goes to infinity.  
If the answer is no, then we win by the \(\Sigma_2\)\nbd{}outcome which is Case (1).  If the answer is yes, we have a \(\Pi_2\)\nbd{}outcome indicating either Cases (2) or (3). Then we try to build a Turing functional \(\Omega\) so that
\[
    \Gamma_e(BX)=A \Rightarrow \Omega(B)=A.
\]
In a usual priority argument, if \(\Omega(B)=A\) turns out to be partial, then we shall exhibit a divergent point of \(\Gamma_e(BX)\) as in Case (2); however, in our construction we might also have the possibility of Case (3) and we shall begin to work on \emph{another} set instead of \(X\)---we take a nonuniform approach.
In fact, instead of constructing a single r.e.\ set \(X\), we construct three types of candidates whose types are denoted by \(\bU\), \(\bV\) and \(\bW\). 
There will be a unique candidate for a \(\bU\)\nbd{}set, which will be denoted by \(U\). 
There will be countably many candidates each for \(\bV\)\nbd{}sets and \(\bW\)\nbd{}sets and will be denoted by \(V_\alpha\) or \(W_{\alpha,\beta}\) where the subscripts refer to their ``parent nodes''. We will argue that at least one of these candidates satisfies all requirements.

\subsection{Priority tree}
Fix a list of requirements as
\[
    G_0(X)<D_0(X)<G_1(X)<D_1(X)<\cdots,
\]
where \(X\) stands for a candidate that we build. Depending on whether \(X\) is a \(\bU\)\nbd{}set, \(\bV\)\nbd{}set or \(\bW\)\nbd{}set, the strategies will be different. 
Besides the nodes that work directly for our requirements, we also have other auxiliary nodes.

The priority tree \(\cT\) is defined recursively as follows. The root of \(\cT\) is assigned to \(\Permit(U)\). 
A node \(\alpha\) assigned to \(*\) will be called a \(*\)\nbd{}node. 
A node \(\alpha\) is said to be \emph{in the \(\Sigma_3\)\nbd{}world} if it is not declared to be in a \emph{\(\Pi_3\)\nbd{}world} (see~\ref{it:pt C} below).
For references, the first few nodes of the priority tree are given in Figure~\ref{fig:priority tree}, where we also omit some of the outcomes, particularly those with label ``\(0\)'', to save space. 
\begin{figure}
    \centering
    \includegraphics[width=\textwidth]{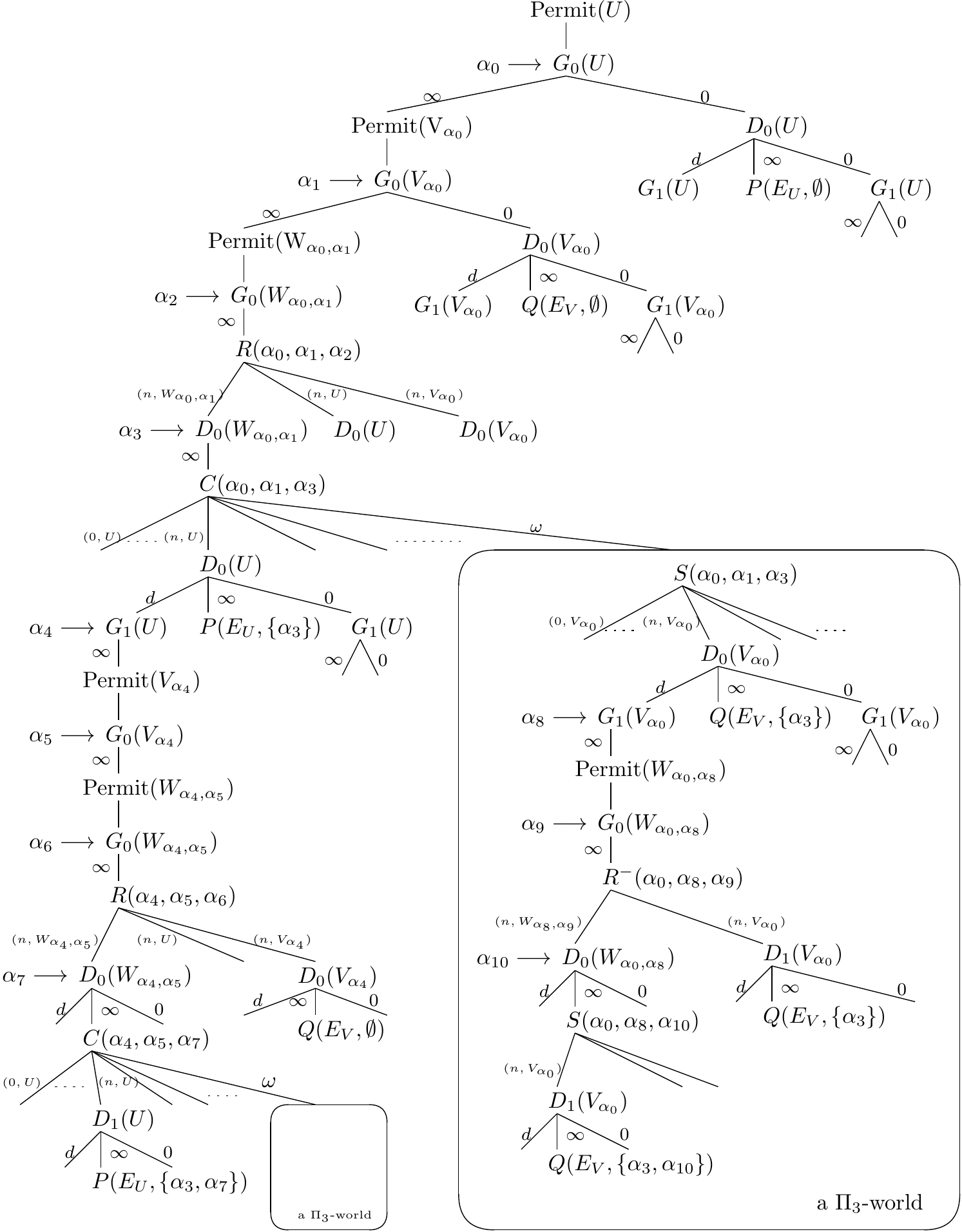}
    \caption{the priority tree}\label{fig:priority tree}
\end{figure}

Suppose that \(\alpha\) has been assigned.
\begin{enumerate}[label=(T\arabic*)]
    \item Suppose that \(\alpha\) is a \(\Permit(X)\)\nbd{}node, then \(\alpha\) has a unique outgoing edge labelled \(0\) and \(\alpha\concat 0\) is assigned to \(G_0(X)\).
\end{enumerate}
\begin{remark*}
    A \(\Permit(X)\)\nbd{}node is the place where we introduce a new candidate \(X\) and is the \emph{only} place where we enumerate numbers into \(X\). 
    The permitting is done in a delayed fashion, after a number gets permitted by $A$, its fate will be determined only by the next $\Permit(X)$-stage.
    After introducing the set $X$, we work on \(G_0(X)\) which is the first one in its requirement list.
\end{remark*}

\begin{enumerate}[resume*]
    \item Suppose that \(\alpha\) is a \(G_e(X)\)\nbd{}node, then it has two outgoing edges labelled \(\infty<_L 0\), here \(<_L\) means ``to the left of''.
    \begin{enumerate}[label=(T\arabic{enumi}.\arabic*)]
        \item\label{it:pt GU} If \(X=U\), then \(\alpha\concat \infty\) is assigned to \(\Permit(V_\alpha)\) and \(\alpha\concat 0\) to \(D_e(U)\).
        \item\label{it:pt GV} If \(X=V_{\alpha_0}\) for some \(\alpha_0\subseteq \alpha\), then \(\alpha\concat \infty\) is assigned to \(\Permit(W_{\alpha_0,\alpha})\) and \(\alpha\concat 0\) to \(D_e(V_{\alpha_0})\).
        \item\label{it:pt GW} If \(X=W_{\alpha_0,\alpha_1}\) for some \(\alpha_0\subseteq\alpha_1\subseteq \alpha\), 
        then \(\alpha\concat \infty\) is assigned to \(R(\alpha_0,\alpha_1,\alpha)\) if \(\alpha\) is also in the \(\Sigma_3\)\nbd{}world, and to \(R^-(\alpha_0,\alpha_1,\alpha)\) if \(\alpha\) is in a \(\Pi_3\)\nbd{}world; \(\alpha\concat 0\) is assigned to \(D_e(W_{\alpha_0,\alpha_1})\). 
    \end{enumerate}
\end{enumerate}


\begin{remark*}
    A \(G_e(X)\)\nbd{}node \(\alpha\) checks if the length of agreement of \(\Gamma_e(BX)=A\) goes to infinity: 
    if not, it has the \(\Sigma_2\)\nbd{}outcome \(0\) and \(G_e(X)\) is satisfied easily, and we move on to the next requirement \(D_e(X)\). 
    One may view \(\alpha\concat 0\) as a degenerated case, as the nodes extending \(\alpha\concat 0\) can safely ignore \(\alpha\). 
    In the case that \(G_e(X)\)\nbd{}node has the \(\Pi_2\)\nbd{}outcome \(\infty\), we first introduce more sets to form a three element group $(U, V_{\alpha_0}, W_{\alpha_0,\alpha_1})$.
    Only after we have \(X=W_{\alpha_0,\alpha_1}\) for some \(\alpha_0\) and \(\alpha_1\) with \(\alpha_0\concat \infty\subseteq \alpha_1\concat \infty\subseteq \alpha\concat \infty\), 
    we introduce an \(R(\alpha_0,\alpha_1,\alpha)\)\nbd{}node to detect a divergent point for
    $\Gamma$\nbd{}functionals at either \(\alpha_0\), \(\alpha_1\), or \(\alpha\). An \(R^-(\alpha_0,\alpha_1,\alpha)\)\nbd{}node is a degenerate case of $R$, where we detect a divergent point for $\Gamma$\nbd{}functionals at either \(\alpha_1\) or \(\alpha\).
    As one shall see, an $R^-$\nbd{}node necessarily lives in a $\Pi_3$\nbd{}world where the set $U$ is no longer considered.
\end{remark*}

\begin{enumerate}[resume*]
    \item Suppose that \(\alpha\) is a \(D_e(X)\)\nbd{}node, then \(\alpha\) has three outgoing edges labelled \(d<_L \infty<_L 0\).
    \begin{enumerate}[label=(T\arabic{enumi}.\arabic*)]
        \item\label{it:pt D-P} If \(X=U\), then \(\alpha\concat \infty\) is assigned to \(P(\eU,\eW)\) where \(E_\bU=\{\alpha\}\) and \(E_\bW\) is a finite set of nodes on the tree and will be defined later;
            \(\alpha\concat 0\) and \(\alpha\concat d\) are both assigned to \(G_{e+1}(U)\).
        \item\label{it:pt D-Q} If \(X=V_{\alpha_0}\), then \(\alpha\concat \infty\) is assigned to \(Q(\eV,\eW)\) where \(E_\bV=\{\alpha\}\) and \(E_\bW\) is a finite set of nodes on the tree and will be defined later; 
            \(\alpha\concat 0\) and \(\alpha\concat d\) are both assigned to \(G_{e+1}(V_{\alpha_0})\).
        \item\label{it:pt D-CS} If \(X=W_{\alpha_0,\alpha_1}\) and \(\alpha\) is in the \(\Sigma_3\)\nbd{}world, then \(\alpha\concat\infty\) is assigned to \(C(\alpha_0,\alpha_1,\alpha)\); 
        if \(X=W_{\alpha_0,\alpha_1}\) and \(\alpha\) is in a \(\Pi_3\)\nbd{}world, then \(\alpha\concat\infty\) is assigned to \(S(\alpha_0,\alpha_1,\alpha)\). 
        In both cases, \(\alpha\concat 0\) and \(\alpha\concat d\) are assigned to \(G_{e+1}(W_{\alpha_0,\alpha_1})\).
    \end{enumerate}
\end{enumerate}
\begin{remark*}
    A \(D_e(X)\)\nbd{}node \(\alpha\) first checks if it has some \(n\) such that \(\Delta_e(n)=0\) but \(X(n)=1\). 
    If so, \(\alpha\) has the \(\Sigma_1\)\nbd{}outcome \(d\) and \(D_e(X)\) is satisfied in the easiest way. 
    If not, \(\alpha\) checks if the length of agreement between \(\Delta_e\) and \(X\) goes to infinity. 
    The \(\Sigma_2\)\nbd{}outcome \(0\) indicates that the length of agreement stops increasing from some stage onward; the \(\Pi_2\)\nbd{}outcome \(\infty\) indicates the other case. 
    Below the \(\Pi_2\)\nbd{}outcome, we do not try to diagonalize against \(\Delta_e\) immediately if \(X=W_{\alpha_0,\alpha_1}\). 
    The reason is that $D_e(W)$ usually showed up before some $D(U)$ or $D(V)$ nodes appear.  
    We are ready to attack only when we teamed up this $D_e(W)$ with either \(D(U)\) or \(D(V_{\alpha_0})\) for some \(\alpha_0\subseteq \alpha\). 
    The decision of choosing which set to attack and why it works are the core of our construction and will be discussed later. 
\end{remark*}

\begin{enumerate}[resume*]
    \item Suppose that \(\alpha\) is an \(R(\alpha_0,\alpha_1,\alpha_2)\)- or an \(R^-(\alpha_0,\alpha_1,\alpha_2)\)\nbd{}node, where \(\alpha_0\), \(\alpha_1\) and \(\alpha_2\) are \(G_{e_0}(U)\)-, \(G_{e_1}(V_{\alpha_0})\)- and \(G_{e_2}(W_{\alpha_0,\alpha_1})\)\nbd{}nodes for some \(e_0\), \(e_1\) and \(e_2\), respectively.
    Let \(W=W_{\alpha_0,\alpha_1}\) and \(V=V_{\alpha_0}\).
    \begin{enumerate}[label=(T\arabic{enumi}.\arabic*)]
        \item\label{it:pt R} If \(\alpha\) is an \(R(\alpha_0,\alpha_1,\alpha_2)\)\nbd{}node, then \(\alpha\) has infinitely many outgoing edges, labelled and ordered by
        \[
            (0,W) <_L (0,U) <_L (0,V) <_L \cdots <_L (n,W) <_L (n,U) <_L (n,V) <_L \cdots.
        \]
        Furthermore, for each \(n\in \omega\), \(\alpha\concat (n,W)\) is assigned to \(D_{e_2}(W)\), \(\alpha\concat (n,U)\) assigned to \(D_{e_0}(U)\) and \(\alpha\concat (n,V)\) assigned to \(D_{e_1}(V)\).
        \item\label{it:pt R-} If \(\alpha\) is an \(R^-(\alpha_0,\alpha_1,\alpha_2)\)\nbd{}node, then \(\alpha\) has infinitely many outgoing edges, labelled and ordered by
        \[
            (0,W) <_L (0,V) <_L \cdots <_L (n,W) <_L (n,V) <_L \cdots.
        \]
        Furthermore, for each \(n\in \omega\), \(\alpha\concat (n,W)\) is assigned to \(D_{e_2}(W)\) and \(\alpha\concat (n,V)\) assigned to \(D_{e_1}(V)\).
    \end{enumerate}
    \((n,W)\) is said to be a \emph{divergent outcome} for \(\alpha_2\); \((n,V)\) a divergent outcome for \(\alpha_1\); 
    and \((n,U)\) a divergent outcome for \(\alpha_0\) if \(\alpha\) is also an \(R(\alpha_0,\alpha_1,\alpha_2)\)\nbd{}node.
\end{enumerate}
\begin{remark*}
    A divergent outcome \((n,W)\) for \(\alpha_2\) indicates that we have some number \(m\), not necessarily \(n\) (see Section~\ref{sec:parameter} for the discussion on edge parameters), 
    such that \(\Gamma_{e_2}(BW;m)\uparrow\) and \(G_{e_2}(W)\) is therefore satisfied. 
    Extending \((n,W)\)\nbd{}outcome, we shall begin to work for the next requirement \(D_{e_2}(W)\) in the requirement list. 
    An \(R(\alpha_0,\alpha_1,\alpha_2)\)\nbd{}node is necessarily in the \(\Sigma_3\)\nbd{}world while \(R^-(\alpha_0,\alpha_1,\alpha_2)\)\nbd{}node is necessarily in a \(\Pi_3\)\nbd{}world.

\end{remark*}

\begin{enumerate}[resume*]
    \item\label{it:pt C} Suppose that \(\alpha\) is a \(C(\alpha_0,\alpha_1,\alpha_2)\)\nbd{}node, then \(\alpha\) has infinitely many outgoing edges, labelled and ordered with order type \(\omega+1\) by
    \[
        (0,U) <_L \cdots <_L (n,U) <_L \cdots <_L \omega.
    \]
    Any extension of \(\alpha\concat \omega\) is declared to be \emph{in the \(\Pi_3\)\nbd{}world of \(\alpha\)}. 
    Furthermore, \((n,U)\) is said to be a divergent outcome for \(\alpha_0\) and \(\alpha\concat (n,U)\) is assigned to \(D_{e_0}(U)\), where $e_0$ comes from the index of the \(G_{e_0}(U)\)\nbd{}node \(\alpha_0\). 
    \(\alpha\concat \omega\) is assigned to \(S(\alpha_0,\alpha_1,\alpha_2)\).
\end{enumerate}

\begin{remark*}
    At a \(C(\alpha_0,\alpha_1,\alpha_2)\)\nbd{}node \(\alpha\) we try to detect a divergent point of \(\Gamma_{e_0}(BU)\). 
    If we find one, we continue to work on the next requirement \(D_{e_0}(U)\); otherwise, we shall not work for the candidate \(U\) anymore;
    we switch to work for the candidate \(V_{\alpha_0}\) (together with some sets from $\bW$) by assigning \(\alpha\concat \omega\) to \(S(\alpha_0,\alpha_1,\alpha_2)\).
    Note that by~\ref{it:pt D-CS} there will be no more \(C\)\nbd{}node extending \(\alpha\concat \omega\) and therefore there will be no nested \(\Pi_3\)\nbd{}worlds.
\end{remark*}

\begin{enumerate}[resume*]
    \item\label{it:pt S} Suppose that \(\alpha\) is an \(S(\alpha_0,\alpha_1,\alpha_2)\)\nbd{}node, where \(\alpha_1\) is a \(G_{e_1}(V_{\alpha_0})\)\nbd{}node for some \(e_1\). Let \(V=V_{\alpha_0}\). 
    Then \(\alpha\) has infinitely many outgoing edges labelled and ordered by 
    \[
        (0,V) <_L (1,V) <_L \cdots <_L (n,V) <_L \cdots.
    \]
    Furthermore, \((n,V)\) is said to be a divergent outcome for \(\alpha_1\) and for each \(n\in \omega\), \(\alpha\concat (n,V)\) is assigned to \(D_{e_1}(V)\).
\end{enumerate}
\begin{remark*}
    An \(S(\alpha_0,\alpha_1,\alpha_2)\)\nbd{}node \(\alpha\) is necessarily in the \(\Pi_3\)\nbd{}world for some \(C\)\nbd{}node \(\beta\) with \(\beta\concat \omega\subseteq \alpha\). 
    At \(\alpha\), we try to detect a divergent point of \(\Gamma_{e_1}(BV)\). If we find one, we shall work on the next requirement \(D_{e_1}(V)\). 
    If the detection fails, we shall conclude that \(A\le_T B\) toward a contradiction.
\end{remark*}

\begin{enumerate}[resume*]
    \item\label{it:pt P} Suppose that \(\alpha\) is a \(P(\eU,\eW)\)\nbd{}node. Then \(\alpha\) is a terminal node on the priority tree. We let \(E_\bU=\{\alpha^-\}\) as has been defined in~\ref{it:pt D-P}.
    For each \(\beta\subseteq \alpha\) where \(\beta\) is a \(C(\alpha_0,\alpha_1,\alpha_2)\)\nbd{}node, we enumerate \(\alpha_2\) into \(\eW\). 
    
    \item\label{it:pt Q} Suppose that \(\alpha\) is a \(Q(\eV,\eW)\)\nbd{}node. Then \(\alpha\) is a terminal node on the priority tree. We let \(E_\bV=\{\alpha^-\}\) as has been defined in~\ref{it:pt D-Q}. For each \(\beta\subseteq \alpha\) where \(\beta\) is an \(S(\alpha_0,\alpha_1,\alpha_2)\)\nbd{}node, we enumerate \(\alpha_2\) into \(\eW\). 

\end{enumerate}
\begin{remark*}
    A \(P(\eU,\eW)\)\nbd{}node \(\alpha\) will produce many potential diagonalizing witnesses for \(\beta\in E_\bU\) or \(\xi\in E_\bW\), each of whom is a \(D\)\nbd{}node. 
    And \(\alpha\) bets that \(A\) will permit some of the diagonalizing witnesses eventually. 
    If \(A\) never permits, then \(\alpha\) will demonstrate \(A\) to be recursive toward a contradiction. 
    Otherwise, one of the \(D\)\nbd{}node reaches a \(\Sigma_1\)\nbd{}outcome and is satisfied forever. Consequently, the $P$\nbd{}node is switched off true path.
    Similar discussion applies to a \(Q\)\nbd{}node.
\end{remark*}

This finishes the definition of the priority tree \(\cT\). 




\begin{lemma}\label{lem:full requirement}
    Let \(\rho\) be an infinite path along \(\cT\). Then there is a (unique) \(X\) such that \(\Permit(X)\), and for each \(e\), \(G_e(X)\) and \(D_e(X)\) are assigned to some node in \(\rho\). 
    Moreover, for this $X$, if \(\alpha\in \rho\) is assigned to \(G_e(X)\), then either
    \begin{enumerate}
        \item \(\alpha\concat 0 \in \rho\), or
        \item \(\alpha\concat \infty \in \rho\) and there is some \(\beta\in \rho\) such that \(\beta\concat (n,X) \in \rho\) where \((n,X)\) is the divergent outcome for \(\alpha\);
    \end{enumerate}
    if \(\alpha\in \rho\) is assigned to \(D_e(X)\), then either \(\alpha\concat 0\) or \(\alpha\concat d\) is in \(\rho\).
\end{lemma}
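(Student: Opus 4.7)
The plan is to trace \(\rho\) from the root and identify a unique \emph{active} candidate \(X\) whose complete requirement list \(G_e(X), D_e(X)\) lies on \(\rho\). Two structural observations ground the analysis. First, no terminal \(P\)- or \(Q\)\nbd{}node lies on \(\rho\) since \(\rho\) is infinite; hence every \(D_e(X)\)\nbd{}node on \(\rho\) with \(X\in\{U, V_{\alpha_0}\}\) takes outcome \(0\) or \(d\). Second, for each candidate \(Y\) with \(\Permit(Y)\in\rho\), the set \(I_Y=\{e: G_e(Y) \text{ is assigned to a node on } \rho\}\) is downward closed: every \(G_{e+1}(Y)\)\nbd{}node is a child of some \(D_e(Y)\)\nbd{}node, and every \(D_e(Y)\)\nbd{}node is either \(G_e(Y)\concat 0\) or an outcome of an \(R/C/S\)\nbd{}node whose relevant \(\alpha_i\)\nbd{}parameter is a \(G_e(Y)\)\nbd{}node already on \(\rho\). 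Symmetric reasoning shows \(J_Y=\{e: D_e(Y) \text{ on } \rho\}\) is downward closed, and combined with the first observation, \(I_Y=\omega\) iff \(J_Y=\omega\); the task reduces to finding a unique \(Y\) with \(I_Y=\omega\).

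For existence, I argue by contradiction. If \(I_Y\) is finite for every candidate \(Y\) introduced on \(\rho\), then \(k_U=\max I_U\) is defined and any \(G_{k_U}(U)\)\nbd{}node \(\alpha^*\in\rho\) must take outcome \(\infty\) (else \(D_{k_U}(U)\), hence \(G_{k_U+1}(U)\), would lie on \(\rho\)), so \(\Permit(V_{\alpha^*})\in\rho\). Iterating, \(l_V=\max I_{V_{\alpha^*}}\) forces a \(G_{l_V}(V_{\alpha^*})\)\nbd{}node \(\beta^*\in\rho\) to take \(\infty\), introducing \(W_{\alpha^*,\beta^*}\); and \(m_W=\max I_{W_{\alpha^*,\beta^*}}\) forces a \(G_{m_W}(W_{\alpha^*,\beta^*})\)\nbd{}node \(\gamma^*\in\rho\) to take \(\infty\), yielding an \(R\)- or \(R^-\)\nbd{}node \(\eta\in\rho\) with parameters \((\alpha_0,\alpha_1,\alpha_2)=(\alpha^*,\beta^*,\gamma^*)\). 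At \(\eta\), every outcome on \(\rho\) contradicts a maximality: \((n,W)\) puts \(D_{m_W}(W_{\alpha^*,\beta^*})\) on \(\rho\); \((n,V)\) puts \(D_{l_V}(V_{\alpha^*})\) on \(\rho\); and \((n,U)\), available only at \(R\)\nbd{}nodes in the \(\Sigma_3\)\nbd{}world, puts \(D_{k_U}(U)\) on \(\rho\). A parallel argument treats the subcases where \(\rho\) reaches a \(C\)- or \(S\)\nbd{}node via a \(D(W)\)-\(\infty\)-outcome, and where \(\rho\) enters the \(\Pi_3\)\nbd{}world via some \(C\)-\(\omega\)-outcome (the chain then re-starts below the entry, with \(U\) absent, and terminates at an \(R^-\)- or \(S\)\nbd{}node whose outcomes supply the analogous contradictions). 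Hence some \(I_X=\omega\).

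For uniqueness, observe that any second candidate \(Y'\neq X\) with \(I_{Y'}=\omega\) would require \(\rho\) to return to \(Y'\)\nbd{}work infinitely often; but each leaving of \(Y'\)\nbd{}work on \(\rho\) goes through a \(G\)-\(\infty\)\nbd{}outcome introducing a brand-new \(V\) or \(W\)\nbd{}candidate, and a single \(Y'\) cannot be re-entered once a fresh candidate below it has been introduced, so no fixed \(Y'\) can accumulate infinitely many of its requirements. For the outcome conditions: if \(\alpha\in\rho\) is assigned \(G_e(X)\), either \(\alpha\concat 0\in\rho\) (case (1)), or \(\alpha\concat\infty\in\rho\) and \(D_e(X)\in\rho\) (since \(J_X=\omega\)); in the latter case, some \(\beta\in\rho\)---an \(R/C/S\)\nbd{}node with \(\alpha\) among its parameters \(\alpha_0,\alpha_1,\alpha_2\)---takes a divergent \((n,X)\)\nbd{}outcome on \(\rho\), giving (2). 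If \(\alpha\in\rho\) is assigned \(D_e(X)\): the \(\infty\)\nbd{}outcome is excluded by the first observation for \(X\in\{U,V\}\); for \(X=W\), the \(C/S\)\nbd{}subtree below the \(\infty\)\nbd{}outcome has no path returning to \(W\)\nbd{}work, contradicting \(I_W=\omega\); hence \(\alpha\concat 0\) or \(\alpha\concat d\in\rho\). The main technical obstacle is the hub-node bookkeeping---tracking the parameters \((\alpha_0,\alpha_1,\alpha_2)\) at each \(R/R^-/C/S\)\nbd{}node and distinguishing \(\Sigma_3\)- from \(\Pi_3\)\nbd{}world behavior---to verify that the listed contradictions really close every case and, for uniqueness, that each fresh \(G\)-\(\infty\)\nbd{}outcome truly introduces a new \(\Permit\)\nbd{}node on \(\rho\).
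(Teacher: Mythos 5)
Your proof of existence takes essentially the same route as the paper: you trace the chain $U \to V_{\alpha^*} \to W_{\alpha^*,\beta^*}$ of candidates introduced on $\rho$, using the maximal index at which each candidate's $G$-node takes the $\infty$-outcome without a later divergent outcome, and then derive a contradiction at the resulting $R$-, $R^-$-, $C$- or $S$-node. The paper runs this directly (concluding $W_{\alpha_0,\alpha_1}$ is good), you run it as a reductio; that difference is cosmetic, and your $I_Y$, $J_Y$ bookkeeping is a clean way to package the downward-closure observations.

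There is, however, a genuine flaw in the uniqueness paragraph. You write that ``a single $Y'$ cannot be re-entered once a fresh candidate below it has been introduced,'' and conclude that no $Y'$ can accumulate infinitely many of its requirements. But re-entry into $Y'$-work after introducing a fresh candidate is precisely what the divergent outcomes at $R$-, $R^-$-, $C$- and $S$-nodes accomplish: after $\alpha_0^\frown\infty = \Permit(V_{\alpha_0})$ introduces $V_{\alpha_0}$, the path can take a $(n,U)$-outcome at a later $R$- or $C$-node, which is a $D(U)$-node, and $U$-work resumes. (And it had better: otherwise your argument would show that \emph{no} $Y$, including your candidate $X$, has $I_Y=\omega$, contradicting your own existence proof.) The correct observation is more specific: whenever a divergent outcome for a $G(U)$-node $\alpha_0$ is taken on $\rho$, no $V_{\alpha_0}$- or $W_{\alpha_0,\cdot}$-requirement is assigned below that point. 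So if $I_U=\omega$, then for each $V_{\alpha_0}$ introduced on $\rho$, the divergent outcome for $\alpha_0$ eventually occurs (since $J_U=\omega$), which cuts off $V_{\alpha_0}$-work and forces $I_{V_{\alpha_0}}$ to be finite; similarly down the chain. You should also close the acknowledged subcase in the existence argument: maximality of $m_W$ alone does not force $\gamma^*$ to take $\infty$, since $\gamma^{*\frown}0$ followed by $D_{m_W}(W)^\frown\infty$ leads to a $C$- or $S$-node rather than to $G_{m_W+1}(W)$; here one must observe that every outcome of that $C$- or $S$-node is a divergent outcome for $\alpha^*$ or $\beta^*$, which the hypothesis on $k_U$, $l_V$ has already ruled out.
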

\begin{proof}
A set \(X\) wanted by the lemma is said to be \emph{good}.

Along the infinite path \(\rho\), it is clear that if \(D_e(X)\concat 0 \in \rho\), then \(G_{e+1}(X)\in \rho\); if \(G_e(X)\concat 0\in\rho\), then \(D_e(X)\in \rho\); 
if \(G_e(X)\concat \infty\in \rho\) and there is some \(\beta\in \rho\) such that \(\beta\concat (n,X) \in \rho\) where \((n,X)\) is the \emph{divergent outcome} for \(G_e(X)\), 
then \(D_e(X)\in \rho\). Also note that \(D_e(U)\concat \infty \notin \rho\) and \(D_e(V)\concat \infty \notin \rho\) as both are assigned to either a $P$\nbd{} or a $Q$\nbd{}node, which are terminal nodes, 
contradicting that \(\rho\) is infinite.

Clearly the root is assigned to \(\Permit(U)\). Suppose that \(U\) is not good, then for some \(\alpha_0\in \rho\) assigned to \(G_{e_0}(U)\) for some \(e_0\), 
we must have that \(\alpha_0\concat \infty\in \rho\) but no divergent outcome for \(\alpha_0\) can be found in \(\rho\). 
Note that \(\alpha_0\concat\infty\) is assigned to \(\Permit(V_{\alpha_0})\). Suppose that \(V_{\alpha_0}\) is not good, 
then for some \(\alpha_1\in \rho\) assigned to \(G_{e_1}(V_{\alpha_0})\) for some \(e_1\) we must have that \(\alpha_1\concat\infty \in \rho\) but no divergent outcome for \(\alpha_1\) can be found in \(\rho\). 
Note that \(\alpha_1\concat\infty\) is assigned to \(\Permit(W_{\alpha_0,\alpha_1})\). We claim that \(W=W_{\alpha_0,\alpha_1}\) is good.

Case 1. Suppose \(D_{e}(W)\concat \infty \in \rho\). Then by~\ref{it:pt D-CS}, \(D_e(W)\concat \infty\) is either an \(S\)\nbd{}node or a \(C\)\nbd{}node. 
However, by~\ref{it:pt C}, \ref{it:pt S}, and the assumption that no divergent outcome for either \(\alpha_0\) or \(\alpha_1\) can be found in \(\rho\), we have a contradiction. 
Therefore if \(D_e(W)\in \rho\), then \(D_e(W)\concat 0\) or \(D_e(W)\concat d\) is in \(\rho\).

Case 2. Suppose \(G_e(W)\concat \infty \in \rho\). Then by~\ref{it:pt GW}, \(\xi=G_e(W)\concat \infty\) is either an \(R(\alpha_0,\alpha_1,\alpha_2)\)\nbd{}node or an \(R^-(\alpha_0,\alpha_1,\alpha_2)\)\nbd{}node. 
We should have \(\xi\concat (n,W)\in \rho\). That is, if \(G_e(W)\concat \infty\in \rho\), then \(D_{e+1}(W)\in \rho\).

Hence \(W\) is good.
\end{proof}

\begin{remark*}
We did not formally associate explicit ``requirements'' to the type of nodes labelled $R, R^-,S, C, P$ and $Q$.
If we do, the requirement associated with  \(R(\alpha_0,\alpha_1,\alpha_2)\)\nbd{}node would be: 
\[
        (\Gamma_{e_0}(BU)=A\land \Gamma_{e_1}(BV)=A \land \Gamma_{e_2}(BW)=A) \Rightarrow A=\Phi(B),
\]
similarly for $R^-$ and $S$-nodes.  
The requirements associated with $P(E_U,E_W)$-node would be:
\[
[\Delta_{e(\alpha)}=U\wedge \bigwedge_{\sigma\in E_W} \Delta_{*}=W_{\_,\_}]
\Rightarrow \Theta=A,
\]
where $\alpha$ is the unique $D_e(U)$-node in $E_U$ and $e(\alpha)$ is the index $e$; and the missing parameters in $\Delta_{*}=W_{\_,\_}$ can be found precisely because each $\sigma\in E_W$ is necessarily a
$D_{i}(W_{\alpha',\alpha''})$-node, similarly for $Q(E_V, E_W)$-nodes.
The $C(\alpha_0,\alpha_1,\alpha_2)$-nodes can be better viewed as to detect if $\Gamma_{e_0}(BU)$ is total where $e_0$ is the index of the Gamma functional at $\alpha_0$, which is part of strategy to satisfy $G_{e_0}(U)$. 
In this sense, a $C$-node plays the analog role of a $G(X)$-node on the tree which detects if there are infinitely many expansionary stages. 
From these examples, it is clear that the formalism would often involve several sets and functionals whose indices can only be found in a rather complicated way, which would not provide any further insight.

On the other hand, without spilling out the requirements associated with $R, R^-,S, C, P, Q$-nodes, we can still verify the correctness of the construction as follows: First establish the existence of an (infinite) true path $\rho$; then obtain the set \(X\) given by Lemma~\ref{lem:full requirement} from \(\rho\); finally argue that requirements \(\Permit(X)\), $G_e(X)$ and $D_e(X)$, as stated in subsection 2.2, are all satisfied for this $X$.  In this way, the requirements associated with $R, R^-,S, C, P, Q$-nodes are involved only indirectly.
\end{remark*}

From now on, we will not pay attention to the index \(e\) in a \(G_e(X)\)\nbd{}node or a \(D_e(X)\)\nbd{}node: if \(\alpha\) is a \(G_e(X)\)\nbd{}node, we will write \(\Gamma_\alpha\) for \(\Gamma_e\) and \(\gamma_\alpha\) for \(\gamma_e\); 
if \(\alpha\) is a \(D_e(X)\)\nbd{}node, we will write \(\Delta_\alpha\) for \(\Delta_e\). As there will be no ambiguities, \(\xi\concat (n,U)\) will be abbreviated as \(\xi\concat n\) if \(\xi\) is a \(C\)\nbd{}node. We leave the notation of \((n,V)\)\nbd{}outcome as it is.



\begin{definition}
    If \(\xi\) is a \(C(\alpha_0,\alpha_1,\alpha_2)\)\nbd{}node, we write 
    \[
    \cT[\xi, \Pi_3] = \{\beta\in \cT\mid \xi\concat \omega\subseteq \beta\}
    \] 
    and
    \[
        \cT[\xi, \Sigma_3] = \{\beta\in \cT\mid (\exists n)\xi\concat n\subseteq \beta\}.
    \]
\end{definition}
A node \(\beta\) is in a \(\Pi_3\)\nbd{}world if and only if there exists some \(C\)\nbd{}node \(\xi\) such that \(\beta\in \cT[\xi,\Pi_3]\).

We let \(\alpha^-\) denote the predecessor of \(\alpha\), and we write \(\type(\alpha)=X\) if \(\alpha\) is a \(\Permit(X)\)\nbd{}node, a \(G(X)\)\nbd{}node or a \(D(X)\)\nbd{}node.

\begin{definition}
    Suppose that \(\type(\alpha)=X\). If \(X=U\), then \(\per(\alpha)\) is the root of \(\cT\); 
    if \(X=V_{\alpha_0}\) for some \(\alpha_0\), then \(\per(\alpha)=\alpha_0\concat \infty\); 
    if \(X=W_{\alpha_0,\alpha_1}\) for some \(\alpha_0,\alpha_1\), then \(\per(\alpha)=\alpha_1\concat \infty\).
\end{definition}

In other words, if \(\type(\alpha)=X\), then \(\per(\alpha)\) is its ``host'' which is the \(\Permit(X)\)\nbd{}node where we introduced the set $X$.

\begin{definition} \label{def:conflict}
    Let \(\alpha\) be a \(D(W)\)\nbd{}node for some \(W=W_{\alpha_0,\alpha_1}\). 
    An \(R(\alpha_0,\alpha_1,\alpha_2)\)\nbd{}node \(\beta\) with \(\beta\concat (n,W)\subseteq \alpha\) for some \(n\) is said to have \emph{conflicts with} \(\alpha\) 
    (or equivalently, \(\alpha\) has conflicts with \(\beta\)).

    Let \(\cf(\alpha)\) collects each \(\beta\subseteq \alpha\) that has conflicts with \(\alpha\).
\end{definition}

\begin{remark*} The relation between \(\alpha\) and some $\beta\in \cf(\alpha)$ can be loosely stated as follows.  We would like to diagonalize $\Delta$ at $\alpha$ with some witness $w$ targeting $W$.  
But we will see that this $w$ must be paired with some other witness $u$ or $v$ targeting $U$ or $V$ respectively.  Whether $w$ is chosen (discarded) or one of $u,v$ is chosen (discarded) will depend on what happens at those $\beta\in \cf(\alpha)$.  Note that \(\cf(\alpha)\) can be empty.  
\end{remark*}

Now we discuss some structural properties of the priority tree \(\cT\). By~\ref{it:pt C}, we have the following
\begin{lemma}\label{lem:S locate C}
    Let \(\alpha\) be an \(S(\alpha_0,\alpha_1,\alpha_2)\)\nbd{}node, then there is a unique \(C\)\nbd{}node \(\xi\) such that \(\xi\concat\omega\subseteq \alpha\). 
    Moreover, \(\xi\) is a \(C(\alpha_0,\alpha_3,\alpha_4)\)\nbd{}node for some \(\alpha_3\) and \(\alpha_4\). \qed{}
\end{lemma}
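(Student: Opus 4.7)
The plan is to split the claim into three pieces, namely existence of some $\xi$, its uniqueness, and the matching of the first parameter, since the statement is entirely combinatorial about the tree $\cT$.

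First I would prove existence by looking at how $\alpha$ was introduced on the tree. Either $\alpha=\xi\concat\omega$ for some $C$-node $\xi$ via~\ref{it:pt C}, and we are done; or $\alpha=\alpha^-\concat\infty$ for some $D(W)$-node $\alpha^-$ via~\ref{it:pt D-CS}, in which case $\alpha^-$ must lie in a $\Pi_3$-world, so by the definition of ``$\Pi_3$-world'' some $C$-node $\xi$ satisfies $\xi\concat\omega\subseteq\alpha^-\subseteq\alpha$. Next I would handle uniqueness by contradiction: if two distinct $C$-nodes $\xi_1\subsetneq\xi_2$ both satisfied $\xi_i\concat\omega\subseteq\alpha$, a length comparison of the prefixes of $\alpha$ forces $\xi_1\concat\omega\subseteq\xi_2$, so $\xi_2$ lies in the $\Pi_3$-world of $\xi_1$. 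But~\ref{it:pt D-CS} assigns the $\infty$-outcome of a $D(W)$-node inside any $\Pi_3$-world to an $S$-node rather than a $C$-node, so $\xi_2$ cannot have been created as a $C$-node, a contradiction.

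For the matching of the first parameter, write $\xi=C(\alpha_0',\alpha_3,\alpha_4)$; I want to show $\alpha_0'=\alpha_0$. The key observation is that, starting from $\xi\concat\omega$, the tree rules never regenerate a $G(U)$-node. To see this I would traverse the subtree of $\cT$ rooted at $\xi\concat\omega$: this node is an $S(\alpha_0',\alpha_3,\alpha_4)$-node by~\ref{it:pt C}; its successors under~\ref{it:pt S} are $D(V_{\alpha_0'})$-nodes; the $0$- and $d$-successors of those are $G(V_{\alpha_0'})$-nodes by~\ref{it:pt D-Q}; the $\infty$-outcomes generate $\Permit(W_{\alpha_0',\cdot})$-nodes by~\ref{it:pt GV}; and repeated applications of~\ref{it:pt GW},~\ref{it:pt R-}, and~\ref{it:pt D-CS} yield only $D(W_{\alpha_0',\cdot})$-, $R^-(\alpha_0',\cdot,\cdot)$-, $S(\alpha_0',\cdot,\cdot)$-, and terminal $Q$-nodes. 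In particular every $S$-node extending $\xi\concat\omega$ has first parameter $\alpha_0'$, so $\alpha_0=\alpha_0'$.

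The only obstacle worth mentioning is the bookkeeping in this last paragraph: one must confirm that the subtree rooted at $\xi\concat\omega$ cannot re-enter a rule producing a fresh $G(U)$-node. Everything else follows immediately from~\ref{it:pt C} and~\ref{it:pt D-CS}, together with the fact (noted in the remark after~\ref{it:pt C}) that $\Pi_3$-worlds are not nested.
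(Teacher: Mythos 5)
Your proposal is correct and essentially spells out the argument that the paper leaves implicit: the paper marks Lemma~\ref{lem:S locate C} with \(\qed\) in place and only gestures at it with ``By~\ref{it:pt C}, we have the following,'' so no explicit proof is given there. Your three-part decomposition (existence from the two ways an \(S\)-node can arise via~\ref{it:pt C} or~\ref{it:pt D-CS}; uniqueness from the fact that~\ref{it:pt D-CS} never produces a \(C\)-node inside a \(\Pi_3\)-world, i.e.\ no nested \(\Pi_3\)-worlds; parameter-matching by tracing the closed collection of rules~\ref{it:pt S}, \ref{it:pt D-Q}, \ref{it:pt GV}, \ref{it:pt GW}, \ref{it:pt R-}, \ref{it:pt D-CS} active below \(\xi\concat\omega\), which never reintroduce a \(U\)-type or fresh \(\Permit\) node and hence preserve the first subscript \(\alpha_0'\)) is exactly the bookkeeping the paper expects the reader to perform, and it is carried out correctly.
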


\begin{lemma}\label{lem:Q locate C}
    Let \(\alpha\) be a \(Q(\eV,\eW)\)\nbd{}node. Then \(\eW\neq \varnothing\) if and only if there exists a unique \(C(\alpha_0,\alpha_1,\alpha_2)\)\nbd{}node \(\xi\) for some \(\alpha_0\), \(\alpha_1\), 
    and \(\alpha_2\) such that \(\alpha\in \cT[\xi,\Pi_3]\).
\end{lemma}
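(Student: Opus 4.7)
The plan is to unpack the two definitions involved and then match them up using Lemma~\ref{lem:S locate C}. By~\ref{it:pt D-Q}, a \(Q(\eV,\eW)\)\nbd{}node \(\alpha\) arises as \(\alpha=(\alpha^-)\concat \infty\) where \(\alpha^-\) is a \(D_e(V_{\alpha_0})\)\nbd{}node; by~\ref{it:pt Q}, \(\eW\) is exactly the (finite) set of \(\alpha_2\)'s obtained from every \(S(\alpha_0',\alpha_1',\alpha_2')\)\nbd{}node \(\beta\subseteq \alpha\). Thus the statement ``\(\eW\neq \varnothing\)'' is literally equivalent to ``some \(S\)\nbd{}node appears as an initial segment of \(\alpha\)''. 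Our job is to translate this into the existence (and uniqueness) of a \(C\)\nbd{}node whose \(\omega\)\nbd{}branch is taken by \(\alpha\).

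For the easy direction (\(\Leftarrow\)): assume there is a \(C(\alpha_0,\alpha_1,\alpha_2)\)\nbd{}node \(\xi\) with \(\xi\concat \omega\subseteq \alpha\). By~\ref{it:pt C}, \(\xi\concat\omega\) is itself an \(S(\alpha_0,\alpha_1,\alpha_2)\)\nbd{}node sitting on the path to \(\alpha\), so~\ref{it:pt Q} forces \(\alpha_2\in \eW\), giving \(\eW\neq \varnothing\). For the other direction (\(\Rightarrow\)): if \(\eW\neq \varnothing\), then by~\ref{it:pt Q} there is some \(S\)\nbd{}node \(\beta\subseteq \alpha\); Lemma~\ref{lem:S locate C} produces the desired \(C\)\nbd{}node \(\xi\) with \(\xi\concat\omega\subseteq \beta\subseteq \alpha\), so \(\alpha\in \cT[\xi,\Pi_3]\).

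The only substantive step is uniqueness, which is where I expect the main (if mild) obstacle: one has to invoke the ``no nested \(\Pi_3\)\nbd{}worlds'' feature remarked after~\ref{it:pt C}. Suppose two distinct \(C\)\nbd{}nodes \(\xi,\xi'\) both satisfy \(\xi\concat\omega\subseteq \alpha\) and \(\xi'\concat\omega\subseteq \alpha\). Since their initial segments of \(\alpha\) are linearly ordered, WLOG \(\xi\subsetneq \xi'\). Because the edge leaving \(\xi\) on the path to \(\alpha\) is labelled \(\omega\), we obtain \(\xi\concat\omega\subseteq \xi'\), so the predecessor \(\xi'^{\,-}\) of \(\xi'\) also extends \(\xi\concat\omega\) and hence lies in the \(\Pi_3\)\nbd{}world of \(\xi\). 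But \(\xi'^{\,-}\) is a \(G(W_{\_,\_})\)\nbd{}node that was required to be in the \(\Sigma_3\)\nbd{}world in order for \(\xi'=\xi'^{\,-}\concat\infty\) to be declared a \(C\)\nbd{}node by~\ref{it:pt GW}. This contradiction establishes uniqueness, completing the proof.
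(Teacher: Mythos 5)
Your proof follows the same route as the paper's: the ($\Leftarrow$) direction uses that $\xi\concat\omega$ is an $S$-node forcing $\alpha_2\in\eW$, and the ($\Rightarrow$) direction extracts an $S$-node from $\eW\neq\varnothing$ via~\ref{it:pt Q} and then applies Lemma~\ref{lem:S locate C}; the paper merely says uniqueness is ``clear'' while you spell it out, which is a reasonable thing to do. One small slip in the uniqueness argument: the predecessor $\xi'^{\,-}$ of a $C$-node is a $D(W_{\_,\_})$-node, not a $G(W_{\_,\_})$-node, and the relevant clause is~\ref{it:pt D-CS} (which declares that a $D(W)$-node in the $\Sigma_3$-world gets a $C$-node as its $\infty$-successor), not~\ref{it:pt GW} (which sends a $G(W)$-node's $\infty$-outcome to an $R$- or $R^-$-node). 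With that correction the contradiction you draw --- $\xi'^{\,-}$ would have to be simultaneously in the $\Sigma_3$-world and in $\cT[\xi,\Pi_3]$ --- stands, and the argument is complete.
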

\begin{proof}
    \((\Leftarrow)\) Note that \(\xi\concat \omega\) is an \(S(\alpha_0,\alpha_1,\alpha_2)\)\nbd{}node and hence \(\alpha_2\in \eW\). \((\Rightarrow)\) Uniqueness is clear from the definition of priority tree as we do not have nested \(\Pi_3\)\nbd{}worlds. By~\ref{it:pt Q} and \(\eW\neq \varnothing\), there is an \(S\)\nbd{}node \(\sigma\subseteq\alpha\). Then we apply Lemma~\ref{lem:S locate C} to get the \(C\)\nbd{}node.
\end{proof}

This lemma indicates where the non-uniformity comes in: those \(Q(\eV,\eW)\)\nbd{}nodes inside a \(\Pi_3\)\nbd{}world have a choice to make when comes to diagonalization, whereas the \(Q(\eV,\varnothing)\) nodes outside \(\Pi_3\)\nbd{}worlds have no choice but to enumerate elements into \(V\). 

The following two lemmas are clear from the definition of \(\cT\).
\begin{lemma}
    Let \(\alpha\) be a \(D(X)\)\nbd{}node. Then for each \(\beta\) with \(\alpha\concat \infty \subseteq \beta\), \(\type(\beta)\neq X\). \qed{}
\end{lemma}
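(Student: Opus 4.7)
The plan is to split on the type $X$. When $X = U$ or $X = V_{\alpha_0}$, by rules~\ref{it:pt D-P} and~\ref{it:pt D-Q} the node $\alpha\concat\infty$ is a $P$- or $Q$-node, hence terminal by~\ref{it:pt P} and~\ref{it:pt Q}; moreover its label is not of the form $\Permit$/$G$/$D$, so $\type(\alpha\concat\infty)$ is not $X$, and since there are no proper extensions the claim is immediate. The substantial case is $X = W_{\alpha_0,\alpha_1}$, where by~\ref{it:pt D-CS} the node $\alpha\concat\infty$ is a $C$- or $S$-node.

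For this case I would argue by minimal counterexample. Suppose toward contradiction that some $\beta \supseteq \alpha\concat\infty$ satisfies $\type(\beta) = W_{\alpha_0,\alpha_1}$; since $\alpha\concat\infty$ is itself a $C$/$S$-node (not of type $W$), we may take such $\beta$ of shortest length and conclude $\beta \supsetneq \alpha\concat\infty$. Also $\beta \neq \per(W_{\alpha_0,\alpha_1}) = \alpha_1\concat\infty$, because $\alpha_1\concat\infty \subseteq \alpha \subsetneq \alpha\concat\infty \subsetneq \beta$ would otherwise force $\beta$ to be both a prefix and a proper extension of itself.

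The key observation, by direct inspection of~\ref{it:pt GW},~\ref{it:pt D-CS},~\ref{it:pt R} and~\ref{it:pt R-}, is that every type-$W_{\alpha_0,\alpha_1}$ node other than $\per(W_{\alpha_0,\alpha_1})$ admits a strictly shorter type-$W_{\alpha_0,\alpha_1}$ prefix $\beta'$: a $G_e(W)$-node extends either $\Permit(W)$ (if $e=0$) or a $D_{e-1}(W)$-node via outcome $0$ or $d$; a $D_e(W)$-node extends either a $G_e(W)$-node via outcome $0$, or an $R$/$R^-$-node of the form $\alpha_2\concat\infty$ via outcome $(n,W)$, in which case $\alpha_2$ is itself a $G_e(W)$-node. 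So $\beta'$ is obtained by backing up either one step or (in the $R$/$R^-$ case) two steps from $\beta$.

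To finish, I would verify that $\beta'$ still extends $\alpha\concat\infty$, contradicting minimality. Since $\beta'$ and $\alpha\concat\infty$ are both prefixes of $\beta$ they are comparable, and if $\beta' \subsetneq \alpha\concat\infty$, then a length count forces either $\beta = \alpha\concat\infty$ or (only in the $R$/$R^-$ case, since two steps are skipped) $\beta^- = \alpha\concat\infty$. The first is impossible because $\beta$ has type $W$ while $\alpha\concat\infty$ is a $C$/$S$-node; the second would force $\alpha_2 = \alpha$, contradicting that $\alpha_2$ is a $G$-node and $\alpha$ is a $D$-node. The main obstacle is keeping track of the two-step backup in the $R$/$R^-$ case, which requires separate bookkeeping; all remaining sub-cases reduce at once to matching node labels.
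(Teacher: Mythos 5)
Your proof is correct, and since the paper simply asserts this lemma as ``clear from the definition of $\cT$'' with no argument given, there is nothing to compare against; your write-up is a sound and complete verification of what the paper takes to be obvious.

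The structure is right: the $U$ and $V$ cases are trivial because $\alpha\concat\infty$ is a terminal $P$- or $Q$-node, and the $W$ case reduces to the observation that every $W_{\alpha_0,\alpha_1}$-typed node other than $\per=\alpha_1\concat\infty$ has a $W_{\alpha_0,\alpha_1}$-typed prefix one step above (via~\ref{it:pt GW},~\ref{it:pt D-CS} outcome $0$/$d$) or two steps above (via~\ref{it:pt R}/\ref{it:pt R-}, since the $R$- or $R^-$-node is $\alpha_2\concat\infty$ with $\alpha_2$ a $G(W)$-node). One small streamlining: since $\alpha\concat\infty$ is a $C$- or $S$-node and hence has no type, the minimal counterexample $\beta$ automatically satisfies $\beta\supsetneq\alpha\concat\infty$, so the length count only needs to rule out $\alpha\concat\infty=\beta^-$ in the two-step ($R/R^-$) sub-case; the $\beta=\alpha\concat\infty$ branch you mention never arises. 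Your disposal of that remaining sub-case, via $\alpha=\alpha_2$ forcing a $D$-node to equal a $G$-node, is exactly the right contradiction.
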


It says that if we have not satisfied \(D(X)\) yet, we stop working on other requirements for \(X\).


\begin{lemma}
    Let \(\alpha\) be a \(P(\eU,\eW)\)\nbd{}node. For \(\beta\in \eW\), \(\beta\mapsto \type(\beta)\) is injective. \qed{}
\end{lemma}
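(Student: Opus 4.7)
Suppose for contradiction that there exist distinct $\beta, \beta' \in \eW$ with $\type(\beta) = \type(\beta')$. By~\ref{it:pt P}, every element of $\eW$ is the third parameter $\alpha_2$ of some $C(\alpha_0,\alpha_1,\alpha_2)$\nbd{}node below $\alpha$; let $\xi$ and $\xi'$ be the $C$\nbd{}nodes corresponding to $\beta$ and $\beta'$ respectively, so $\xi^- = \beta$, $(\xi')^- = \beta'$, both $\xi,\xi'\subseteq \alpha$, and $\xi = C(\alpha_0,\alpha_1,\beta)$, $\xi' = C(\alpha_0',\alpha_1',\beta')$. Since $\beta$ is a $D$\nbd{}node for $W_{\alpha_0,\alpha_1}$ and $\beta'$ is a $D$\nbd{}node for $W_{\alpha_0',\alpha_1'}$, the assumption $\type(\beta) = \type(\beta')$ forces $(\alpha_0,\alpha_1) = (\alpha_0',\alpha_1')$. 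As $\xi$ and $\xi'$ both lie below $\alpha$ they are comparable; without loss of generality $\xi \subsetneq \xi'$, so $\beta' \supsetneq \xi$.

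The heart of the argument is to prove the following claim: no node strictly extending $\xi$ can have type $W_{\alpha_0,\alpha_1}$. Granting the claim, $\beta'$ cannot be a $D(W_{\alpha_0,\alpha_1})$\nbd{}node, contradiction. To verify the claim I trace the two families of extensions of $\xi$ using the assignment rules. By~\ref{it:pt C}, the finite outcomes $\xi\concat n$ are $D(U)$\nbd{}nodes; their subtrees (governed by~\ref{it:pt D-P} and~\ref{it:pt GU}) either continue working on $U$'s requirements or, at some $G(U)\concat\infty$ strictly above $\xi$, spawn a fresh $\Permit(V_\sigma)$\nbd{}node with $\sigma\supsetneq \xi$. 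A subsequent $\Permit(W_{\sigma,\tau})$\nbd{}node introduced in the $V_\sigma$\nbd{}block must then have $\sigma\supsetneq \xi\supseteq \alpha_0$, ruling out $(\sigma,\tau)=(\alpha_0,\alpha_1)$. The $\omega$\nbd{}outcome $\xi\concat\omega$ is an $S(\alpha_0,\alpha_1,\beta)$\nbd{}node in the $\Pi_3$\nbd{}world of $\xi$; by~\ref{it:pt S}, \ref{it:pt D-Q}, and~\ref{it:pt GV}, its subtree either continues with $V_{\alpha_0}$'s requirements or opens a $\Permit(W_{\alpha_0,\tau})$\nbd{}block where $\tau$ is a $G(V_{\alpha_0})$\nbd{}node strictly extending $\xi$, so $\tau\neq\alpha_1$ and again $W_{\alpha_0,\tau}\neq W_{\alpha_0,\alpha_1}$. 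Since by~\ref{it:pt C} nested $\Pi_3$\nbd{}worlds do not occur, these two cases are exhaustive.

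The only point requiring care is the bookkeeping that each candidate $X$ is born at a unique $\Permit(X)$\nbd{}node whose subscripts are strictly shorter nodes of $\cT$, so that the $W_{\alpha_0,\alpha_1}$\nbd{}block along any path ends precisely at the first $\infty$\nbd{}outcome of a $D(W_{\alpha_0,\alpha_1})$\nbd{}node, namely at $\xi$. This is immediate from the recursive construction of $\cT$ and is the main (but routine) step to check; the injectivity of $\beta\mapsto \type(\beta)$ on $\eW$ then follows.
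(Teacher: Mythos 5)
Your argument is correct, and since the paper gives no proof at all (the lemma is stated with an immediate \verb|\qed{}|, preceded by the remark that it is ``clear from the definition of $\cT$''), what you have done is supply the details that the authors left implicit. One remark worth making: the bulk of your write-up --- the ``claim'' that no node properly extending $\xi = C(\alpha_0,\alpha_1,\beta)$ can have type $W_{\alpha_0,\alpha_1}$, verified by tracing the $\Sigma_3$- and $\Pi_3$-branches of $\xi$ --- is essentially a reproof of the lemma stated immediately before this one, namely that for a $D(X)$-node $\sigma$, no $\tau\supseteq\sigma\concat\infty$ has $\type(\tau)=X$. If you instead invoke that lemma directly, the argument collapses to two lines: from $\xi\subsetneq\xi'$ and $\xi=\beta\concat\infty$, $(\xi')^-=\beta'$ you get $\beta\concat\infty\subseteq\beta'$, so $\type(\beta')\neq\type(\beta)$, a contradiction. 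So your proof is sound and in the spirit of the paper, but you could shorten it considerably by using the adjacent lemma rather than re-deriving the block structure of $\cT$ from the assignment rules. A very minor point: since $\alpha$ is a $P$-node and $\xi,\xi'$ are $C$-nodes, you actually have $\xi,\xi'\subsetneq\alpha$ (strict), though writing $\subseteq$ is harmless.
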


We now postulate the global (or static) priority $\prec$ between the nodes on the priority tree. We use \(\alpha\prec \beta\) to denote that \(\alpha\) has higher global priority than \(\beta\). 
We mostly follow the left-to-right order on the priority tree. 
\begin{definition} [global priority]\label{def:global priority}
    Let \(\alpha,\beta\in \cT\) be such that there exists some \(\xi\) such that \(\xi\concat o_0 \subseteq \alpha\), \(\xi\concat o_1\subseteq \beta\) and \(o_0 <_L o_1\).
    \begin{enumerate}
        \item\label{it:prio it1} If \(\xi\) is not a \(C\)\nbd{}node, then we define \(\alpha \prec \beta\).
        \item\label{it:prio it2} If \(\xi\) is a \(C\)\nbd{}node but \(o_1\) is not \(\omega\), then we define \(\alpha \prec \beta\).
    \end{enumerate}
    If \(\alpha\prec\beta\) or \(\beta\prec\alpha\), we say \(\alpha\) and \(\beta\) are \(\prec\)\nbd{}comparable. 
\end{definition}

If \(\alpha\subsetneq \beta\), we do not think that \(\alpha\) has higher priority than \(\beta\).
Global priorities are similar to the usual priorities because they depend on the positions of the nodes on the tree. However,
if \(\xi\) is a \(C\)\nbd{}node and \(o_1\) is \(\omega\), then we do \emph{not} define \(\prec\) between \(\alpha\) and \(\beta\); a ``dynamic'' and ``local'' priority \(\prec_{\xi,s}\) between these two will be defined in Definition~\ref{def:local priority}. We call it ``dynamic'' because it depends on the pairs (which will be introduced in Subsection~\ref{sec:Q node}), which are not permanent and can be canceled. Moreover, even if \(\alpha\) is paired with \(\beta\), the priority between them can only be determined based on what happens on current stage.

The following lemma will be used in Lemma~\ref{lem:R con}.
\begin{lemma}\label{lem:comparable nodes}
    \begin{enumerate}
        \item If \(\alpha\) and \(\beta\) are \(P(\eU,\eW)\)- and \(P(\eU',\eW')\)\nbd{}nodes respectively, then they are \(\prec\)\nbd{}comparable.
        \item If \(\alpha\) and \(\beta\) are \(Q(\{\sigma\},\eW)\)- and \(Q(\{\tau\},\eW')\)\nbd{}nodes, respectively, with \(\type(\sigma) = \type(\tau)\), then they are \(\prec\)\nbd{}comparable.
    \end{enumerate}   
\end{lemma}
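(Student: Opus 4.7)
The plan is to rule out the single configuration in which $\prec$ fails to relate $\alpha$ and $\beta$, namely when they split at a $C$-node $\xi$ with the right-side outcome equal to $\omega$. Both $P$- and $Q$-nodes are terminal, so distinct $\alpha, \beta$ are incomparable as strings and split at a unique longest common prefix $\xi$ with outcomes $o_\alpha \neq o_\beta$; reorder so that $o_\alpha <_L o_\beta$. By Definition~\ref{def:global priority}, $\alpha \prec \beta$ holds unless $\xi$ is a $C$-node with $o_\beta = \omega$, in which case $\beta \in \cT[\xi, \Pi_3]$.

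For part (1), inspection of the tree-construction clauses shows that $D(U)$-nodes are introduced only via (T2.1), (T4.1), or (T5), and in every instance these live in the $\Sigma_3$-world. Since by (T3.1) a $P$-node is the $\infty$-outcome of some $D(U)$-node, no $P$-node can lie in any $\Pi_3$-world, which immediately excludes the bad configuration.

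For part (2), because $\type(\sigma) = \type(\tau) = V_{\alpha_0}$, both $\sigma$ and $\tau$ extend $\per(\sigma) = \alpha_0\concat\infty$, forcing $\alpha_0\concat\infty \subsetneq \xi$. Writing $\xi = C(\eta_0,\eta_1,\eta_2)$, the entry point $\xi\concat\omega$ of the $\Pi_3$-world is an $S(\eta_0,\eta_1,\eta_2)$-node that works on $V_{\eta_0}$; moreover, because a $\Pi_3$-world contains no $G(U)$-node and hence no new $\Permit(V_\bullet)$-node, a quick induction through the T-clauses shows that every $V$-type node in $\cT[\xi,\Pi_3]$ has type $V_{\eta_0}$. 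Applied to $\tau$ this yields $\alpha_0 = \eta_0$, so $\sigma$ is a $V_{\eta_0}$-type $D$-node properly extending $\xi\concat n$ for some $n\in\omega$; deriving a contradiction from this is the task that remains.

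The main obstacle is showing that no $V_{\eta_0}$-type node can extend $\xi\concat n$. I would pick a shallowest such node $\mu$ and examine the possibilities for $\mu^-$ dictated by the T-clauses. Minimality rules out $\mu^-$ being of type $V_{\eta_0}$, leaving three cases: (a) $\mu^- = \Permit(V_{\eta_0}) = \eta_0\concat\infty$, which sits above $\xi\concat n$, so $\mu$ could not in fact extend $\xi\concat n$; (b) $\mu^-$ is an $R^-(\eta_0,\cdot,\cdot)$- or $S(\eta_0,\cdot,\cdot)$-node, excluded because these live only in $\Pi_3$-worlds; or (c) $\mu^- = R(\eta_0,\alpha_1',\alpha_2')$ for some $\alpha_2'$ of type $W_{\eta_0,\alpha_1'}$ that itself extends $\xi\concat n \supseteq \xi^-\concat\infty$. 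In case~(c), if $\alpha_1' = \eta_1$, then the lemma forbidding $\type(\beta) = X$ for $\beta \supseteq D(X)\concat\infty$ (applied to the $D(W_{\eta_0,\eta_1})$-node $\xi^-$) contradicts $\alpha_2'$ being $W_{\eta_0,\eta_1}$-type beyond $\xi^-\concat\infty$; if $\alpha_1' \neq \eta_1$, then $\alpha_1'$ must be a $G(V_{\eta_0})$-node on the path to $\xi$ strictly above $\eta_1$, and along that path we go through $\alpha_1'\concat 0$ rather than $\alpha_1'\concat\infty$ on the way to $\eta_1\concat\infty$, so $\alpha_1'\concat\infty$ is incomparable with $\xi\concat n$ and admits no common descendant $\alpha_2'$. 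All three cases yield contradictions, finishing the argument.
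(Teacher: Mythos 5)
The paper's own proof of part (2) is a two-line assertion: after reducing to the configuration with a $C(\alpha_0,\alpha_1,\alpha_2)$-node $\xi$ satisfying $\xi\concat n\subseteq\alpha$, $\xi\concat\omega\subseteq\beta$, it says "it is clear then $\type(\tau)=V_{\alpha_0}\neq\type(\sigma)$." Your framework is the same: show $\type(\tau)=V_{\alpha_0}$ but $\type(\sigma)\neq V_{\alpha_0}$. Your treatment of part (1) is fine, and your claim that every $V$-type node in $\cT[\xi,\Pi_3]$ has type $V_{\alpha_0}$ is correct (though the "quick induction" implicitly needs the same "once we leave a $W_{\alpha_0,\alpha_1'}$-thread we never re-enter it" argument that you would also need later). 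However, your minimal-counterexample argument for $\type(\sigma)\neq V_{\alpha_0}$ has real gaps in cases (b) and (c).

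In case (b) you exclude $\mu^-$ being an $R^-(\eta_0,\cdot,\cdot)$- or $S(\eta_0,\cdot,\cdot)$-node "because these live only in $\Pi_3$-worlds." That is not a valid exclusion: $\cT[\xi,\Sigma_3]$ certainly contains $\Pi_3$-worlds, namely $\cT[\xi',\Pi_3]$ for any $C$-node $\xi'$ with $\xi'\supseteq\xi\concat n$, and by Lemma~\ref{lem:S locate C} such a $\xi'$ could in principle be of the form $C(\eta_0,\alpha_3,\alpha_4)$. So case (b) is not vacuous; rather, unwinding it leads to the same situation as case (c) (an $\alpha_3$ or $\alpha_1'$ that is a $G(V_{\eta_0})$-node with $\infty$-outcome above $\xi$). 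In case (c) your argument for the sub-case $\alpha_1'\neq\eta_1$ is circular: you assert without justification that $\alpha_1'$ sits "strictly above $\eta_1$" and that "along that path we go through $\alpha_1'\concat 0$ rather than $\alpha_1'\concat\infty$." The latter claim can fail --- the path from $\alpha_0\concat\infty$ to $\eta_1$ may indeed pass through $\alpha_1'\concat\infty=\Permit(W_{\eta_0,\alpha_1'})$ and then divert back into $V_{\eta_0}$-territory via an $R(\eta_0,\alpha_1',\cdot)\concat(m,V)$-outcome. What actually kills case (c) is a thread-survival argument: once the path from $\alpha_1'\concat\infty$ leaves the $W_{\eta_0,\alpha_1'}$-thread (as it must to reach $\eta_1$ or $\xi^-$, neither of which is $W_{\eta_0,\alpha_1'}$-type), it can never re-enter, because every way to produce a $W_{\eta_0,\alpha_1'}$-type node below a given point requires an existing $W_{\eta_0,\alpha_1'}$-type node below that point (check (T1), (T2.3), (T3.3), (T4.1), (T4.2)); hence a $G(W_{\eta_0,\alpha_1'})$-node $\alpha_2'\supseteq\xi\concat n$ cannot exist. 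You should supply this observation; as written, the key step of case (c) is an unsupported assertion that happens to have a false intermediate claim.
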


\begin{proof}
(1) holds as a \(P\)\nbd{}node never belongs to a \(\Pi_3\)\nbd{}world. 

For (2), we suppose toward a contradiction that there exists a \(C(\alpha_0,\alpha_1,\alpha_2)\)\nbd{}node \(\xi\) such that \(\xi\concat n\subseteq \alpha\) and \(\xi\concat \omega \subseteq \beta\). It is clear then \(\type(\tau)=V_{\alpha_0}\neq \type(\sigma)\).

\end{proof}

\section{Parameters}\label{sec:parameter}
Unlike a usual priority argument, where we might have static parameters assigned to the edges of the priority tree, our construction has dynamic parameters; each node has to update them at the end of each stage. Two kinds of the parameters are \emph{edge parameters} and \emph{pairing parameters} and will be discussed first in this section.

We assume that the readers are familiar with usual tree constructions. We now describe how a given node \(\alpha\) maintains its parameters at the end of stage~\(s\).



\medskip\noindent
\(R(\alpha_0,\alpha_1,\alpha_2)\)\nbd{}node:
Suppose \(V=V_{\alpha_0}\) and \(W=W_{\alpha_0,\alpha_1}\). 
For each \(n\) and for each \(X\in \{U,V,W\}\), the \emph{edge parameter} \(z_{(n,X)}\) is initially set to be \(n\). 
Let \((n,X)\) be the outcome such that \(\alpha\concat (n,X)\) is visited at \(s\), if any.
\begin{enumerate}
    \item If \(X=W\), we update \(z_{(n,U)}=z_{(n,V)} = s\) and \(z_{(n+j,Y)}=s+j\) for each \(j\ge 1\) and each \(Y\in \{U,V,W\}\).
    \item If \(X=U\) or \(X=V\), we do nothing.
\end{enumerate}
If \(\alpha\) is initialized at stage~\(s\), we set \(z_{(n,X)}=n\) for each \(n\) and each \(X\in \{U,V,W\}\).

\begin{remark*}
    If \(z_{(n,U)}\) is stable (i.e., \(\lim_s z_{(n,U)}[s]<\infty\)), 
    then the divergent outcome \((n,U)\) indicates that there is some \(m\le z_{(n,U)}\) such that \(\Gamma_{\alpha_0}(BU;m)\uparrow\).
    Other divergent outcomes are similar.
\end{remark*}

\medskip\noindent
\(S(\alpha_0,\alpha_1,\alpha_2)\)\nbd{}node:
Suppose \(V=V_{\alpha_0}\) and \(W=W_{\alpha_0,\alpha_1}\). For each \(n\), the \emph{edge parameter} \(y_{(n,V)}\) is initially set to be \(n\). 
Now at stage~\(s\), 
let \(n\) be such that \(\alpha\concat (n,V)\) is visited at stage~\(s\).  We update \(y_{(n+j,V)}=s+j\) for each \(j\ge 1\).
If \(\alpha\) is initialized at stage~\(s\), we set \(y_{(n,V)}=n\).

\medskip\noindent
\(C(\alpha_0,\alpha_1,\alpha_2)\)\nbd{}node: For each \(n\), the \emph{edge parameter} \(x_{(n,U)}\) is initially set to be \(n\).
Now at stage~\(s\), let \(n\) be such that \(\alpha\concat (n,U)\) is visited at stage~\(s\), we update \(x_{(n+j,U)}=s+j\) for each \(j\ge 1\). If \(\alpha\) is initialized at stage~\(s\), we set \(x_{(n,U)}=n\).

\medskip
The above covers the nodes where edge parameters are needed. The next one concerns with pairing parameters.

\medskip\noindent
\(Q(\eV,\eW)\)\nbd{}node:
If $\eW = \varnothing$, then it needs no pairing parameters. Now we assume that \(\eW\neq \varnothing\). 
By Lemma~\ref{lem:Q locate C}, we let \(\xi\) be the (unique) \(C(\alpha_0,\alpha_1,\alpha_2)\)\nbd{}node with \(\xi\concat \omega \subseteq \alpha\).
The \emph{pairing parameter} \(\tp(\alpha)\) is defined to be the least number \(q\) such that \(q > y_{(n,V)}\) for each \(S\)\nbd{}node \(\beta\) and each \(n\) with \(\xi\concat \omega\subseteq \beta\concat (n,V) \subseteq \alpha\) (we assume that \(y_{(n,V)}\) has been updated because \(\beta\subseteq \alpha\)). 

\medskip
\(P(\eU,\eW)\)\nbd{}nodes however need no pairing parameters. 
Other kinds of parameters are discussed below.

\medskip\noindent
\(D(X)\)\nbd{}node: \(d\)\nbd{}outcome of \(\alpha\) can be \emph{activated} at some stage. Once it becomes activated, it remains activated forever (even if the node is initialized).

\medskip

In our construction, an \(R\)- or \(R^-\)\nbd{}node builds a functional \(\Phi\), an \(S\)\nbd{}node builds a functional \(\Psi\), a \(P\)\nbd{}node builds a functional \(\Theta\), and a \(Q\)\nbd{}node builds a functional \(\Theta\) (the symbol is reused). These functionals are built locally and hence will be referred as \emph{local parameters}. Once a node is initialized or disarmed, its local parameters are discarded immediately. 

A \(P(\eU,\eW)\)- or \(Q(\eV,\eW)\)\nbd{}node \(\alpha\) has to maintain a function \(\dw_\alpha\) (standing for \emph{d}iagonalizing \emph{w}itness). This \(\dw_\alpha\) is also referred as a \emph{local parameter}.
A \(P\)\nbd{}node (or \(Q\)\nbd{}node, respectively) is where we attack the recursiveness of \(U\) (or \(V\), respectively) or \(W\) by enumerating a diagoalizing witness into one of them. The whole attacking process consists of four phases and \(\dw_\alpha\) is defined in the first phase:

\begin{enumerate}[fullwidth,leftmargin=*,itemindent=3em,label=(Phase\arabic*)]
    \item\label{it:ph1} Prepare the diagonalizing witnesses (Subsections~\ref{sec:P node} and~\ref{sec:Q node}).
    Without loss of generality, we assume that \(\alpha\) is a \(P(\eU,\eW)\)\nbd{}node. \(\alpha\) has to prepare many diagonalizing witnesses for each \(\sigma\in \eU\cup\eW\) hoping one of them gets permitted. To ensure that one of them get permitted, \(\alpha\) builds a functional \(\Theta\) so that \(\Theta=A\) in case none of them gets permitted. This would imply that \(A\) is recursive toward the contradiction. The diagonalizing witnesses are organised in the following way: for each \(a\in \omega\), \(\dw_\alpha(a)\) is a map from \(\eU\cup\eW\) to \(\omega\); for each \(\sigma\in \eU\cup\eW\), \(\dw_\alpha(a)(\sigma)\in \omega\) is the diagonalizing witness targeting the set \(\type(\sigma)\). We say that \(x=\dw_\alpha(a)(\sigma)\) is \emph{prepared} if \(\Delta_\sigma(x)=\type(\sigma)(x)=0\). Thus, enumerating \(x\) into \(\type(\sigma)\) leads to the \(\Sigma_1\)\nbd{}outcome of \(\sigma\) (and we activate \(\sigma\concat d\) in this case). Once \(\dw_\alpha(a)(\sigma)\) is prepared for each \(\sigma\in \eU\cup \eW\), we define \(\Theta(a)=A(a)\). 
    
    \item\label{it:ph2} Wait for the permission from \(A\) (Subsection~\ref{sec:permitting center}). 
    Suppose that \(\Theta(a)=A(a)\) has been defined and that \(a\) enters \(A\). Now \(\Theta(a)\) becomes incorrect and we have to choose (see~\ref{it:ph3}) a \(\sigma\in \eU\cup\eW\) and get ready to enumerate \(\dw_\alpha(a)(\sigma)\) into \(\type(\sigma)\). 
    
    \item\label{it:ph3} Make decision (Subsection~\ref{sec:permitting center} and~\ref{sec:R test}). 
    In our construction, there will be nodes setting up restraints (indirectly) on \(\type(\sigma)\) for certain \(\sigma\in \eU\cup \eW\), and our decision has to be made carefully. Once the decision is made, we do not enumerate the point immediately; it will be carried out only when \(\Permit(\type(\sigma))\)\nbd{}node is visited.
    
    \item\label{it:ph4} Enumerate the point (Subsection~\ref{sec:permit node}). 
    Suppose that the chosen permission node is visited and the chosen diagonalizing point \(x=\dw_\alpha(a)(\sigma)\) has not been canceled yet, we simply enumerate it into the \(\type(\sigma)\) and activate \(\sigma\concat d\) at the same moment (and \(\alpha\) will not be visited again).
\end{enumerate}

\section{Strategies and construction}\label{sec:strategies}
When a node \(\alpha\) is visited, particular actions will be taken and will be described formally in the subroutine \(\visit(\alpha)\). In each of the following subsections, it describes either \(\visit(\alpha)\) or other actions.
We will freely use the terms \emph{true outcome} and \emph{true path} in the usual sense in an informal remarks. Though we have a slightly modified definition for true outcome and true path (Definition~\ref{def:T star}), the intuition behind them remains the same.


\begin{definition}\label{def:announces progress}
For \(\alpha\in \cT\), \(\alpha\) \emph{announces progress} if one of the following happens:
\begin{enumerate}
    \item it is \emph{visited};
    \item it is \emph{initialized};
    \item it is \emph{disarmed} (if \(\alpha\) is a \(Q\)\nbd{}node)---see Subsection~\ref{sec:disarm};
    \item it \emph{receives attention} (if \(\alpha\) is a \(P\)- or \(Q\)\nbd{}node)---see Subsection~\ref{sec:permitting center};
    \item it becomes \emph{activated} (if \(\alpha=\beta\concat d\) for some \(D(X)\)\nbd{}node \(\beta\))---see Subsection~\ref{sec:permit node}.
\end{enumerate}    
\end{definition}

\begin{convention}\label{convention: init}
    Whenever \(\alpha\) announces progress, each \(\beta\) of lower global priority (Definition~\ref{def:global priority}) is \emph{initialized} tacitly. If \(\alpha\) is initialized, each \(\beta\) extending \(\alpha\) is also initialized tacitly. In both cases, we say that \(\alpha\) initializes \(\beta\), or \(\beta\) is initialized by \(\alpha\).
\end{convention}

\begin{remark*}
The second sentence in Convention~\ref{convention: init} is to be used with~\ref{it:p2} to initialize nodes below \(\xi\concat n\) as appeared there.
\end{remark*}

``Initialize'' is used in the usual sense:
when a node is initialized, all local parameters are discarded. ``Disarm'' can be thought of as an analogy of ``initialize'' and specially designed for \(Q\)\nbd{}nodes (see Subsection~\ref{sec:Q node}).


\subsection{G-node}\label{sec:G node}
Suppose that \(\alpha\) is a \(G(X)\)\nbd{}node for some \(X\) and the current stage is \(s\).

\medskip
\noindent
\(\visit(\alpha)\):
\begin{enumerate}[nosep]
    \item If \(s\) is an expansionary stage, then \(\visit(\alpha\concat \infty)\).
    \item Otherwise, \(\visit(\alpha\concat 0)\).
\end{enumerate}

\begin{remark*}
The job of \(\alpha\) is to measure the length of agreements between \(\Gamma_{\alpha}(BX)\) and \(A\).
If the true outcome is \(0\)\nbd{}outcome, we end up with \(\Gamma_{\alpha}(BX)\neq A\) and \(G(X)\) is therefore satisfied.
Note that having infinitely many expansionary stages does not guarantee \(\Gamma_{\alpha}(BX)=A\). In fact, there could be some divergent point which will be detected by either \(C\)-, \(R\)- (or \(R^-\)-) or \(S\)\nbd{}nodes, depending on whether \(X\) is of type \(\bU\), \(\bV\) or \(\bW\).

\end{remark*}

\subsection{D-node}\label{sec:D node}
Suppose that \(\alpha\) is a \(D(X)\)\nbd{}node for some \(X\) and the current stage is \(s\).

\medskip\noindent
\(\visit(\alpha)\):
\begin{enumerate}[nosep]
    \item If \(d\)\nbd{}outcome is activated (see Subsection~\ref{sec:permit node}), then \(\visit(\alpha\concat d)\).
    \item If \(s\) is an expansionary stage, then \(\visit(\alpha\concat \infty)\).
    \item Otherwise, \(\visit(\alpha\concat 0)\).
\end{enumerate}

\begin{remark*}
The main job of \(\alpha\) is to measure the length of agreements between \(\Delta_{\alpha}\) and \(X\).
As we shall see later that once \(d\)\nbd{}outcome becomes activated, it remains activated forever.
If \(d\)\nbd{}outcome is the true outcome, then we are given a point \(n\) such that \(\Delta_\alpha(n)=0\neq 1=X(n)\) and \(D(X)\) is therefore satisfied.
If \(0\)\nbd{}outcome is the true outcome, then for some point \(n\), either \(\Delta_\alpha(n)\uparrow\) or \(\Delta_\alpha(n)\downarrow\neq X(n)\), which implies that \(D(X)\) is satisfied.
If \(\infty\)\nbd{}outcome is the true outcome, then \(\Delta_\alpha=X\) and hence \(D(X)\) is not satisfied. In this case, the set \(X\) is not a good one in the sense of Lemma~\ref{lem:full requirement}.
\end{remark*}

\subsection{P-node}\label{sec:P node} 
Suppose that \(\alpha\) is a \(P(\eU,\eW)\)\nbd{}node and \(\alpha\) is being visited at stage~\(s\).
Let \(s^*\le s\) be the least \(\alpha\)\nbd{}stage such that \(\alpha\) never gets initialized between \(s^*\) and \(s\).

\medskip\noindent
\(\visit(\alpha)\): Let \(a\ge s^*\) be the least such that \(\Theta(a)\) is not defined.
\begin{enumerate}[nosep]
    \item If \(\dw_\alpha(a)\) is not defined, we pick a fresh number \(x_\beta\) for each \(\beta\in \eU\cup \eW\) and define \(\dw_\alpha(a)(\beta)=x_\beta\). Stop the current stage.
    \item If \(\dw_\alpha(a)\) is defined but for some \(\beta\in \eU\cup\eW\) we have \(\Delta_{\beta}(x_{\beta})\uparrow\) (i.e., \(x_\beta\) is not prepared), then we stop the current stage.
    \item Otherwise, we define \(\Theta(a)=A(a)[s]\). Stop the current stage.
\end{enumerate}

\begin{remark*} 
Basically, it just implements the~\ref{it:ph1} to prepare the diagonalizing witnesses.  
In (2), it means although we expect infinitely many expansionary stages, yet the length of agreements has not reached $x_{\beta}$. So we just wait. 
As \(P\)\nbd{}node is a terminal node, the true outcome does not matter. 
As we shall see later, if \(\alpha\) is on the true path, then we have \(\Theta(n)=A(n)\) for each \(n\ge s^*\); hence, \(A\) is recursive. 
To achieve this, whenever we have \(\Theta(a)=0\neq 1=A(a)\), we have to prevent \(\alpha\) from being visited again (i.e., switch it off the true path) by diagonalizing against some \(\beta\in \eU\cup\eW\) and activating \(\beta\concat d\). See Subsection~\ref{sec:permitting center} for details.
\end{remark*}

\subsection{Q-node}\label{sec:Q node} 
Suppose that \(\alpha\) is a \(Q(\eV,\eW)\)\nbd{}node and \(\alpha\) is being visited at stage~\(s\).
Let \(s^*\le s\) be the least \(\alpha\)\nbd{}stage such that \(\alpha\) never gets initialized or disarmed (see below) between \(s^*\) and \(s\).

\medskip\noindent
\(\visit(\alpha)\):
\begin{enumerate}[nosep]
    \item\label{it:Q1} Suppose \(\eW\neq \varnothing\) and that \(\alpha\) is not \emph{paired}. 
    We let \(\xi\) be the unique \(C\)\nbd{}node with \(\xi\concat \omega\subseteq \alpha\) (Lemma~\ref{lem:Q locate C}).  
    We set up \((\xi\concat n, \alpha)\) as a \emph{pair} where \(n\) is the least (which always exists by the discussion in Section~\ref{sec:parameter}) such that
    \begin{enumerate} 
        \item 
        \(x_{(n,U)}>\tp(\alpha)\), and
        \item for each \(m\ge n\), \(\xi\concat m\) is not paired.
    \end{enumerate}
    Stop the current stage.
    
    \item\label{it:Q2} Suppose \(\eW=\varnothing\) or that \(\alpha\) has been paired. 
    Let \(a\ge s^*\) be the least such that \(\Theta(a)\) is not defined.
    \begin{enumerate}
        \item Suppose that \(\dw_\alpha(a)\) is not defined, we pick a fresh number \(x_\beta\) for each \(\beta\in \eV\cup \eW\) and define \(\dw_\alpha(a)(\beta)=x_\beta\). Stop the current stage.
        \item Suppose that \(\dw_\alpha(a)\) is defined but for some \(\beta\in \eV\cup\eW\) we have \(\Delta_{\beta}(x_{\beta})\uparrow\). Stop the current stage.
        \item Otherwise, we define \(\Theta(a)=A(a)[s]\). Stop the current stage.
    \end{enumerate}
\end{enumerate}

\begin{remark*}
The action of $Q$ has two parts: In~(\ref{it:Q2}), it prepares the diagonalizing witnesses like the \(P\)\nbd{}node above; in~(\ref{it:Q1}), it establishes a pair.
$\Sigma_3$\nbd{} and $\Pi_3$\nbd{}worlds will interact via such pairs, based on which a ``local'' ``dynamic'' priority will be defined (see Subsection~\ref{sec:C node}, particularly Definition~\ref{def:local priority}). 
Examples of pairs can be found in Figure~\ref{fig:local priority 1}, in which \((C\concat n_0,Q_0)\) and \((C\concat n_1,Q_1)\) are two pairs.
\end{remark*}
\begin{figure}
    \centering
    \includegraphics[width=\textwidth]{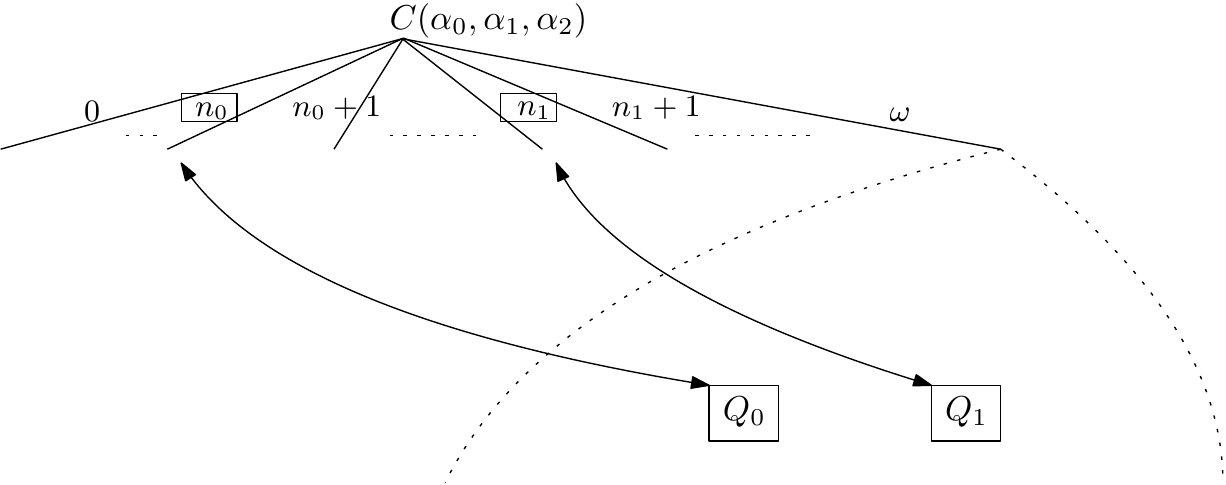}
    \caption{Examples of Pairs}\label{fig:local priority 1}
\end{figure}

\subsection{Pair}\label{sec:a pair}
Suppose that \((\xi\concat n, \alpha)\) is a pair established as in~(\ref{it:Q1}) in Subsection~\ref{sec:Q node}. We implement the following three rules:
\begin{enumerate}[label=(P\arabic*)]
    \item\label{it:p1} We \emph{cancel} the pair if and only if \(\alpha\) is initialized.
    \item\label{it:p2} Whenever the pair is canceled, we initialize $\xi\concat n$ (and all nodes below $\xi\concat n$ by Convention~\ref{convention: init}).
    \item\label{it:p3} Whenever \(\alpha\) announces progress (Definition~\ref{def:announces progress}), we initialize \(\xi\concat (n+1)\).
\end{enumerate}
Note that the pair is \emph{not} canceled when \(\alpha\) is disarmed (Subsection~\ref{sec:disarm}). 
A case of special importance shall be noted:
\begin{lemma}\label{lem:cleared of P}
    Let \((\xi\concat n,\alpha)\) be a pair. Whenever \((\alpha^-)\concat d\) is activated, \(\xi\concat n\) is initialized.
\end{lemma}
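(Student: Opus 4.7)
The plan is to chain Convention~\ref{convention: init} with pair rules~\ref{it:p1} and~\ref{it:p2}. First I would identify $\alpha^-$ precisely: since $(\xi\concat n,\alpha)$ was established as a pair, $\alpha$ must be a $Q(\eV,\eW)$\nbd{}node with $\eW\neq \varnothing$ (step~(\ref{it:Q1}) of $\visit$ in Subsection~\ref{sec:Q node}), so by~\ref{it:pt D-Q} the predecessor $\alpha^-$ is a $D_e(V)$\nbd{}node and $\alpha=(\alpha^-)\concat \infty$. Crucially $\alpha^-$ is not a $C$\nbd{}node, so the comparison of the two siblings $(\alpha^-)\concat d$ and $(\alpha^-)\concat \infty=\alpha$ falls under clause~(\ref{it:prio it1}) of Definition~\ref{def:global priority}; since $d<_L \infty$ we obtain $(\alpha^-)\concat d\prec \alpha$ in the global priority.

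Next I would invoke item~(5) of Definition~\ref{def:announces progress}: activation of $(\alpha^-)\concat d$ counts as announcing progress. Convention~\ref{convention: init} then initializes every node of strictly lower global priority, and in particular initializes $\alpha$. Rule~\ref{it:p1} says the pair is canceled exactly when $\alpha$ is initialized, so the pair $(\xi\concat n,\alpha)$ gets canceled at that moment, and rule~\ref{it:p2} then forces $\xi\concat n$ to be initialized---this is the desired conclusion.

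I do not expect a real obstacle; the statement is a clean bookkeeping consequence of how activation, global priority, and pair cancellation have been defined. The one subtlety worth flagging is that $\xi\concat n$ could \emph{not} have been initialized directly from $(\alpha^-)\concat d$ by global priority alone: because $\xi$ is a $C$\nbd{}node, $\xi\concat n$ sits on a finite outcome while $(\alpha^-)\concat d$ extends $\xi\concat \omega$, and Definition~\ref{def:global priority} deliberately leaves these two $\prec$\nbd{}incomparable. The pair mechanism is precisely what relays the initialization across the $\Sigma_3$/$\Pi_3$ boundary, and this lemma records the cleanest instance of that relay.
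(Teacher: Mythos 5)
Your proof is correct and follows exactly the same chain as the paper's one-line argument: activation of $(\alpha^-)\concat d$ announces progress and hence (by global priority and Convention~\ref{convention: init}) initializes $\alpha$, which by~\ref{it:p1} cancels the pair, which by~\ref{it:p2} initializes $\xi\concat n$. You simply spell out the intermediate bookkeeping (identifying $\alpha^-$ as a $D(V)$-node, invoking Definition~\ref{def:global priority}(\ref{it:prio it1}) and Definition~\ref{def:announces progress}(5)) that the paper compresses into ``$(\alpha^-)\concat d$ is activated and therefore $\alpha$ is initialized,'' and your closing remark about why the pair relay is necessary matches the paper's own commentary on this point.
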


\begin{proof}
    Suppose that \((\alpha^-)\concat d\) is activated and therefore \(\alpha\) is initialized. By~\ref{it:p1}, the pair is canceled. By~\ref{it:p2}, \(\xi\concat n\) is initialized.
\end{proof}

\begin{definition}\label{def:pairing priority}
    For two pairs, we say that \((\xi\concat n,\alpha)\) has higher \emph{pairing priority} than \((\xi\concat m,\beta)\) if \(n<m\). (Note that \(n<m\) if and only if \(\alpha\prec\beta\).)
\end{definition}

\subsection{R-node}\label{sec:R node}
Suppose that \(\alpha\) is an \(R(\alpha_0,\alpha_1,\alpha_2)\)\nbd{}node or an \(R^-(\alpha_0,\alpha_1,\alpha_2)\)\nbd{}node and the current stage is \(s\). Let \(s^*<s\) be the last \(\alpha\)\nbd{}stage.

\medskip\noindent
\(\visit(\alpha)\): Let \(n=0\).
\begin{enumerate}[nosep]
    \item Suppose \(n>s\). We stop the current stage.
    \item Suppose \(\Phi(B;z_{(n,W)})\downarrow[s]\). We proceed with \(n+1\).
    \item\label{it:R 3} Suppose \(\Phi(B;z_{(n,W)})\uparrow[s]\) but \(\Phi(B;z_{(n,W)})\downarrow[s^*]\).
    \begin{enumerate}
        \item If \(\gamma_{\alpha_2}(z_{(n,W)})[s]>\gamma_{\alpha_2}(z_{(n,W)})[s^*]\), \(\visit(\alpha\concat (n,W))\).
        \item (Ignored this line if \(\alpha\) is an \(R^-\)\nbd{}node) If (a) does not happen and \(\gamma_{\alpha_0}(z_{(n,U)})[s]>\gamma_{\alpha_0}(z_{(n,U)})[s^*]\), \(\visit(\alpha\concat (n,U))\).
        \item If neither (a) nor (b) happens and \(\gamma_{\alpha_1}(z_{(n,V)})[s]>\gamma_{\alpha_1}(z_{(n,V)})[s^*]\), \(\visit(\alpha\concat (n,V))\).
        \item Otherwise, we define \(\Phi(B;z)=A(z)[s]\)  with use \(\varphi(B;z_{(n,W)})[s^*]\) for each \(z\) with \(z_{(n-1,W)}<z\le z_{(n,W)}\) and then stop the current stage.
    \end{enumerate}
    \item Otherwise, for each \(z\) with \(z_{(n-1,W)}<z\le z_{(n,W)}\), we define \(\Phi(B;z)[s]=A(z)[s]\) with a fresh use \(u\), particularly
    \begin{enumerate}
        \item\label{it:R 4a} \(u\ge \gamma_{\alpha_2}(BW;z_{(n,W)})[s]\),
        \item\label{it:R 4b} (Ignore this line if \(\alpha\) is an \(R^-\)\nbd{}node) \(u\ge \gamma_{\alpha_0}(BU;z_{(n,U)})[s]\), and
        \item\label{it:R 4c} \(u\ge \gamma_{\alpha_1}(BV;z_{(n,V)})[s]\).
    \end{enumerate}
    Then we proceed to \(n+1\).
\end{enumerate}

\begin{remark*}
The $R$\nbd{}node is responsible for defining $\Phi(B; z_{(n,W)})=A(z_{(n,W)})$, whose use $\varphi(z_{(n,W)})$ is set to bound  $\gamma_{\alpha_2}(BW;z_{(n,W)})[s], \gamma_{\alpha_0}(BU;z_{(n,U)})[s]$ and 
$\gamma_{\alpha_1}(BV;z_{(n,V)})[s]$.  When $\Phi(B;z_{(n,W)})$ becomes undefined (as in~(\ref{it:R 3}) above), we track the culprit and
try to demonstrate (in that order) that $\Gamma_{\alpha_2}(BW;z_{(n,W)})$, $\Gamma_{\alpha_0}(BU;z_{(n,U)})$ or
$\Gamma_{\alpha_1}(BV;z_{(n,V)})$ is undefined. 
It is important to note that (\ref{it:R 4b}) and~(\ref{it:R 4c}) are ensured only at the stage when we define \(\Phi(B;z_{(n,W)})\); the two conditions are allowed to be violated at other stages.
The reader shall find details in Subsection~\ref{sec:R strategy}. 
\end{remark*}

\subsection{S-node}\label{sec:S node}
Suppose that \(\alpha\) is an \(S(\alpha_0,\alpha_1,\alpha_2)\)\nbd{}node and the current stage is \(s\). Let \(s^*<s\) be the last \(\alpha\)\nbd{}stage.

\medskip\noindent
\(\visit(\alpha)\): Let \(n=0\).
\begin{enumerate}[nosep]
    \item Suppose \(n>s\). We stop the current stage.
    \item Suppose \(\Psi(B;y_{(n,V)})\downarrow[s]\). We
    proceed with \(n+1\).
    \item Suppose \(\Psi(B;y_{(n,V)})\uparrow[s]\) but \(\Psi(B;y_{(n,V)})\downarrow[s^*]\).
    \begin{enumerate}
        \item If \(\gamma_{\alpha_1}(BV;y_{(n,V)})[s]>\gamma_{\alpha_1}(BV;y_{(n,V)})[s^*]\), then \(\visit(\alpha\concat (n,V))\).
        \item Otherwise, we define \(\Psi(B;y)=A(y)[s]\) with use \(\psi(B;y_{(n,V)})[s^*]\) for each \(y\) with \(y_{(n-1,V)}<y\le y_{(n,V)}\).
        Then we stop the current stage.
    \end{enumerate}
    \item Otherwise, for each \(y\) with \(y_{(n-1,V)}<y\le y_{(n,V)}\), we define \(\Psi(B;y)=A(y)[s]\) with a fresh use \(u\), particularly \(u >  \gamma_{\alpha_1}(BV;y_{(n,V)})[s]\).
    Then we
    proceed to \(n+1\).
\end{enumerate}

\begin{remark*}
The actions of $S$\nbd{}nodes are similar to those of $R$\nbd{}nodes.  We define $\Psi(B;y)=A(y)$ with use $\psi(y)$
bounding $\gamma_{\alpha_1}(BV;y)$. If \((n,V)\) is the true outcome, we can conclude that \(\Psi(B;y_{(n,V)})\uparrow\), which implies \(\Gamma_{\alpha_1}(BV;y_{(n,V)})\uparrow\) and a divergent point for \(\Gamma_{\alpha_1}(BV)\) is found. 
Unexpectedly we might end up with \(\Gamma_{\alpha_1}(BV;y_{(n,V)})\uparrow\) but \(\Psi(B;y_{(n,V)})\downarrow\). This may happen when \(\Gamma_{\alpha_0}(BU;y_{(n,U)})\downarrow\) with \(\gamma_{\alpha_0}(BU;y_{(n,U)})<\psi(B;y_{(n,V)})\) whereas \(\visit(\alpha)\) does not say anything about \(\Gamma_{\alpha_0}(BU)\). The success of an \(S\)\nbd{}node relies on the ``totality'' (in a less strict sense) of \(\Gamma_{\alpha_0}(BU)\). Such reliance is the core of our construction and will be discussed in Subsection~\ref{sec:S strategy}.
\end{remark*}

\subsection{C-node and local dynamic priorities}\label{sec:C node}
Let \(\xi\) be a \(C(\alpha_0,\alpha_1,\alpha_2)\)\nbd{}node.
Before we describe \(\visit(\xi)\) at stage~\(s\), we need to define the ``local'' ``dynamic'' priority $\prec_{\xi,s}$ which connects some pairs of nodes that one is in the $\Sigma_3$- and the other is in the $\Pi_3$\nbd{}worlds of $\xi$. 
It contrasts the global priority \(\prec\) (without subscripts) defined in Definition~\ref{def:global priority} which depends only on the priority tree. In the notation \(\prec_{\xi,s}\), the subscript \(\xi\) corresponds to the ``local'' part and \(s\) corresponds to the ``dynamic'' part.

Recall that the procedure of establishing a pair was introduced in (\ref{it:Q1}) in Subsection~\ref{sec:Q node}. 

\begin{definition}[local priority]\label{def:local priority}
At stage~\(s\), let 
\[
    (\xi\concat n_0,\alpha_0), \ldots, (\xi\concat n_{k-1}, \alpha_{k-1}).
\]
be the list of all pairs in descending order of pairing priority (Definition~\ref{def:pairing priority}).

For \(\eta\in \cT[\xi,\Sigma_3]\) and \(\beta\in \cT[\xi,\Pi_3]\) (Note that $\eta$ and $\beta$ are not $\prec$\nbd{}comparable), we define \(\prec_{\xi,s}\) as follows:
\begin{enumerate}
    \item\label{it:local priority 1} \(\eta\prec_{\xi,s} \beta\) if there exists some \(i<k\) such that \(\eta\) extends \(\xi\concat m\) for some \(m\le n_i\) and \(\alpha_i\preceq \beta\),
    \item\label{it:local priority 2} \(\beta\prec_{\xi,s} \eta\) if there exists some \(i<k\) such that \(\beta\preceq \alpha_i\) and \(\eta\) extends \(\xi\concat m\) for some \(n_i<m\). 
\end{enumerate}

(If the list is empty, then \(\prec_{\xi,s}=\varnothing\).)
\end{definition}

\begin{remark*}
We will often write \(\prec_\xi\) for short as the stage~\(s\) is usually clear from context. 
Note also that \(\prec_\xi\) is a partial order; \(\eta\in \cT[\xi,\Sigma_3]\) and \(\beta\in \cT[\xi,\Pi_3]\) can be \(\prec_\xi\)\nbd{}incomparable. 
However, if \(\eta\) is a \(P\)- or \(Q\)\nbd{}node and \(\beta\) is a \(Q\)\nbd{}node that is paired, then either \(\eta\prec_\xi\beta\) or \(\beta\prec_\xi\eta\). 

In Figure~\ref{fig:local priority 2}, two pairs \((C\concat n_0,Q_0)\) and \((C\concat n_1,Q_1)\) are given. An arrow \(\alpha\to\beta\) indicates \(\alpha\prec\beta\) if \(\alpha\) and \(\beta\) lie in the same row; \(\alpha\prec_\xi \beta\) if they lie in different rows: labels along the arrows refer to corresponding items in Definition~\ref{def:local priority}. For example, Item~(\ref{it:local priority 1}) of Definition~\ref{def:local priority} gives, without an explicit arrow in Figure~\ref{fig:local priority 2}, \(C\concat 0\prec_\xi \beta_1\), which follows equivalently from \(C\concat 0\prec C\concat n_0\prec_\xi Q_0\prec \beta_1\). In a similar fashion, Item~(\ref{it:local priority 2}) of Definition~\ref{def:local priority} gives \(\beta_1\prec_\xi C\concat(n_1+1)\), and we might view this as a consequence of \(\beta_1\prec Q_1\prec_\xi C\concat(n_1+1)\). Note that \(C\concat 0\) and \(\beta_0\) are not \(\prec_\xi\)\nbd{}comparable. Although \(\beta_0\) and \(C\concat n_0\) are not \(\prec_\xi\)\nbd{}comparable, we observe that whenever \(\beta_0\) announces progress, \(Q_0\) is initialized, and by~\ref{it:p1} (Subsection~\ref{sec:a pair}) the pair \((C\concat n_0,Q_0)\) is canceled and by~\ref{it:p2} \(C\concat n_0\) is therefore initialized (Lemma~\ref{lem:cleared of P} is a special case of this phenomenon). Also,~\ref{it:p3} is justified by Item~(\ref{it:local priority 2}) of Definition~\ref{def:local priority}.
\end{remark*}

\begin{figure}
    \centering
    \includegraphics[width=\textwidth]{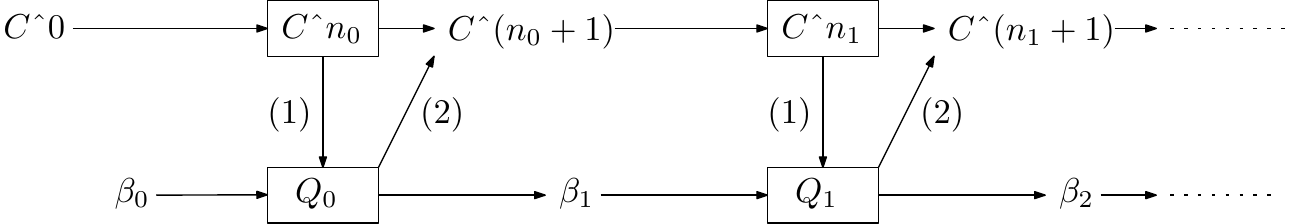}
    \caption{An example of Local Dynamic Priority}\label{fig:local priority 2}
\end{figure}

With the definition of local dynamic priority, we are ready to describe \(\visit(\xi)\) for the \(C(\alpha_0,\alpha_1,\alpha_2)\)\nbd{}node \(\xi\).

\medskip\noindent
\(\visit(\xi)\):
Let \(s^*<s\) be the last \(\xi\)\nbd{}stage. Let \(n\) be the least (if any) such that \(\gamma_{\alpha_0}(BU;x_{(n,U)})[s]>\gamma_{\alpha_0}(BU;x_{(n,U)})[s^*]\). We are ready to visit nodes in \(\cT[\xi,\Pi_3]\). We start with \(\eta=\xi\concat \omega\). Whenever we run into \(\visit(\eta)\) for \(\eta\in \cT[\xi,\Pi_3]\), we interrupt the subroutine and do the following:
\begin{enumerate}
    \item If \(\abs{\eta}>s\) and \(\xi\concat n\) has not been initialized, \(\visit(\xi\concat n)\); if \(\abs{\eta}>s\) and \(\xi\concat n\) has been initialized, we stop the current stage. \emph{(After nodes in \(\cT[\xi,\Pi_3]\) has been visited, we decide if we need to visit nodes in \(\cT[\xi,\Sigma_3]\).)}
    
    \item If \(\xi\concat n\prec_\xi \eta\), then \(\visit(\xi\concat n)\). \emph{(We stop visiting nodes in \(\cT[\xi,\Pi_3]\).)}
    
    
    \item Otherwise, \(\visit(\eta)\). (\(\xi\concat n\) could potentially be initialized by~\ref{it:p1}, \ref{it:p2} or~\ref{it:p3} when certain \(\eta\) is visited.)
\end{enumerate}
If \(n\) does not exist, \(\visit(\xi\concat \omega)\) without interruptions.



\begin{remark*} At the $C$\nbd{}node, we are facing two parallel worlds which are connected via pairs.
Instead of having two separated accessible path, we explore the two possibilities simultaneously.
Whenever we see an interaction via a pair, we stay in the winner's world. If we do not see any interaction, we do not bet as either world can be the winner; indeed, we visit both worlds simultaneously.
\end{remark*}

\subsection{Permitting center}\label{sec:permitting center}
The permitting center is not represented on the tree. At the beginning of a stage~\(s\), if we have \(A(a)[s-1]\neq A(a)[s]\) for some \(a\), then some \(P\)- or \(Q\)\nbd{}node \(\beta\) might have a problem because \(\Theta(a)\neq A(a)[s]\). 
\(\Theta(a)\) cannot be corrected and we have to prevent \(\beta\) from being visited again by diagonalizing against some ``good'' (whose existence is one of the most important parts of the verification) \(\alpha\in \eU\cup\eW\) (if \(\beta\) is a \(P(\eU,\eW)\)\nbd{}node) or \(\alpha\in \eV\cup\eW\) (if \(\beta\) is a \(Q(\eV,\eW)\)\nbd{}node) and activating \(\alpha\concat d\), which has higher global priority than \(\beta\). This section is to discuss how we choose this \(\alpha\) and this is done at the permitting center at the beginning of each stage.

\begin{remark*}
All tensions among nodes on the priority tree trace back to a \(\Theta\) defined by either a \(P\)- or \(Q\)\nbd{}node and a point \(a\) such that \(\Theta(a)\neq A(a)\).
\end{remark*}

Let \(\lambda\) be the root of the priority tree \(\cT\) and \(s\) be the current stage.

\medskip\noindent
Permitting center:
\begin{enumerate}
    \item\label{it:pc 1} If there is no point \(a\) with \(A(a)[s-1]\neq A(a)[s]\), we do nothing.
    \item\label{it:pc 2} (making choice) Let \(a\) be such that \(A(a)[s-1]\neq A(a)[s]\). Let \(\beta\) be the \(P(\eU,\eW)\)- or \(Q(\eV,\eW)\)\nbd{}node, if any, which has not received attention and \(\Theta(a)=0\). We list \(\eW\) as \(\xi_0\subseteq \dots \subseteq \xi_{k-1}\) where \(k=\abs{\eW}\) (the list is empty if \(\eW=\varnothing\)). We define the choice function \(\chi\) by letting 
    \[
        \chi(\beta;a)=\xi_i    
    \]
    for the least \(i\), if any, such that for each \(\alpha\in \cf(\xi_i)\) we have (see Subsection~\ref{sec:R test} for the definition of \(\Test\) function)
    \[
        \Test(\alpha; a, \dw_\beta(a)(\xi_i))=1;
    \]
    if such \(i\) does not exist (including the case when \(\eW=\varnothing\)),
    \[
        \chi(\beta;a)=\beta^-.
    \]
    \item\label{it:pc 3} Let \(\sigma=\chi(\beta;a)\). We \emph{send} the point \(\dw_\beta(a)(\sigma)\) to \(\per(\sigma)\), which is the \(\Permit(X)\)\nbd{}node where \(X=\type(\sigma)\).
\end{enumerate}
If~(\ref{it:pc 2}) happens, we say that \(\beta\) \emph{receives attention} (on \(a\) at stage~\(s\)).

\begin{remark*}
\begin{enumerate}[label=(\roman*)]
\item For convenience, \(\beta\) is allowed to receive attention only once since last initialization or disarmament.
\item There can exist at most one \(\beta\) as desired in~(\ref{it:pc 2}). To see this, let \(\beta_0\) and \(\beta_1\) be two \(P\)- or \(Q\)\nbd{}nodes who build \(\Theta_0\) and \(\Theta_1\) respectively. 
If both of them are \(P\)\nbd{}nodes (or \(Q\)\nbd{}nodes), they are \(\prec\)\nbd{}comparable; otherwise, being terminal nodes, there must exist some \(\xi\) such that \(\beta_0\in \cT[\xi,\Sigma_3]\) and \(\beta_1\in \cT[\xi,\Pi_3]\), say. By the remark below Definition~\ref{def:local priority}, they are \(\prec_\xi\)\nbd{}comparable.
Therefore, whenever we visit the node of higher global (or local) priority, the one of lower global (or local) priority is initialized or disarmed.
Therefore, domains of~\(\Theta_0\) and \(\Theta_1\) must be disjoint.
\item \(\chi(\beta;a)\) chooses the node~\(\sigma\) such that we are allowed to enumerate the point \(\dw_\beta(a)(\sigma)\) into~\(X\). However, we are not enumerating it into~\(X\) immediately; we will do so when we visit \(\per(\sigma)\) provided that \(\beta\) has not been disarmed or initialized. The delay feature of the permitting is crucial to Lemma~\ref{lem:R correct}.
\end{enumerate}
\end{remark*}

\subsection{R-test}\label{sec:R test}
Intuitively, an \(R\)-node would like to put a restraint on \(W\) while \(\Phi(B;z_{(n,W)})\downarrow\). 
If a \(P(\eU,\eW)\)\nbd{}node \(\beta\), for example, receives attention on \(a\), we are not free to enumerate the point \(w=\dw_\beta(a)(\sigma)\) into \(\type(\sigma)\) (where \(\sigma\in\eW\)): an \(R\)\nbd{}node \(\alpha\in \cf(\sigma)\) may still have a restraint on \(W\). 
To tell in which situation an \(R\)\nbd{}node \(\alpha\in \cf(\sigma)\) allows \(w\) to be enumerated in a delayed fashion, we now introduce the \(\Test\) function, which we have used without definition in Step~(\ref{it:pc 2}) of permitting center.
The following definition is a bit technical and the whole picture can only be seen in the verification section, particularly Subsection~\ref{sec:R strategy}.   
\begin{definition}[R-test]\label{def:R test}
    Let \(\alpha\) be an \(R(\alpha_0,\alpha_1,\alpha_2)\)\nbd{}node. Fix numbers \(a\) and \(w=\dw_\beta(a)(\sigma)\) for some \(\beta\) and \(\sigma\). \(\beta\) is assumed to be either a \(P(\eU,\eW)\)- or a \(Q(\eV,\eW)\)\nbd{}node with \(\sigma\in \eW\) and \(\alpha\in \cf(\sigma)\). 

    Suppose that \(\beta\) is a \(P(\eU,\eW)\)\nbd{}node. For every $n$, we define \(\test(\alpha,n;a,w)[s]{=1}\) if one of the following holds
    \begin{enumerate}[label=(R-t\arabic*)]
        \item\label{it:R-t1} \(\Phi(B;z_{(n,W)})\uparrow[s]\);
        \item\label{it:R-t2} \(\Phi(B;z_{(n,W)})\downarrow[s]\) and \(w>\varphi(B;z_{(n,W)})[s]\); or
        \item\label{it:R-t3} \(\Phi(B;z_{(n,W)})\downarrow[s]\), 
        \(\bigsame(U;\gamma_{\alpha_0}(BU;a)[s^*],s^*,s)\) and \(\bigsame(B;\gamma_{\alpha_0}(BU;a)[s^*],s^*,s)\),
        where \(s^*\le s\) is the last stage when we define \(\Phi(B;z_{(n,W)})\).
    \end{enumerate}
    
    Suppose that \(\beta\) is a \(Q(\eV,\eW)\)\nbd{}node. We define \(\test(\alpha,n;a,w)[s]=1\) if either~\ref{it:R-t1}, \ref{it:R-t2} or the following holds
    \begin{enumerate}[resume*]
        \item\label{it:R-t4} \(\Phi(B;z_{(n,W)})\downarrow[s]\),
        \(\bigsame(V;\gamma_{\alpha_1}(BV;a)[s^*],s^*,s)\) and \(\bigsame(B;\gamma_{\alpha_1}(BV;a)[s^*],s^*,s)\),
        where \(s^*\le s\) is the last stage when we define \(\Phi(B;z_{(n,W)})\).
    \end{enumerate}

    Define \(\Test(\alpha;a,w)[s]=1\) if for every \(n\) we have \(\test(\alpha,n;a,w)[s]=1\).
\end{definition}


\begin{remark*}
Informally, $\test(\alpha,n;a,w)[s]=1$ means that $w$ passed the test posed by $\alpha$ so that the computation $\Phi(B;z_{(n,W)})$ would not be affected by $w$ entering $W$.
Items~\ref{it:R-t1} and~\ref{it:R-t2} are natural, whereas items~\ref{it:R-t3} and~\ref{it:R-t4} are toward that the computation $\Phi(B; z_{(n,W)})$ is now ``protected'' by the $U$- and $V$\nbd{}side respectively.


We could define $\test(\alpha,n;a,w)[s]=0$ if $\test(\alpha,n;a,w)[s]\neq 1$, but since we never use it, we leave it as it is.
\end{remark*}

If \(\test(\alpha,n;a,w)=1\) via~\ref{it:R-t2}, then \(\test(\alpha,m;a,w)=1\) for each \(m<n\); 
if \(\test(\alpha,n;a,w)=1\) via~\ref{it:R-t1}, then \(\test(\alpha,m;a,w)=1\) for each \(m>n\). In fact, we have the following lemma to simplify the test, where the number \(n\) in the lemma is also easy to determine.

\begin{lemma}\label{lem:test}
    At stage~\(s\), let \[
        n = \max \{ m \mid \test(\alpha, m; a, w) = 1 \text{ via~\ref{it:R-t2}}\} + 1.
    \]
    Suppose $\test(\alpha, n; a, w) = 1$, then $\Test(\alpha; a, w) = 1$. 
\end{lemma}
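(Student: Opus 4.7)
The plan is to verify $\test(\alpha, m; a, w) = 1$ for every $m$ by splitting into three ranges: $m < n$, $m = n$, and $m > n$. The middle case is the hypothesis. For $m < n$, if $n \geq 1$ then by the definition of $n$ we have $\test(\alpha, n-1; a, w) = 1$ via~\ref{it:R-t2}; the observation recorded immediately before the lemma---that a pass via~\ref{it:R-t2} propagates downward, since the use $\varphi$ is non-decreasing in its argument and $\Phi$ has downward-closed domain---then yields the rest of the range at once; if $n = 0$ the range is vacuous.

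For $m > n$, the choice of $n$ as a maximum plus one forces $\test(\alpha, n; a, w) = 1$ to hold via one of~\ref{it:R-t1}, \ref{it:R-t3}, or~\ref{it:R-t4}, never via~\ref{it:R-t2}. If it holds via~\ref{it:R-t1}, then $\Phi(B; z_{(n, W)}) \uparrow [s]$; since the edge parameters $z_{(\cdot, W)}$ produced by the $R$-node are non-decreasing in the index and the domain of $\Phi$ is downward closed, $\Phi(B; z_{(m, W)}) \uparrow [s]$ for all $m > n$, so~\ref{it:R-t1} passes at $m$. If it holds via~\ref{it:R-t3} or~\ref{it:R-t4}, then either $\Phi(B; z_{(m, W)}) \uparrow [s]$, in which case~\ref{it:R-t1} handles $m$, or $\Phi(B; z_{(m, W)}) \downarrow [s]$, and we must propagate the relevant $\bigsame$ statement from the witness stage at index $n$ to the witness stage at index $m$.

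The main obstacle is exactly this propagation, since the stage $s^*$ in~\ref{it:R-t3} and~\ref{it:R-t4} depends on the index. Inspecting $\visit(\alpha)$, indices are processed in increasing order and $\Phi(B; z_{(m, W)})$ is (re)defined only after $\Phi(B; z_{(n, W)})$ has been confirmed convergent in the same pass; consequently the last defining stages satisfy $s^*_n \leq s^*_m \leq s$. Given the hypothesis $\bigsame(U; \gamma_{\alpha_0}(BU; a)[s^*_n], s^*_n, s)$, persistence of axioms in the Turing functional $\Gamma_{\alpha_0}$ keeps the computation from $s^*_n$ valid at $s^*_m$, whence $\gamma_{\alpha_0}(BU; a)[s^*_m] \leq \gamma_{\alpha_0}(BU; a)[s^*_n]$, and the required statement $\bigsame(U; \gamma_{\alpha_0}(BU; a)[s^*_m], s^*_m, s)$ contracts from the one at $s^*_n$; the $B$-part is identical, and the~\ref{it:R-t4} case runs with $\gamma_{\alpha_1}$ and $V$ replacing $\gamma_{\alpha_0}$ and $U$. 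Assembling the three ranges gives $\test(\alpha, m; a, w) = 1$ for every $m$ and therefore $\Test(\alpha; a, w) = 1$.
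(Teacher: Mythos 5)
Your proof is correct and follows essentially the same route as the paper: handle $m<n$ via the downward propagation of~\ref{it:R-t2} from the index $n-1$ realizing the maximum, use the hypothesis at $m=n$, and for $m>n$ observe that the witness stage $s^*$ is monotone in the index so that the $\bigsame$ statements contract from $n$ to $m$. You are a bit more explicit than the paper in two places—justifying $s^*_n\le s^*_m$ via the order of processing in $\visit(\alpha)$ together with the monotonicity of $\varphi$, and justifying that the use $\gamma_{\alpha_0}(BU;a)$ cannot grow between $s^*_n$ and $s^*_m$ by persistence of the $\Gamma_{\alpha_0}$-computation—both of which the paper states without comment; these are correct and welcome additions, not deviations.
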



\begin{proof}
    Since \(\test(\alpha,n;a,w)=1\) via~\ref{it:R-t1} implies that \(\Test(\alpha;a,w)=1\), we assume, without loss of generality, that \(\alpha\) is a \(P\)\nbd{}node and \(\test(\alpha,n;a,w)=1\) via~\ref{it:R-t3}. 
    Hence we have \(\bigsame(U;\gamma_{\alpha_0}(BU;a)[s^*],s^*,s)\) and \(\bigsame(B;\gamma_{\alpha_0}(BU;a)[s^*],s^*,s)\), where \(s^*\le s\) is the last stage when we define \(\Phi(B;z_{(n,W)})\).
    Let \(m>n\) be such that \(\Phi(B;z_{(m,W)})\downarrow[s]\) and \(s^{**}\le s\) be the last stage when we define \(\Phi(B;z_{(m,W)})\). As \(s^{*}\le s^{**}\), therefore we have \(\bigsame(U;\gamma_{\alpha_0}(BU;a)[s^{**}],s^{**},s)\) and \(\bigsame(B;\gamma_{\alpha_0}(BU;a)[s^{**}],s^{**},s)\). Hence \(\test(\alpha,m;a,w)=1\). 
\end{proof}

\subsection{Permit(X)-node}\label{sec:permit node}
Suppose \(\alpha\) is a \(\Permit(X)\)\nbd{}node and the current stage is \(s\). Recall that at Step~(\ref{it:pc 3}) of permitting center, some point was sent to a \(\Permit(X)\)\nbd{}node. 

\medskip\noindent
\(\visit(\alpha)\): Let \(x=\dw_\beta(a)(\sigma)\) be the point sent by permitting center~(\ref{it:pc 3}), where \(\sigma=\chi(\beta;a)\), \(\per(\sigma)=\alpha\) and \(\type(\sigma)=X\). (We assume that \(x\) is not discarded yet).
\begin{enumerate}
    \item\label{it:Pi3 init Sigma3 2} We enumerate \(x\) into \(X\) and activate the \(d\)\nbd{}outcome of \(\sigma\).
    \item Then \(\visit(\alpha\concat 0)\).
\end{enumerate}

\begin{remark*}
    Besides Convention~\ref{convention: init}, Item (1) above is also accompanied by an initialization as pointed out in Lemma~\ref{lem:cleared of P}.
\end{remark*}

\subsection{Disarming process}\label{sec:disarm}

Let \(s\) be the current stage.
Let \(\beta\) be a \(Q(\eV,\eW)\)\nbd{}node with \(\eW\neq \varnothing\), \(\alpha\) be an \(S(\alpha_0,\alpha_1,\alpha_2)\)\nbd{}node, 
\(\xi\) be the unique \(C(\alpha_0,\alpha_3,\alpha_4)\)\nbd{}node with \(\xi\concat \omega \subseteq \alpha \subsetneq \alpha\concat n\subseteq \beta\), and \((\xi\concat m,\beta)\) is paired at some \(s_0 < s\).

\medskip\noindent
\((\xi\concat m, \alpha\concat n,\beta)\)-DisarmingProcess: We disarm \(\beta\) immediately if one of the following events is observed at stage~\(s\):
\begin{enumerate}[label=(DP-\arabic*)]
    \item\label{it:DP-1} At the permitting center, some \(P\)- or \(Q\)\nbd{}node extending $\xi\concat l$ with $l \leq m$ receives attention.
    
    
    \item\label{it:DP-2} At \(\Permit(U)\)\nbd{}node, both \(\lnot\,\bigsame(U;\gamma_{\alpha_0}(BU;x_{(m,U)})[s_1],s_1,s)\) and \\
    \(\bigsame(B;\psi_\alpha(B;y_{(n,V)})[s_1],s_1,s)\) hold, where \(s_1<s\) is the last \(\alpha\)\nbd{}stage when we define \(\Psi_\alpha(B;y_{(n,V)})\). 
    
    \item\label{it:DP-3} At the node \(\xi\), the outcome to be visited is \(\xi\concat l\) for some \(l\le m\).
    
    
    \item\label{it:DP-4} At the node \(\alpha\),  \(\psi_\alpha(B;y_{(n,V)})\downarrow[s]<\gamma_{\alpha_0}(BU;x_{(m,U)})[s]\) and for the last \(\alpha\)\nbd{}stage~\(s_1<s\) we have either \(\psi_\alpha(B;y_{(n,V)})\uparrow[s_1]\) or \(\psi_\alpha(B;y_{(n,V)})\downarrow[s_1]\ge \gamma_{\alpha_0}(BU;x_{(m,U)})[s_1]\).
    
    \item\label{it:DP-5} At the end of the stage when we maintain the global parameters, \(x_{(m,U)}[s-1]\neq x_{(m,U)}[s]\).
\end{enumerate}
If one of the events is observed, we say that \((\xi\concat m, \alpha\concat n,\beta)\)-DisarmingProcess is \emph{triggered}.
The \((\xi\concat m, \alpha\concat n,\beta)\)-DisarmingProcess lasts until \((\xi\concat m,\beta)\) is canceled.

When \(\beta\) is disarmed, we discard its \(\Theta\) and \(\dw\).
If there is a diagonalizing witness \(x=\dw_\beta(a)(\sigma)\) for some \(a\) and \(\sigma\in \eV\cup\eW\) having been sent to \(\per(\sigma)\) (as described in permitting center~(\ref{it:pc 3})), we also discard the diagonalizing witness \(x\).

\begin{remark*}
\ref{it:DP-5} follows from~\ref{it:DP-1} and~\ref{it:DP-3} and we state it explicitly for convenience. In fact, \ref{it:DP-2} needs a bit of justification. Since~\ref{it:DP-5} implies \(x_{(m,U)}[s_1]=x_{(m,U)}[s]\), \(x_{(m,U)}\) in~\ref{it:DP-2} is legitimate. 
There could be multiple disarming processes but they are working independently respecting Convention~\ref{convention: init}.
The latter part of~\ref{it:DP-4} is there only to prevent one from disarming $\beta$ infinitely often if $\psi(B; y_{(n,V)})\downarrow < \gamma_{\alpha_0}(BU;x_{(m,U)})$.   

\end{remark*}
\subsection{Construction}\label{sec:construction}
At the beginning of stage~\(s\), we let the permitting center acts and then \(\visit(\lambda)\) where \(\lambda\) is the root of the priority tree \(\cT\). Meanwhile, the disarming processes are working in the background. We stop the current stage whenever \(\visit(\alpha)\) with \(\abs{\alpha}=s\).

This is the end of the construction.

\section{Verification}\label{sec:con and ver}
A \emph{path} of a tree is a collection of nodes such that they are pairwise comparable and are closed under initial segments. The standard way to pick up the \emph{``true path''} is to collect inductively those leftmost nodes that are visited infinitely often. In our construction, however, such nodes can be initialized infinitely often. Therefore we shall slightly modify the definition of the true path.

\begin{definition}\label{def:T star}
    Let \(\cT\) be the priority tree and the construction be given as above. We define 
    \[
        \cT^*=\{\alpha\in \cT\mid \text{\(\alpha\) is visited infinitely often and \(\alpha\) is \emph{injured} finitely often}\},
    \]
    where ``injured'' means ``initialized or disarmed''.
    The leftmost \emph{infinite} path \(\rho\) of \(\cT^*\), if any, is the \emph{true path}. For \(\alpha\in\rho\), \(o\)\nbd{}outcome is the \emph{true outcome} if \(\alpha\concat o\in \rho\).
\end{definition}

Clearly \(\lambda\in \cT^*\), \(\cT^*\) is a two-branching (the labels are not recursive) tree by Convention~\ref{convention: init}, and \(\cT^*\) is \(0''\)\nbd{}recursive. In Lemma~\ref{lem:T star path} we show that \(\cT^*\) has an infinite path. In fact, \(\cT^*\) can have multiple infinite paths, each of which is in fact as good as the true path --- we just pick one in order to discuss the index of a winning set (Lemma~\ref{lem:index}).

In Subsections~\ref{sec:R strategy} and~\ref{sec:S strategy}, we show that \(R\)- and \(S\)\nbd{}nodes can actually work as intended at each stage. Therefore nodes in \(\cT^*\) achieve their goals (Lemma~\ref{lem:T star}). In Subsection~\ref{sec:proof of main}, we argue that \(\cT^*\) has an infinite path \(\rho\) and all requirements for the set \(X\) (given by Lemma~\ref{lem:full requirement}) are satisfied.







\subsection{\texorpdfstring{Success of $R$-strategies}{success of R-strategies}}\label{sec:R strategy}

We will not distinguish between an \(R\)\nbd{}node and an \(R^{-}\)\nbd{}node in this section: the following definition also applies to an \(R^-\)\nbd{}node. Lemma~\ref{lem:R con} is strictly easier to verify (more precisely, Case~1 does not happen).

\begin{definition} [R-Condition]\label{def:R con}
Let \(\alpha\) be an \(R(\alpha_0,\alpha_1,\alpha_2)\)\nbd{}node. We say that \(R(\alpha,n)\)\nbd{}Condition holds at stage~\(t\) if either
\begin{enumerate}
    \item \(\Phi(B;z_{(n,W)})\uparrow[t]\), or
    \item \(\Phi(B;z_{(n,W)})\downarrow[t]\) but \(\bigsame(W,\varphi(B;z_{(n,W)})[s],s,t)\) where \(s\le t\) is the last stage when we define \(\varphi(B;z_{(n,W)})\).
\end{enumerate}
We say that \(R(\alpha)\)\nbd{}Condition holds if \(R(\alpha,n)\)\nbd{}Condition holds for each \(n\).
\end{definition}

Whenever we define \(\Phi(B;z_{(n,W)})\) at a stage~\(s\), \(R(\alpha,n)\) holds at stage~\(s\) trivially.

The purpose of \(R\)\nbd{}Condition is to ensure the correctness of $\Phi$ defined at $R$:

\begin{lemma}[R-Correctness]\label{lem:R correct}
     Let \(\alpha\) be an \(R(\alpha_0,\alpha_1,\alpha_2)\)\nbd{}node, and assume that $R(\alpha)$\nbd{}Condition holds at every stage. For each \(\alpha\)\nbd{}stage~\(s\) and each \(n\le s\), if \(\Phi(B;z_{(n,W)})\downarrow[s]\), then \(\Phi(B;z_{(n,W)})\downarrow[s] = A(z_{(n,W)})[s]\).
\end{lemma}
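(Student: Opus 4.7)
The plan is to trace $\Phi(B;z_{(n,W)})$ back to its most recent defining stage and then use the hypothesis that $R(\alpha)$-Condition holds to force the equality $\Phi=A$ to persist. Let $s_0\le s$ be the last stage at which $\Phi(B;z_{(n,W)})$ was (re)defined, via step~(4) or step~(3.d) of $\visit(\alpha)$ in Subsection~\ref{sec:R node}. In both clauses the algorithm explicitly sets $\Phi(B;z_{(n,W)})[s_0] = A(z_{(n,W)})[s_0]$, and the use $\varphi_0 := \varphi(B;z_{(n,W)})[s_0]$ satisfies $\varphi_0 \ge \gamma_{\alpha_2}(BW;z_{(n,W)})[s_0]$ (either directly in step~(4.a), or inherited through step~(3.d), where the reused use was originally chosen to exceed $\gamma_{\alpha_2}$ and $\gamma_{\alpha_2}$ has not moved since). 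Since $\Phi$ is still defined at $s$ using the axiom laid down at $s_0$, $B$ has been stable below $\varphi_0$ throughout $[s_0,s]$, so $\Phi(B;z_{(n,W)})[s] = A(z_{(n,W)})[s_0]$. It therefore suffices to show $A(z_{(n,W)})[s] = A(z_{(n,W)})[s_0]$.

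Suppose toward a contradiction that $z_{(n,W)}$ enters $A$ at some stage $t\in (s_0,s]$. The hypothesis $R(\alpha,n)$-Condition, applied at stage~$s$, gives $\bigsame(W,\varphi_0,s_0,s)$; combined with the $B$-stability noted above and $\varphi_0 \ge \gamma_{\alpha_2}(BW;z_{(n,W)})[s_0]$, both $B$ and $W$ are unchanged below $\gamma_{\alpha_2}(BW;z_{(n,W)})[s_0]$ throughout $[s_0,s]$, so $\Gamma_{\alpha_2}(BW;z_{(n,W)})$ is preserved: $\Gamma_{\alpha_2}(BW;z_{(n,W)})[s] = \Gamma_{\alpha_2}(BW;z_{(n,W)})[s_0]$.

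The main obstacle is to secure the invariant $\Gamma_{\alpha_2}(BW;z_{(n,W)})[s_0] = A(z_{(n,W)})[s_0]$ at the defining stage, for then $\Gamma_{\alpha_2}(BW;z_{(n,W)})[s] = 0 \ne 1 = A(z_{(n,W)})[s]$, contradicting the fact that $s$ is itself an $\alpha$\nbd{}stage, hence an expansionary stage for $\alpha_2$ whose length of agreement reaches~$z_{(n,W)}$. This invariant is not spelled out verbatim in step~(4) but is forced by the protocol for visiting $\alpha$: since $\alpha$ extends $\alpha_2\concat\infty$, every $\alpha$\nbd{}stage is expansionary for $\alpha_2$, and step~(4) is only executed once $\loa_{\alpha_2}(s_0)>z_{(n,W)}$ (this is what makes $\gamma_{\alpha_2}(BW;z_{(n,W)})[s_0]$ meaningfully bound $\varphi_0$). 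The subtler part is checking that step~(3.d), which reuses an old use rather than a fresh one, does not compromise the invariant; this follows because step~(3.d) is entered precisely when $\gamma_{\alpha_2}$ has not grown since the previous $\alpha$\nbd{}stage, so the $\Gamma$\nbd{}computation surviving from the earlier definition still agrees with $A(z_{(n,W)})$ at the current stage. Unpacking this bookkeeping carefully, and in particular verifying that the same argument applies uniformly to both step~(4) and step~(3.d) definitions, is the delicate part of the proof.
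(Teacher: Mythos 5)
Your proposal follows essentially the same route as the paper: trace $\Phi(B;z_{(n,W)})$ back to the last defining $\alpha$\nbd{}stage $s_0$, observe that the use bounds $\gamma_{\alpha_2}(BW;z_{(n,W)})$ there, invoke $B$\nbd{}stability (from $\Phi$ staying defined) together with $W$\nbd{}stability (from the $R$\nbd{}Condition) to preserve the $\Gamma_{\alpha_2}$\nbd{}computation, and close the loop using that both $s_0$ and $s$ are $\alpha_2$\nbd{}expansionary. The paper phrases it as a direct chain of equalities rather than a contradiction, and simply asserts the use bound $\varphi_0>\gamma_{\alpha_2}(BW;z_{(n,W)})[s_0]$ whereas you flag the step~(3.d) reuse as needing extra justification; that flagged verification is a correct concern but you do not fully carry it out, so your write-up is slightly less complete than the (already terse) original.
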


\begin{proof}
    Fix an $\alpha$\nbd{}stage~\(s\) and a number $n$ such that \(\Phi(B;z_{(n,W)})\)[s] is defined. 
    Let \(s_0<s\) be the last $\alpha$\nbd{}stage when we defined \(\Phi(B;z_{(n,W)})\). 
    Therefore we have
    \begin{enumerate}
        \item \(\Phi(B;z_{(n,W)})[s_0]=A(z_{(n,W)})[s_0]\),
        \item \(\bigsame(B;\varphi(B;z_{(n,W)})[s_0],s_0,s)\), and
        \item \(\varphi(B;z_{(n,W)})[s]=\varphi(B;z_{(n,W)})[s_0]>\gamma_{\alpha_2}(BW;z_{(n,W)})[s_0]\).
    \end{enumerate}
    Furthermore, since \(R(\alpha,n)\)\nbd{}Condition holds, we also have
    \begin{enumerate}[resume]
        \item \(\bigsame(W;\varphi(B;z_{(n,W)})[s_0],s_0,s)\).
    \end{enumerate}

    Hence we have
    \begin{align*}
    \Phi(B;z_{(n,W)})[s] &= \Phi(B;z_{(n,W)})[s_0]\ \ \ (\mbox{by (2)})\\
    &= A(z_{(n,W)})[s_0]\ \ \ (\mbox{by (1)})\\
    &= \Gamma_{\alpha_2}(BW;z_{(n,W)})[s_0]\ \  (\mbox{since $s_0$ is $\alpha_2$\nbd{}expansionary})\\
    &= \Gamma_{\alpha_2}(BW;z_{(n,W)})[s] \ \ \ (\mbox{by (2), (3) and (4)})\\
    &= A(z_{(n,W)})[s] \ \ \ (\mbox{since $s$ is $\alpha_2$\nbd{}expansionary}). \qedhere
    \end{align*}
\end{proof}

By Lemma~\ref{lem:R correct}, it suffices to check that our construction ensures \(R\)\nbd{}condition.

Note that if \(\test(\alpha,n;a,w)[s]=1\) via~\ref{it:R-t1} or~\ref{it:R-t2}, enumerating \(w\) into \(W_{\alpha_0,\alpha_1}\) does not violate \(R(\alpha,n)\)\nbd{}condition; if \(\test(\alpha,n;a,w)[s]=1\) via~(R-t3), we will send \(w\) to \(\Permit(W_{\alpha_0,\alpha_1})\) and will enumerate \(w\) into \(W\) whenever \(\Permit(W_{\alpha_0,\alpha_1})\) is visited unless \(w\) is discarded.


\begin{lemma} [R-Condition Satisfaction]\label{lem:R con}
    For each \(R\)\nbd{}node \(\alpha\) and each stage~\(t\), \(R(\alpha)\)\nbd{}Condition holds.
\end{lemma}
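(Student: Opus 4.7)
The plan is to argue by induction on the stage $t$, with the induction hypothesis being that $R(\alpha',m)$\nbd{}Condition has held at every stage $< t$, for every $R$\nbd{}node $\alpha'$ and every index $m$. Fix $\alpha = R(\alpha_0,\alpha_1,\alpha_2)$ and an index $n$, and suppose for contradiction that $R(\alpha,n)$\nbd{}Condition fails at stage $t$. Then $\Phi(B;z_{(n,W)})\downarrow[t]$, and some number $w < \varphi(B;z_{(n,W)})[s]$ enters $W = W_{\alpha_0,\alpha_1}$ at a stage $t' \in (s,t]$, where $s \le t$ is the last stage at which $\varphi(B;z_{(n,W)})$ was (re\nbd{})defined.

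I then trace the enumeration back to its source. Every enumeration into $W$ takes place at a visit to $\Permit(W)$, carrying out a request placed at the permitting center at some earlier stage $s' \le t'$: there, a $P$- or $Q$\nbd{}node $\beta$ received attention on some input $a$, chose $\sigma = \chi(\beta;a)$ with $\type(\sigma) = W$, and sent $w = \dw_\beta(a)(\sigma)$ to $\Permit(W)$. Using the priority-tree structure of the nodes working on $W$ together with Convention~\ref{convention: init}, I verify that either $\alpha$ has been initialized by stage $t$ (in which case $\Phi$ has been discarded and there is nothing to prove), or $\alpha$ lies above $\sigma$ on $\cT$, hence $\alpha \in \cf(\sigma)$. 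In the latter case the choice of $\sigma$ via $\chi$ guarantees $\Test(\alpha;a,w)[s']=1$, so that $\test(\alpha,n;a,w)[s']=1$ via one of the four clauses of Definition~\ref{def:R test}.

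The cases \ref{it:R-t1} and \ref{it:R-t2} are routine. If \ref{it:R-t1} held at $s'$, then $\Phi(B;z_{(n,W)})\uparrow[s']$, which forces a re-definition in $(s',t]$; fresh-use assignment then yields $\varphi[s] > w$, contradicting $w < \varphi[s]$. If \ref{it:R-t2} held at $s'$, then $w > \varphi[s']$; either $\varphi$ is never touched in $(s',t]$ (so $\varphi[s] = \varphi[s']$), or a fresh-use re-definition again forces $\varphi[s] > w$.

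The substantive cases are \ref{it:R-t3} and \ref{it:R-t4}, where $w$ is not directly bounded by the current use. Here the plan is to argue that on the interval $(s',t']$ the construction discards $w$ whenever the sameness hypothesis of the applicable clause is about to fail: for $Q$\nbd{}nodes this is precisely the role of the disarming triggers \ref{it:DP-1}--\ref{it:DP-5} of Subsection~\ref{sec:disarm}, which monitor exactly the $V$- and $B$-sameness conditions appearing in \ref{it:R-t4}; any failure disarms $\beta$ and discards $w$ before it reaches $\Permit(W)$. For $P$\nbd{}nodes the analogous role is played by initialization: a change in $U$ below $\gamma_{\alpha_0}(BU;a)[s^*]$ must be caused by an enumeration at $\Permit(U)$ triggered by some higher-priority request, which via Convention~\ref{convention: init} initializes $\beta$ and again discards $w$. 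Lemma~\ref{lem:test} lets one reduce $\Test$ to a single index throughout. The main obstacle is the combinatorial bookkeeping: tracking all interleavings of $\Phi$-(re-)definitions, enumerations into $U$ or $V$, and initialization/disarmament events on $(s',t']$, and verifying that in every scenario either $\varphi$ is refreshed to exceed $w$ or $w$ is discarded before the enumeration at $t'$ can occur.
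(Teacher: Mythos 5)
There is a genuine gap. Your outline is correct up to the point where you invoke \(\Test(\alpha;a,w)[s']=1\), but your conclusion for the substantive cases \ref{it:R-t3}/\ref{it:R-t4} --- ``verifying that in every scenario either \(\varphi\) is refreshed to exceed \(w\) or \(w\) is discarded before the enumeration at \(t'\) can occur'' --- is not what happens and is not the paper's argument. When the sameness hypotheses of \ref{it:R-t3} or \ref{it:R-t4} are maintained, \(w\) is \emph{not} discarded, and it \emph{does} enter \(W\); permitting that enumeration is precisely what the \(\Test\) function is designed to do. There is no mechanism in the construction that discards \(w\) while the relevant sameness holds, so your dichotomy is not exhaustive, and the interesting case (sameness persists, \(w\) enumerated) is left unaddressed.

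The paper's contradiction in that case is of a different character. Having established \(\bigsame(U;\gamma_{\alpha_0}(BU;a),s,s_1)\) (resp.\ the \(V\)-analogue), and noting that \(\bigsame(B;\varphi(B;z_{(n,W)})[s],s,s_1)\) follows from \(\Phi(B;z_{(n,W)})\downarrow[t]\) together with \(\gamma_{\alpha_0}(BU;a)[s]<\varphi[s]\), one concludes that the computation \(\Gamma_{\alpha_0}(BU;a)\) is preserved from \(s\) to \(s_1\). But both \(s\) and \(s_1\) are \(\alpha_0\)-expansionary stages (since \(\alpha_0\concat\infty\subseteq\Permit(W_{\alpha_0,\alpha_1})=\per(\xi)\), and these are exactly the stages being visited), and \(a<z_{(n,U)}\) lies within the length of agreement at both, so \(A(a)[s]=\Gamma_{\alpha_0}(BU;a)[s]=\Gamma_{\alpha_0}(BU;a)[s_1]=A(a)[s_1]\). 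Yet \(a\) enters \(A\) strictly between \(s\) and \(s_1\) (that is what triggered the attention), giving \(0=A(a)[s]=A(a)[s_1]=1\) --- the actual contradiction. This expansionary-stage argument is the crux of the lemma and is absent from your proposal. Separately, your treatment of \ref{it:R-t1} contains an error: ``yields \(\varphi[s]>w\), contradicting \(w<\varphi[s]\)'' is a tautology, not a contradiction. The paper sidesteps \ref{it:R-t1}/\ref{it:R-t2} altogether by observing that under \(w\le\varphi[s]\) and the \(B\)-sameness forced by \(\Phi(B;z_{(n,W)})\downarrow[t]\), \(\test(\alpha,n;a,w)[s_2]=1\) can only hold via \ref{it:R-t3} or \ref{it:R-t4}.
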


\begin{proof}
Let \(\alpha\) be an \(R(\alpha_0,\alpha_1,\alpha_2)\)\nbd{}node, \(V=V_{\alpha_0}\) and \(W=W_{\alpha_0,\alpha_1}\). Let \(s < t\) be the last \(\alpha\)\nbd{}stage when we defined \(\varphi(B;z_{(n,W)})\). Suppose that we have \(\Phi(B;z_{(n,W)})\downarrow [t]\) but \(\lnot \bigsame(W,\varphi(B;z_{(n,W)})[s],s,t)\). Then we have
\begin{enumerate}
    \item \(\bigsame(B;\varphi(B;z_{(n,W)})[s];s,t)\),
    \item \(\varphi(B;z_{(n,W)})[s]>\gamma_{\alpha_1}(BV;z_{(n,V)})[s]\),
    \item \(\varphi(B;z_{(n,W)})[s]>\gamma_{\alpha_0}(BU;z_{(n,U)})[s]\), and 
    \item for some \(w\le \varphi(B;z_{(n,W)})[s]\), \(w\) is enumerated into \(W\), say, at some (least) stage~\(s_1\) with \(s<s_1\le t\).
\end{enumerate}

We assume that at some stage~\(s_2\) with \(s<s_2\le s_1\), \(\beta\) receives attention due to~\ref{it:ph3} of the attacking process for some \(\beta\). 

\textbf{Case 1.} Suppose that \(\beta\) is a \(P(\eU,\eW)\)\nbd{}node (therefore \(\alpha\) is not an \(R^-\)\nbd{}node) and \(\chi(\beta;a)=\xi\in \eW\) for some \(a\) such that \(\alpha\in \cf(\xi)\), \(\dw_\beta(a)(\xi)=w\) and \(\type(\xi)=W\). From (4) we conclude that \(\per(\xi)\) is visited at \(s_1\).

As \(w\le \varphi(B;z_{(n,W)})[s]\), we conclude that \(\beta\) extends \(\alpha\concat (m, W) \) for some \(m\le n\) and therefore 
\begin{enumerate}[resume]
    \item \(a<z_{(n,U)}\) (by the setting of Section~\ref{sec:parameter}).
\end{enumerate}
Since \(\Test(\alpha;a,w)=1\) at \(s_2\) and hence \(\test(\alpha,n;a,w)=1\), we must have~\ref{it:R-t3}, which implies
\begin{enumerate}[resume]
    \item \(\bigsame(U;\gamma_{\alpha_0}(BU;a)[s],s,s_2)\).
\end{enumerate}

We observe that only \(P\)\nbd{}node can potentially enumerate a point into \(U\), and any two \(P\)\nbd{}nodes are always \(\prec\)\nbd{}comparable by Lemma~\ref{lem:comparable nodes}.
Also, no \(P\)\nbd{}node of higher global priority than \(\beta\) receives attention between \(s_2\) and \(s_1\),
because otherwise \(\beta\) would be initialized before we enumerate \(w\) into \(W\) at \(s_1\). 
Furthermore, any \(P\)\nbd{}node of lower global priority than \(\beta\) is initialized at \(s_2\). 
By (1), $\alpha\concat (n, W)$ is never visited between $s_2$ and $s_1$. Therefore, $w'$, prepared by any lower global priority $P$\nbd{}node between $s_2$ and $s_1$, must be larger than $\phi(B;z_{(n,W)})$, hence, $w' > \gamma_{\alpha_0}(BU;a)[s]$.
Therefore we have \(\bigsame(U;\gamma_{\alpha_0}(BU;a),s_2,s_1)\). Together with (6), we have
\begin{enumerate}[resume]
    \item \(\bigsame(U;\gamma_{\alpha_0}(BU;a),s,s_1)\).
\end{enumerate}




Note that \(\alpha_0\concat \infty \subseteq \Permit(W_{\alpha_0,\alpha_1})=\per(\xi)\). We have the following contradiction:
\begin{eqnarray*}
     0 = A(a)[s] &=& \Gamma_{\alpha_0}(BU;a)[s]\ \ \ (\mbox{since $s$ is $\alpha_0$\nbd{}expansionary})\\
    &=& \Gamma_{\alpha_0}(BU;a)[s_1]\ \ \ (\mbox{by (1),(3),(5) and (7)})\\
    &=& A(a)[s_1] = 1 \ \ \ (\mbox{since $s_1$ is $\alpha_0$\nbd{}expansionary}).\\
\end{eqnarray*}

\textbf{Case 2.} (This case varies only slightly.) Suppose that \(\beta\) is a \(Q(\eV,\eW)\)\nbd{}node and \(\chi(\beta;a)=\xi\in \eW\) for some \(a\) such that \(\alpha\in \cf(\xi)\), \(\dw_\beta(a)(\xi)=w\) and \(\type(\xi)=W\). For the same reason as in Case~1 we have 
\begin{enumerate}[resume]
    \item \(a<z_{(n,V)}\).
\end{enumerate}

Since \(\Test(\alpha;a,w)=1\) at \(s_2\) and hence \(\test(\alpha,n;a,w)=1\), we must have~\ref{it:R-t4}, which implies
\begin{enumerate}[resume]
    \item \(\bigsame(V;\gamma_{\alpha_1}(BV;a)[s],s,s_2)\).
\end{enumerate}

To show \(\bigsame(V;\gamma_{\alpha_1}(BV;a)[s],s_2,s_1)\), we first realize that only a \(Q(\eV',\eW')\)\nbd{}node \(\beta'\) with \(\eV'=\{\tau\}\) and \(\type(\tau)=V_{\alpha_0}\) potentially enumerates a point into the set \(V=V_{\alpha_0}\). By Lemma~\ref{lem:comparable nodes}, \(\beta\) and \(\beta'\) are \(\prec\)\nbd{}comparable. Such \(Q\)\nbd{}nodes of lower global priority than \(\beta\) are initialized at \(s_2\) and those of higher priorities, if received attention before or at the beginning of stage~\(s_1\), would initialize \(\beta\). Therefore we have \(\bigsame(V;\gamma_{\alpha_1}(BV;a)[s],s_2,s_1)\). Together with ($9$), we conclude that
\begin{enumerate}[resume]
    \item \(\bigsame(V;\gamma_{\alpha_1}(BV;a)[s],s,s_1)\)
\end{enumerate}

Note that \(\alpha_1\concat \infty\subseteq \Permit(W_{\alpha_0,\alpha_1})=\per(\xi)\). We have the following contradiction:
\begin{align*}
0 = A(a)[s] &= \Gamma_{\alpha_1}(BV;a)[s]\ \ \ (\mbox{since $s$ is $\alpha_1$\nbd{}expansionary})\\
&= \Gamma_{\alpha_1}(BV;a)[s_1]\ \ \ (\mbox{by (1), (2), (8) and (10)})\\
&= A(a)[s_1]=1\ \ \ (\mbox{since $s_1$ is $\alpha_1$\nbd{}expansionary}).  \qedhere    
\end{align*}
\end{proof}

\subsection{\texorpdfstring{Success of $S$-strategies}{success of S-strategies}}\label{sec:S strategy}
\begin{definition} [S-Condition]\label{def:S con}
    Let \(\alpha\) be an \(S(\alpha_0,\alpha_1,\alpha_2)\)\nbd{}node. For each $n\in \omega$, we say that \(S(\alpha,n)\)\nbd{}Condition holds at stage~\(t\) if either
    \begin{enumerate}
        \item \(\Psi(B;y_{(n,V)})\uparrow[t]\), or
        \item \(\Psi(B;y_{(n,V)})\downarrow[t]\) but we also have the following: (Here, \(s\le t\) is the last stage~when we defined \(\Psi(B;y_{(n,V)})\).)
        \begin{enumerate}
        \item\label{it:S con V} If \(\psi(B;y_{(n,V)})[s]<\gamma_{\alpha_0}(BU;y_{(n,V)})[s]\), then\\ 
        \(\bigsame(V;\psi(B;y_{(n,V)})[s],s,t)\).
        \item\label{it:S con UV} If \(\psi(B;y_{(n,V)})[s] \geq \gamma_{\alpha_0}(BU;y_{(n,V)})[s]\), then either \\
        \(\bigsame(V;\psi(B;y_{(n,V)})[s],s,t)\) or \(\bigsame(U;\gamma_{\alpha_0}(BU;y_{(n,V)})[s],s,t)\).
        \end{enumerate}
    \end{enumerate}
    We say that \(S(\alpha)\)\nbd{}Condition holds if \(S(\alpha,n)\)\nbd{}Condition holds for each \(n\).
\end{definition}


\begin{remark*}
The objective of the S-condition is to guarantee the correctness of the value of $\Psi(B; y_{(n,V)})$.
This is done by ensuring $\psi(B;y)$ is bigger than either $\gamma_{\alpha_0}(BU;y_{(n,V)})[s]$ or
$\gamma_{\alpha_1}(BV;y_{(n,V)})[s]$. 
In the case of Definition~\ref{def:S con}(\ref{it:S con V}), we have no choice but to prevent \(V\) from changing, whereas in the case of~(\ref{it:S con UV}) we are more flexible and we can ``preserve'' either $U$\nbd{}side or $V$\nbd{}side, a situation similar to the minimal pair construction. 
\end{remark*}

By a similar argument as in Lemma~\ref{lem:R correct}, one can prove the following:
\begin{lemma} [S-Correctness]\label{lem:S correct}
    Let \(\alpha\) be an \(S(\alpha_0,\alpha_1,\alpha_2)\)\nbd{}node. Assume that $S(\alpha)$\nbd{}Condition holds at every stage.  
    Then for each \(\alpha\)\nbd{}stage and each \(n\le s\), if \(\Psi(B;y_{(n,V)})\downarrow[s]\), then \(\Psi(B;y_{(n,V)})\downarrow[s]=A(x)[s]\).
\end{lemma}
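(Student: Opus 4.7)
The proof would closely mirror Lemma~\ref{lem:R correct}, with the extra wrinkle that the \(S\)\nbd{}Condition offers two distinct ways of preserving the computation \(\Psi(B;y_{(n,V)})\). First, fix an \(\alpha\)\nbd{}stage~\(s\) and \(n\le s\) with \(\Psi(B;y_{(n,V)})\downarrow[s]\), and let \(s_0\le s\) be the last \(\alpha\)\nbd{}stage at which \(\Psi(B;y_{(n,V)})\) was (re)defined. Inspecting \(\visit(\alpha)\) from Subsection~\ref{sec:S node} at \(s_0\) one reads off:
\begin{enumerate}
    \item \(\Psi(B;y_{(n,V)})[s_0]=A(y_{(n,V)})[s_0]\);
    \item \(\psi(B;y_{(n,V)})[s_0]>\gamma_{\alpha_1}(BV;y_{(n,V)})[s_0]\);
    \item the use has not been redefined in \((s_0,s]\), so \(\Psi(B;y_{(n,V)})[s]=\Psi(B;y_{(n,V)})[s_0]\) and \(\bigsame(B;\psi(B;y_{(n,V)})[s_0],s_0,s)\).
\end{enumerate}

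Next, invoke the hypothesis that \(S(\alpha,n)\)\nbd{}Condition holds at \(s\). Since \(\Psi(B;y_{(n,V)})\downarrow[s]\), clause~(2) of Definition~\ref{def:S con} applies, producing two sub-cases. In the first sub-case, \(\bigsame(V;\psi(B;y_{(n,V)})[s_0],s_0,s)\); combined with~(ii) and~(iii) this preserves all the oracle information used by \(\Gamma_{\alpha_1}(BV;y_{(n,V)})\), so the chain
\[
A(y_{(n,V)})[s]=\Gamma_{\alpha_1}(BV;y_{(n,V)})[s]=\Gamma_{\alpha_1}(BV;y_{(n,V)})[s_0]=A(y_{(n,V)})[s_0]=\Psi(B;y_{(n,V)})[s_0]=\Psi(B;y_{(n,V)})[s]
\]
closes the case, where the first and third equalities use that both \(s_0\) and \(s\) are \(\alpha_1\)\nbd{}expansionary. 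In the second sub-case, \(\psi(B;y_{(n,V)})[s_0]\ge \gamma_{\alpha_0}(BU;y_{(n,V)})[s_0]\) together with \(\bigsame(U;\gamma_{\alpha_0}(BU;y_{(n,V)})[s_0],s_0,s)\) gives the analogous preservation for \(\Gamma_{\alpha_0}(BU;y_{(n,V)})\), and the identical chain with \(\alpha_0\) in place of \(\alpha_1\) finishes the proof.

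The only non-routine verification is the implicit claim that every \(\alpha\)\nbd{}stage is simultaneously \(\alpha_0\)\nbd{} and \(\alpha_1\)\nbd{}expansionary, which I would justify once by appealing to the inclusion \(\alpha_0\concat\infty\subseteq \alpha_1\concat\infty\subseteq \alpha\) on the priority tree and the \(G\)\nbd{}node visiting convention from Subsection~\ref{sec:G node}. The main obstacle, such as it is, is the second sub-case: one has to notice that \(B\)\nbd{}preservation at length \(\psi\ge \gamma_{\alpha_0}\) is already for free from~(iii), so the \(U\)\nbd{}branch of the \(S\)\nbd{}Condition really is enough to salvage the computation without any extra hypothesis about \(V\). (One should also read \(A(x)\) in the statement as a typographical slip for \(A(y_{(n,V)})\); otherwise the equality would not even be type-correct.)
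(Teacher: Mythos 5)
Your proposal is correct and follows essentially the same approach as the paper's proof: isolate the last definition stage~\(s_0\), extract the three facts (correctness at \(s_0\), \(B\)\nbd{}preservation, use bigger than \(\gamma_{\alpha_1}\)), invoke the two branches of the \(S\)\nbd{}Condition, and close each by the expansionary-stage chain of equalities — using \(\alpha_1\) in the \(V\)\nbd{}preservation case and \(\alpha_0\) in the \(U\)\nbd{}preservation case, with the observation that \(B\)\nbd{}preservation at length \(\psi\ge\gamma_{\alpha_0}\) comes for free from the \(\Psi\)\nbd{}use. You also correctly spot that \(A(x)\) in the statement is a typo for \(A(y_{(n,V)})\).
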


\begin{proof}
    Fix an $\alpha$\nbd{}stage~\(s\) and a number $n$ such that \(\Psi(B;y_{(n,V)})[s]\) is defined. 
    Let \(s_0<s\) be the last $\alpha$\nbd{}stage when we defined \(\Psi(B;y_{(n,V)})\). 
    Therefore we have
    \begin{enumerate}
        \item \(\Psi(B;y_{(n,V)})[s_0]=A(y_{(n,V)})[s_0]\),
        \item \(\bigsame(B;\psi(B;y_{(n,V)})[s_0],s_0,s)\), and
        \item \(\psi(B;y_{(n,V)})[s]=\psi(B;y_{(n,V)})[s_0]>\gamma_{\alpha_1}(BV;y_{(n,V)})[s_0]\).
    \end{enumerate}
    By the assumption that \(S(\alpha,n)\)\nbd{}Condition holds, either Definition~\ref{def:S con}(\ref{it:S con V}) or (\ref{it:S con UV}) holds. 
    Let us first prove the former case, where we have
    \begin{enumerate}[resume]
        \item \(\bigsame(V;\psi(B;y_{(n,V)})[s_0],s_0,s)\).
    \end{enumerate}
    Hence we have
    \begin{eqnarray*}
     \Psi(B;y_{(n,V)})[s]& = & \Psi(B;y_{(n,V)})[s_0] \ \ \ (\mbox{by (2)})\\
     &=& A(y_{(n,V)})[s_0] \ \ \ (\mbox{by (1)})\\
     &=& \Gamma_{\alpha_1}(BV;y_{(n,V)})[s_0]\ \ (\mbox{since \(s_0\) is \(\alpha_1\)\nbd{}expansionary})\\
     &=& \Gamma_{\alpha_1}(BV;y_{(n,V)})[s]\ \ \ (\mbox{by (2), (3) and (4)})\\
     &=& A(y_{(n,V)})[s] \ \ \ (\mbox{since \(s\) is \(\alpha_1\)\nbd{}expansionary}).
    \end{eqnarray*}
    
    For the latter case Definition~\ref{def:S con}(\ref{it:S con UV}), we have
    
    \begin{enumerate}[resume]
        \item \(\bigsame(V;\psi(B;y_{(n,V)})[s_0],s_0,s)\), or 
        \item \(\bigsame(U;\gamma_{\alpha_0}(BU;y_{(n,V)})[s_0],s_0,s)\).
    \end{enumerate}
    
    In the case of (5), the proof is exactly the same as above. In the case of (6), we have 
    \begin{align*}
    \Psi(B;y_{(n,V)})[s]& =\Psi(B;y_{(n,V)})[s_0] \ \ \ (\mbox{by (2)})\\
    &=A(y_{(n,V)})[s_0] \ \ \ (\mbox{by (1)})\\
    &=\Gamma_{\alpha_0}(BU;y_{(n,V)})[s_0]\ \ (\mbox{since \(s_0\) is \(\alpha_0\)\nbd{}expansionary})\\
    &=\Gamma_{\alpha_0}(BU;y_{(n,V)})[s]\ \ \ (\mbox{by (2), (6) and 
    Definition~\ref{def:S con}(\ref{it:S con UV})})\\
    &=A(y_{(n,V)})[s] \ \ \ (\mbox{since \(s\) is \(\alpha_0\)\nbd{}expansionary}). \qedhere    
    \end{align*}
\end{proof}

\begin{lemma} [S-Condition Satisfaction]\label{lem:S con}
    For each \(S\)\nbd{}node \(\alpha\) and each stage~\(t\), \(S(\alpha)\)\nbd{}Condition holds.
\end{lemma}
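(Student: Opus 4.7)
The plan is to prove the lemma by contradiction, following the template of Lemma~\ref{lem:R con} but with the disarming processes of Subsection~\ref{sec:disarm} and the pair-based local dynamic priority of Subsection~\ref{sec:C node} playing the role that the \(\Test\) function plays in the \(R\)-case. Let \(\xi\) be the unique \(C(\alpha_0,\alpha_3,\alpha_4)\)\nbd{}node with \(\xi\concat\omega\subseteq\alpha\) given by Lemma~\ref{lem:S locate C}. Assume for contradiction that \(S(\alpha,n)\)\nbd{}Condition fails at stage~\(t\), and let \(s<t\) be the last \(\alpha\)\nbd{}stage when \(\Psi(B;y_{(n,V)})\) was defined. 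Then \(\Psi(B;y_{(n,V)})\downarrow[t]\) gives \(\bigsame(B;\psi[s],s,t)\), and by construction \(\psi[s]>\gamma_{\alpha_1}(BV;y_{(n,V)})[s]\).

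Under Definition~\ref{def:S con}(\ref{it:S con V}), we have \(\psi[s]<\gamma_{\alpha_0}(BU;y_{(n,V)})[s]\) and some \(v\le\psi[s]\) enters \(V=V_{\alpha_0}\) at the first stage \(s_1\in(s,t]\) via a \(Q\)\nbd{}node \(\beta\) receiving attention at some \(s_2\le s_1\), just as in Lemma~\ref{lem:R con}. The permanence of \(\alpha\) across \((s,t]\), together with Lemma~\ref{lem:comparable nodes}, Definition~\ref{def:local priority} and Convention~\ref{convention: init}, forces \(\beta\) to extend some \(\alpha\concat(n',V)\) and to be paired with some \(\xi\concat m\) rather than to initialize \(\alpha\) outright. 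Using the pairing inequality \(x_{(m,U)}>\tp(\beta)\ge y_{(n',V)}\) from~(\ref{it:Q1}) of Subsection~\ref{sec:Q node}, the index-monotonicity of the edge parameters of Section~\ref{sec:parameter}, and the monotonicity of \(\gamma_{\alpha_0}\), the hypothesis stated at index \(n\) is converted into the trigger condition of~\ref{it:DP-4} for the \((\xi\concat m,\alpha\concat n',\beta)\)\nbd{}DisarmingProcess, so \(\beta\) is disarmed at or before \(s_2\), contradicting a live pair at \(s_2\).

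Under Definition~\ref{def:S con}(\ref{it:S con UV}), we have \(\psi[s]\ge\gamma_{\alpha_0}(BU;y_{(n,V)})[s]\) and \emph{both} \(V\)- and \(U\)-changes occur in \((s,t]\), witnessed by a \(Q\)\nbd{}node \(\beta_V\) (paired with some \(\xi\concat m\)) and a \(P\)\nbd{}node \(\beta_U\), each receiving attention at some stage in \((s,t]\). I analyze the two possible orderings of their attention-events. If \(\beta_U\) acts first, then in the delicate subcase \(\beta_U\in\cT[\xi,\Sigma_3]\) and \(\beta_V\in\cT[\xi,\Pi_3]\), the survival of the pair forces \(\beta_U\) to extend some \(\xi\concat l\) with \(l\le m\), which invokes~\ref{it:DP-1} and disarms \(\beta_V\). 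If \(\beta_V\) acts first, the subsequent change of \(U\) below \(\gamma_{\alpha_0}(BU;x_{(m,U)})\), together with the already-established \(\bigsame(B;\psi[s],s,t)\), satisfies~\ref{it:DP-2} and again disarms \(\beta_V\) before its enumeration takes effect. Simultaneous attention is ruled out by the uniqueness remark in Subsection~\ref{sec:permitting center}.

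The main obstacle I anticipate is the index bookkeeping in the first subclause: relating the hypothesis stated at the lemma's index~\(n\) to the disarming condition stated at the index~\(n'\) of the outcome that actually hosts \(\beta\). The monotonicity comparisons do not close cleanly when \(n\neq n'\), so one must use the edge-parameter update rules of Section~\ref{sec:parameter} together with the stability clause~\ref{it:DP-5} (which keeps \(x_{(m,U)}\) constant while the pair is live) to rule out the possibility that the relevant \(y\)-parameter has grown past \(x_{(m,U)}\) between pair-establishment and stage~\(s\). A secondary technical point is verifying that the pair \((\xi\concat m,\beta_V)\) in the second subclause remains live throughout the interval, which follows from~\ref{it:DP-3} and the cancellation rules~\ref{it:p1}--\ref{it:p3} of Subsection~\ref{sec:a pair}.
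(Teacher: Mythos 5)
Your proposal mis-identifies the central mechanism. You announce that you will let ``the disarming processes\,\ldots\,play the role that the \(\Test\) function plays in the \(R\)-case,'' but in the paper's proof of Lemma~\ref{lem:S con} the \(\Test\) function (Definition~\ref{def:R test}) and the choice function \(\chi\) of the permitting center (Subsection~\ref{sec:permitting center}, item~(\ref{it:pc 2})) are used \emph{directly} and indispensably, alongside the disarming processes. Disarming alone cannot give the conclusion, because a \(Q\)- or \(P\)-node \(\beta\) that receives attention but is not disarmed is still free to enumerate into \(V\) or \(U\); the mechanism that protects \(V\) (resp.\ \(U\)) in that situation is that the choice function \(\chi(\beta;a)\) is \emph{rerouted} to some \(\sigma\in E_{\mathbf W}\), sending the witness into \(W\) rather than \(V\) (resp.\ \(U\)). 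Concretely, in Case~1 the paper's Claim~\ref{cl:1} uses \ref{it:DP-4} only to rule out \(m=n\); for \(m<n\) the offending \(Q\)-node is \emph{not} disarmed, and Claim~\ref{cl:2} shows instead that \(\Test(\eta;a,w)=1\) for every \(\eta\in\cf(\alpha_2)\), so \(\chi(\beta;a)\subseteq\alpha_2\) and the witness goes into \(W\), not \(V\). Your attempt to also disarm \(\beta\) in the \(m<n\) regime via \ref{it:DP-4} runs into exactly the index mismatch you flag at the end: the pair \((\xi\concat m,\beta)\) has \(x_{(m,U)}>\tp(\beta)\ge y_{(n',V)}\) with \(n'<n\), and neither that bound nor \ref{it:DP-5}-style stability lets you compare \(x_{(m,U)}\) with \(y_{(n,V)}\), so the hypothesis \(\psi(B;y_{(n,V)})[s]<\gamma_{\alpha_0}(BU;y_{(n,V)})[s]\) does not convert into the \ref{it:DP-4} trigger. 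That gap is not a technical annoyance; it is irreparable without the \(\Test\)/\(\chi\) mechanism.

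Your Case~2(b) has a second, independent error. After assuming \(V\) changes first, the paper shows that \(U\) \emph{never} changes below the relevant use, by tracking the \(P\)-node \(\iota\) that would be responsible (Claims~\ref{cl:3} and \ref{cl:4}) and then showing \(\Test(\eta;b,w)=1\) for \(\eta\in\cf(\alpha_4)\), so \(\chi(\iota,b)\subseteq\alpha_4\) and \(\iota\) enumerates into \(W\), not \(U\). Your version instead argues that a ``subsequent change of \(U\)'' would via \ref{it:DP-2} ``disarm \(\beta_V\) before its enumeration takes effect''---but \(\beta_V\) already enumerated (that is what ``acts first'' means), and in any case disarming \(\beta_V\) does nothing to prevent the \(U\)-change from happening, which is the thing you must actually rule out. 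Finally, in your Case~2(a), \ref{it:DP-2} (triggered by a \(U\)-change observed at the \(\Permit(U)\)-node), not \ref{it:DP-1}, is what the paper invokes, and even after the disarming of the paired node the argument falls back on the Claim~\ref{cl:1}/Claim~\ref{cl:2} routing argument for \(Q\)-nodes below \(\alpha\concat(m,V)\) with \(m<n\). So the proposal, as written, proves neither subcase.
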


\begin{proof}
Fixing an \(n\), we show that \(S(\alpha,n)\)\nbd{}Condition holds at stage~\(t\). 

    Suppose that \(\alpha\) is \(S(\alpha_0,\alpha_1,\alpha_2)\)\nbd{}node, where \(\alpha_0\) is the \(G(U)\)\nbd{}node, \(\alpha_1\) is the \(G(V_{\alpha_0})\)\nbd{}node and \(\alpha_2\) is the \(D(W_{\alpha_0,\alpha_1})\)\nbd{}node. We write \(V=V_{\alpha_0}\) and \(W=W_{\alpha_0,\alpha_1}\). Let \(s\le t\) be the last \(\alpha\)\nbd{}stage when we defined \(\Psi(B;y_{(n,V)})\) (and \(\alpha\) has not been initialized between \(s\) and \(t\)). Since \(S(\alpha,n)\)\nbd{}Condition trivially holds if \(\Psi(B;y_{(n,V)})\uparrow[t]\), we assume \(\Psi(B;y_{(n,V)})\downarrow[t]\). We have
    \begin{enumerate}
        \item\label{it:s1} \(\psi(B;y_{(n,V)})[s]>\gamma_{\alpha_1}(BV;y_{(n,V)})[s]\) and
        \item\label{it:s2} \(\bigsame(B;\psi(B;y_{(n,V)})[s],s,t)\).
    \end{enumerate}
    As a consequence of~(\ref{it:s2}), we do not visit \(\alpha\concat (m,V)\) for \(m\le n\) between \(s\) and \(t\).
    By Lemma~\ref{lem:S locate C}, there exists the unique \(C(\alpha_0,\alpha_3,\alpha_4)\)\nbd{}node \(\xi\) for some \(\alpha_3\) and \(\alpha_4\) with \(\xi\concat \omega\subseteq \alpha\). We list, if any, all paired \(Q\)\nbd{}nodes extending \(\alpha\concat (n, V)\) as
    \[
        \beta_0\prec \beta_1 \prec\cdots\prec\beta_{k-1}.
    \]
    Let \((\xi\concat n_i,\beta_i)\) be the established pair for each \(i<k\). 
     By the definition of pairing parameter (Section~\ref{sec:parameter} and Subsection~\ref{sec:Q node}), we have
    \begin{enumerate}[resume]
        \item\label{it:s3} \(y_{(n,V)} \le \tp(\beta_0) < x_{(n_0,U)}[s]<x_{(n_1,U)}[s]<\cdots < x_{(n_{k-1},U)}[s]\).
    \end{enumerate}
    Corresponding to clauses~(\ref{it:S con V}) and~(\ref{it:S con UV}) in \(S(\alpha,n)\)\nbd{}Condition (see Definition~\ref{def:S con}), our proof splits into two cases.

    \textbf{Case 1.} Suppose \(\psi(B;y_{(n,V)})[s]<\gamma_{\alpha_0}(BU;y_{(n,V)})[s]\).
    We show \(\bigsame(V;\psi(B;y_{(n,V)})[s],s,t)\). Let \(s_1\) be the least stage between \(s\) and \(t\) at which some number \(a\) enters \(A\) and some \(Q\)\nbd{}node \(\beta\) receives attention. 
    The goal is to show that \(\beta\) will not enumerate any point less than \(\psi(B;y_{(n,V)})[s]\) into \(V\). 
    First of all, by the choice of \(s_1\), we have 
    \begin{enumerate}[resume]
        \item\label{it:s4} \(\bigsame(V;\psi(B;y_{(n,V)})[s],s,s_1)\).
    \end{enumerate}
    We may assume that \(\beta\) extends \(\alpha\concat (m,V)\) for some \(m\) as other cases are simpler. Suppose that \(w=\dw_\beta(a)(\alpha_2)\) and \(v=\dw_\beta(a)(\beta^-)<\psi(B;y_{(n,V)})[s]\) are two diagonalizing witnesses of \(\beta\). 
    \begin{claim}\label{cl:1}
        \(m<n\).
    \end{claim}
    \begin{proof}[Proof of Claim~\ref{cl:1}]
        If \( m > n\), the diagonalizing witnesses (particularly, \(v\)) of \(\beta\) is larger than \(\psi(B;y_{(n, V)})[s]\), which contradicts our assumption.
        
        Now we show \(m\neq n\).
        We have \(\psi(B;y_{(n,V)})[s] <\gamma_{\alpha_0}(BU;y_{(n,V)})[s] <\gamma_{\alpha_0}(BU;x_{(n_0,U)})[s]\) by the assumption of Case~1 and~(\ref{it:s3}). By \((\xi\concat n_0,\alpha\concat n,\beta_0)\)\nbd{}DisarmingProcess~\ref{it:DP-4}, \(\beta_0\) has been disarmed by $s$ (and \(\beta_i\) for \(i>0\) are also initialized). 
        As we do not visit $\alpha\concat (n,V)$ between $s$ and $t$, we have \(m \neq n\).
        
    \end{proof}
    By Claim~\ref{cl:1} and the definition of \(y_{(n,V)}\) (see Section~\ref{sec:parameter}), we have \(a<y_{(n,V)}\). Therefore we have \(\psi(B;y_{(n,V)})[s]>\gamma_{\alpha_1}(BV;y_{(n,V)})[s]>\gamma_{\alpha_1}(BV;a)[s]\) by~(\ref{it:s1}). By~(\ref{it:s2}) and~(\ref{it:s4}) respectively, we have 
    \begin{enumerate}[resume]
        \item\label{it:s5} \(\bigsame(B;\gamma_{\alpha_1}(BV;a)[s],s,s_1)\), and
        \item\label{it:s6} \(\bigsame(V;\gamma_{\alpha_1}(BV;a)[s],s,s_1)\).
    \end{enumerate}
    
    \begin{claim}\label{cl:2}
        For each \(\eta\in \cf(\alpha_2)\), \(\Test(\eta;a,w)=1\).
    \end{claim}
    \begin{proof}[Proof of Claim~\ref{cl:2}]
        Let \(l\) be such that \(\eta\concat (l,W)\subseteq\alpha_2\). It suffices to show \(\test(\eta,l;a,w)=1\) by Lemma~\ref{lem:test}.
        As it is an $\alpha$\nbd{}stage, $s$ is also an $\eta\concat (l, W)$\nbd{}stage, which implies \(\Phi_\eta(B;z_{(l,W)})\uparrow[s]\).
        If \(\Phi_\eta(B;z_{(l,W)})\uparrow[s_1]\), then $\test(\eta,l;a,w)[s_1] = 1$ by Definition~\ref{def:R test}~\ref{it:R-t1}. Otherwise, there exists some stage~$s^*$ such that $s < s^* < s_1$ and $s^*$ is the last stage that \(\Phi_\eta(B;z_{(l,W)})\downarrow[s^*]\). With~(\ref{it:s5}) and~(\ref{it:s6}), we conclude \(\test(\eta,l;a,w)[s_1]=1\) by Definition~\ref{def:R test} (R-t4).    
    \end{proof}
    By Claim~\ref{cl:2} we have \(\bD(\beta,a)[s_1]\subseteq \alpha_2\). In particular, \(\bD(\beta,a)[s_1]\neq \beta^-\) and therefore \(\beta\) will not enumerate \(v\) into \(V\). Hence we have \(\bigsame(V;\psi(B;y_{(n,V)})[s],s,t)\), which completes the proof of Case 1.
    

     
    \textbf{Case 2.} Suppose \(\psi(B;y_{(n,V)})[s]\ge \gamma_{\alpha_0}(BU;y_{(n,V)})[s]\). 
    Depending on whether \(U\) or \(V\) changes first, we have two subcases.
    
    \textbf{Case 2(a).} \(U\) changes first. We are to show \(\bigsame(V;\psi(B;y_{(n,V)})[s],s,t)\). First of all, let~\(s_1\) be the least stage between \(s\) and \(t\) such that \(\bigsame(V;\psi(B;y_{(n,V)}),s,s_1)\) and \(\lnot\,\bigsame(U;\gamma_{\alpha_0}(BU,y_{(n,V)})[s],s,s_1)\). As we have \(y_{(n,V)} < x_{(n_0, U)}\) by~(\ref{it:s3}), we have \(\lnot\,\bigsame(U;\gamma_{\alpha_0}(BU,x_{(n_0, U)})[s],s,s_1)\), which triggers \((\xi\concat n_0, \alpha\concat n,\beta_0)\)\nbd{}DisarmingProcess~\ref{it:DP-2} at \(s_1\) and therefore \(\beta_0\) is disarmed (and \(\beta_i\) for \(i>0\) are initialized).
    Let \(\beta\) be a \(Q\)\nbd{}node receiving attention (say, \(a\) enters \(A\)) between \(s\) and \(t\), extending \(\alpha\concat (m,V)\) for some \(m\). By an argument similar to Claim~\ref{cl:1} and Claim~\ref{cl:2}, we conclude \(\bD(\beta,a)\subseteq\alpha_2\). Hence  \(\bigsame(V;\psi(B;y_{(n,V)})[s],s,t)\).
    
    
    
    \textbf{Case 2(b).} \(V\) changes first. We are to show \(\bigsame(U;\gamma_{\alpha_0}(BU;y_{(n,V)})[s],s,t)\). Let \(s_1\) be the least stage between \(s\) and \(t\) such that \(\bigsame(U;\gamma_{\alpha_0}(BU;y_{(n,V)})[s],s,s_1)\) and \(\lnot\,\bigsame(V;\psi(B;y_{(n,V)}),s,s_1)\). We first examine what happens at \(s_1\). Let \(\beta\) be the \(Q\)\nbd{}node who is responsible for this change. That is, \(\beta\) receives attention (say, \(a\) enters \(A\)) at \(s_2\le s_1\) with \(\bD(\beta,a)=\beta^-\) and therefore enumerates its diagonalizing witness \(v=\dw_\beta(a)(\beta^-)<\psi(B;y_{(n,v)})\) into \(V\) at \(s_1\).
    
    \begin{claim}\label{cl:3}
    \(\beta\) extends \(\alpha \concat (n,V)\).
    \end{claim}
    \begin{proof}[Proof of Claim~\ref{cl:3}]
    Suppose that \(\beta\) extends \(\alpha\concat (m,V)\) for some \(m\). If \(m>n\), then we have \(v>\psi(B;y_{(n,v)})\), contradicting the assumption. If \(m<n\), by a similar argument in Claim~\ref{cl:2}, we conclude \(\bD(\beta,a)\subseteq\alpha_2\), contradicting \(\bD(\beta,a)=\beta^-\).
    \end{proof}
    
    By Claim~\ref{cl:3}, we have \(\beta=\beta_j\) for some \(j<k\). Therefore, we have
    \begin{enumerate}[resume]
        \item\label{it:s7} \(\gamma_{\alpha_0}(BU;x_{(n_j, U)})[s] \leq \psi(B;y_{(n,V)}) [s]\) (as otherwise, the \((\xi\concat n_j, \alpha\concat n,\beta_j)\)\nbd{}DisarmingProcess~\ref{it:DP-4} would have disarmed \(\beta_j\) by \(s\));
        \item\label{it:s7b} \(\bigsame(B;\gamma_{ \alpha_0}(BU; x_{(n_j,U)})[s], s, t)\) (by~(\ref{it:s2}) and~(\ref{it:s7})); and
        \item\label{it:s8} all P-nodes below \(\xi\concat n_j\) are initialized at \(s_1\) by Lemma~\ref{lem:cleared of P}.
    \end{enumerate}
    
    Let \(s_3\) be the least stage between \(s\) and \(t\) such that \(b\) enters \(A\) and some P\nbd{}node \(\iota\) receives attention with diagonalizing witnesses \(w=\dw_\iota(b)(\alpha_4)\) and \(u=\dw_\iota(b)(\iota^-)<\gamma_{\alpha_0}(BU;x_{(n_j,U)})\). 
    Suppose that \(\iota\) extends \(\xi\concat l\) for some \(l\). 
    We have 
    \begin{claim}\label{cl:4}
        \(l<n_j\) and \(s_3>s_1\).
    \end{claim}
    \begin{proof}[Proof of Claim~\ref{cl:4}]
        If \(l> n_j\), then we have \(u>\gamma_{\alpha_0}(BU;x_{(n_j,U)})\) contradicting the assumption. If \(l\le n_j\) and \(s_3\le s_1\), then \(\beta=\beta_j\) would be disarmed by \((\xi\concat n_j, \alpha\concat n,\beta_j)\)\nbd{}DisarmingProcess~\ref{it:DP-1} at \(s_3\). If \(l=n_j\) and \(s_3>s_1\), we have a contradiction as \(\iota\) would have been initialized at \(s_1\) by~(\ref{it:s8}) (and~(\ref{it:s7b}) prevents us from visiting \(\iota\) after~\(s_1\)). Therefore, we have \(l<n_j\) and \(s_3>s_1\). 
    \end{proof}
    By Claim~\ref{cl:4}, we have \(b<x_{(n_j,U)}\) and \(\gamma_{\alpha_0}(BU;b)[s]<\gamma_{\alpha_0}(BU;x_{(n_j,U)})[s]\). Now we have
    \begin{enumerate}[resume]
        \item\label{it:s9} \(\bigsame(B; \gamma_{\alpha_0}(BU; b)[s], s, s_3)\) (by~(\ref{it:s7b})), and
        \item\label{it:s10b} \(\bigsame(U; \gamma_{\alpha_0}(BU; b)[s], s, s_3)\) (by the choice of \(s_3\)).
    \end{enumerate}
    From~(\ref{it:s9}) and~(\ref{it:s10b}), following the argument in Claim~\ref{cl:2}, we have \(\Test(\eta;b,w)=1\) for each \(\eta\in \cf(\alpha_4)\). Hence, we have \(\bD(\iota,b)\subseteq \alpha_4\). In particular, \(\bD(\iota,b)\neq \iota^-\) and therefore \(\iota\) will not enumerate \(u\) into \(U\). Hence we have \(\bigsame(U;\gamma_{\alpha_0}(BU;x_{(n_j,U)})[s],s,t)\). By~(\ref{it:s3}), we have \(x_{(n_j,U)}>y_{(n,V)}\). Hence we have \(\bigsame(U;\gamma_{\alpha_0}(BU;y_{(n,V)})[s],s,t)\). This completes the proof of Case~2(b) and the proof of this lemma.
\end{proof}

\subsection{Proof of the main theorem}\label{sec:proof of main}
The goal is to show that \(\cT^*\) has an infinite path. The following notations will be convenient in the proof.
For a global parameter \(x\), we say that \(x\) is \emph{stable} after stage~\(s\) if for each \(t\ge s\), \(x[t]=x[s]\). A pair \((\xi\concat n, \beta)\) is \emph{stable} after stage~\(s\) if it will not get canceled after stage~\(s\). If \(x_{(n,U)}\) is stable after \(s\), then \(\gamma(BU;x_{(n,U)})\) is \emph{stable} after \(s_0>s\) if for each \(t>s_0\), \(\gamma(BU;x_{(n,U)})[t]=\gamma(BU;x_{(n,U)})[s_0]\). \(\gamma(BV;y_{(n,V)})\) or \(\gamma(BW;z_{(n,X)})\) being \emph{stable} are defined similarly. A set \(X\res l\) is \emph{stable} after \(s_0\) if for each \(s>s_0\) we have \(\bigsame(X,l,s_0,s)\). 

By Lemma~\ref{lem:R correct} and~\ref{lem:S correct}, we have the following
\begin{lemma}\label{lem:T star}
    Let \(\alpha\in \cT^*\) be a node.
    \begin{enumerate}
        \item Suppose that \(\alpha\) is a \(\Permit(X)\)\nbd{}node. Then \(X\le_T A\).

        \item Suppose that \(\alpha\) is a \(G_e(X)\)\nbd{}node and \(\alpha\concat 0\in T^*\). Then \(\Gamma_e(BX)\neq A\).
        
        \item Suppose that \(\alpha\) is a \(P\)\nbd{}node. Then \(A\) is recursive.
        
        \item Suppose that \(\alpha\) is a \(Q\)\nbd{}node. Then \(A\) is recursive.
        
        \item Suppose that \(\alpha\) is a \(C(\alpha_0,\alpha_1,\alpha_2)\)\nbd{}node and \(\alpha\concat n\in \cT^*\). Then \(\Gamma_{\alpha_0}(BU;x_{(n,U)})\uparrow\).
        
        \item Suppose that \(\alpha\) is an \(R(\alpha_0,\alpha_1,\alpha_2)\)\nbd{}node. Let \(V=V_{\alpha_0}\) and \(W=W_{\alpha_0,\alpha_1}\). If \((n,X)\) is the leftmost outcome that is visited infinitely often, then \(\Gamma_{\alpha_i}(BX;x_{(n,X)})\uparrow\) where \(i=0,1,2\) if \(X=U,V,W\) respectively; if all outcomes are visited finitely often, then \(A\le_T B\).
        
        \item Suppose that \(\alpha\) is an \(R^-(\alpha_0,\alpha_1,\alpha_2)\)\nbd{}node. Let \(V=V_{\alpha_0}\) and \(W=W_{\alpha_0,\alpha_1}\). 
        If \((n,X)\) is the leftmost outcome that is visited infinitely often, then \(\Gamma_{\alpha_i}(BX;x_{(n,X)})\uparrow\) where \(i= 1,2\) if \(X= V,W\) respectively; 
        if all outcomes are visited finitely often, then \(A\le_T B\).
        
        \item Suppose that \(\alpha\) is an \(S(\alpha_0,\alpha_1,\alpha_2)\)\nbd{}node. Let \(V=V_{\alpha_0}\). If \((n,V)\) is the leftmost outcome that is visited infinitely often, then \(\Gamma_{\alpha_1}(BV;y_{(n,V)})\uparrow\); if all outcomes are visited finitely often, then \(A\le_T B\).
        
        \item Suppose that \(\alpha\) is a \(D(X)\)\nbd{}node. If the leftmost outcome that is visited infinitely often is a \(d\)\nbd{}outcome or a \(w\)\nbd{}outcome, then \(\Delta_\alpha\neq X\). \qed
    \end{enumerate} 
\end{lemma}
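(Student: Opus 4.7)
The plan is to prove each of items (1)--(9) separately by case analysis on the type of $\alpha\in\cT^*$, exploiting the defining property of $\cT^*$ (visited infinitely often, injured only finitely often) together with the correctness lemmas already established for $R$- and $S$-nodes. Items (2), (5) and (9) should be essentially immediate from the outcome definitions: (2) follows because $\alpha\concat 0\in\cT^*$ forces only finitely many expansionary stages, bounding the length of agreement; (5) follows because every visit to $\alpha\concat n$ records a fresh increase of $\gamma_{\alpha_0}(BU;x_{(n,U)})$, and $x_{(n,U)}$ stabilizes after some stage since all outcomes strictly to the left of $(n,U)$ are visited only finitely often; and (9) follows because $\alpha\concat d$ becomes activated only after we enumerated some witness $x$ into $X$ with $\Delta_\alpha(x)=0$, and activation is permanent.

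For (1), I would argue that each enumeration into $X$ at a visit to $\Permit(X)$ is triggered by an $A$-change at the permitting center, with the witness previously chosen fresh and only enumerated at the next $\Permit(X)$-visit. Hence $X(n)$ can be decided from $A$ by running the construction until either $n$ enters $X$ or the appropriate initial segment of $A$ stabilizes. For (3) and (4), suppose $\beta$ is a $P$- (respectively $Q$-) node in $\cT^*$ and let $s^*$ be the least stage after which $\beta$ is never injured. Each subsequent visit either defines a fresh $\dw_\beta(a)$, waits for it to become prepared, or sets $\Theta(a)=A(a)$; so $\Theta$ becomes total on $[s^*,\infty)$. If some $a\ge s^*$ with $\Theta(a)=0$ later enters $A$, then $\beta$ would receive attention at the permitting center; by Lemma~\ref{lem:R con} combined with the disarming discipline, a valid $\chi(\beta;a)$ exists, and since $\beta$ is never again injured after $s^*$ the chosen witness is not discarded before it is enumerated by its $\per$-node, activating the corresponding $d$-outcome and permanently shifting the true path left of $\beta$ -- contradicting $\beta\in\cT^*$. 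Therefore $\Theta\equiv A$ on $[s^*,\infty)$ and $A$ is recursive.

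For (6)--(8), I would combine the correctness lemmas (Lemma~\ref{lem:R correct} for $R$- and $R^-$-nodes, Lemma~\ref{lem:S correct} for $S$-nodes) with the edge-parameter dynamics. If all outcomes of the node are visited only finitely often, then every $\Phi(B;z_{(n,W)})$ (respectively $\Psi(B;y_{(n,V)})$) stabilizes with the correct value of $A$ at the corresponding parameter, giving a total $B$-computation of $A$, so $A\le_T B$. If instead $(n,X)$ is the leftmost outcome visited infinitely often, the relevant edge parameter stabilizes and every visit to that outcome is certified by a fresh increase of the appropriate $\gamma$-use, forcing $\Gamma_{\alpha_i}(BX;\,\cdot\,)$ to diverge at the stabilized parameter, which is exactly the conclusion.

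The main obstacle I anticipate lies in (3) and (4): namely, to verify that when $\beta$ receives attention on $a$, \emph{some} $\sigma\in\eU\cup\eW$ (respectively $\eV\cup\eW$) actually passes all $R$-tests imposed by its conflict set $\cf(\sigma)$, so that $\chi(\beta;a)$ is well defined, and furthermore that after $\sigma$ is chosen the witness sent to $\per(\sigma)$ survives (is not discarded by some disarming event) long enough to be enumerated. This requires a careful interplay between the pair-cancellation rules \ref{it:p1}--\ref{it:p3}, the disarming process, and the global/local priorities; the content is precisely what the $R$- and $S$-condition satisfaction lemmas were designed to support, but extracting the conclusion for $\beta\in\cT^*$ calls for a split on whether $\sigma$ lies in $\eU$, $\eV$ or $\eW$ and on the relative priority between $\beta$ and the $R$-nodes in $\cf(\sigma)$.
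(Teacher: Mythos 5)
Your proposal matches the paper's intent exactly: the paper gives this lemma essentially without proof (it is stated with \(\qed\) right after the sentence ``By Lemma~\ref{lem:R correct} and~\ref{lem:S correct}, we have the following''), treating it as a routine extraction from the correctness lemmas together with the edge-parameter dynamics and the definition of \(\cT^*\), which is precisely what you spell out. Two small points to tidy up. First, in (3)/(4) you worry about whether ``a valid \(\chi(\beta;a)\) exists''; by the construction of the permitting center \(\chi(\beta;a)\) is \emph{always} defined — it defaults to \(\beta^-\) when no \(\xi_i\in\eW\) passes every \(\Test\) — so the substantive issue is not existence of \(\chi\) but that the resulting enumeration respects the \(R\)- and \(S\)-conditions, which is exactly what Lemmas~\ref{lem:R con} and~\ref{lem:S con} supply; you do eventually invoke the right lemmas, so this is a misdiagnosis rather than a gap. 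Second, your treatment of (9) covers only the \(d\)-outcome; the paper's ``\(w\)-outcome'' is evidently a misprint for the \(0\)-outcome, and you should add a sentence: if \(\alpha\concat 0\) is the leftmost outcome of a \(D(X)\)-node visited infinitely often, then after the last initialization there are only finitely many \(\alpha\)-expansionary stages, so the length of agreement between \(\Delta_\alpha\) and \(X\) is bounded, giving \(\Delta_\alpha\neq X\).
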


\begin{lemma}\label{lem:T star C Q}
    Suppose that \(\xi\) is a \(C(\alpha_0,\alpha_1,\alpha_2)\)\nbd{}node such that \(\xi\in \cT^*\) and \(\xi\concat n\notin \cT^*\) for each \(n\)\@. Suppose that \(\beta\) is a \(Q\)\nbd{}node such that \(\xi\concat \omega \subseteq \beta\) and \(\beta\) is initialized finitely often. Then \(\beta\) is disarmed finitely often.
\end{lemma}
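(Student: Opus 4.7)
The plan is to fix, after $\beta$'s last initialization, a stable pair $(\xi\concat m,\beta)$ and then show that each of the disarming events~\ref{it:DP-1}--\ref{it:DP-5} fires only finitely often.

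Fix a stage $s^*$ beyond both the last initialization of $\beta$ and the last initialization of $\xi$ (the latter exists since $\xi\in\cT^*$). If $\beta$ is never visited after $s^*$, then no pair is ever established thereafter and the disarming process is inactive, so there is nothing to show. Otherwise, at $\beta$'s first visit after $s^*$ rule~(\ref{it:Q1}) of Subsection~\ref{sec:Q node} sets up a pair $(\xi\concat m,\beta)$, which by~\ref{it:p1} is never canceled.

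The combinatorial heart of the proof is the claim that for every $n\le m$ the node $\xi\concat n$ is visited only finitely often. I prove this by induction on $n$. Suppose for contradiction that $\xi\concat n$ is visited infinitely often; then $\xi\concat n\notin\cT^*$ forces it to be initialized infinitely often after $s^*$. Since $\xi$ is not initialized after $s^*$, a brief case analysis using Definition~\ref{def:global priority} shows that any initialization of $\xi\concat n$ comes from one of two sources. First, a node $\eta$ in the subtree of some $\xi\concat l$ with $l<n$ announces progress (visit, initialization, disarmament, attention, or activation of a $d$-outcome); but the inductive hypothesis says $\xi\concat l$ is visited only finitely often, and the standard bookkeeping (a node visited only finitely often has $\Theta$ and $\dw$ defined on only finitely many points and can receive attention only once per initialization) turns this into a finite total. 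Second, a pair $(\xi\concat n,\alpha')$ is canceled via~\ref{it:p2}; but once $(\xi\concat m,\beta)$ is in force with $n\le m$, rule~(\ref{it:Q1}) forbids setting up any new pair at $\xi\concat n$, so only the finitely many pre-existing pairs there can ever be canceled. Together these contradict $\xi\concat n$ being initialized infinitely often, completing the induction.

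With the combinatorial claim in hand, the three ``easy'' disarming events fall out immediately: \ref{it:DP-3} fires only when some $\xi\concat l$ with $l\le m$ is actually visited; \ref{it:DP-5} fires only when $x_{(m,U)}$ changes, which by the rule in Section~\ref{sec:parameter} happens precisely when some $\xi\concat l$ with $l<m$ is visited; and~\ref{it:DP-1} fires only when a $P$- or $Q$-node extending $\xi\concat l$ with $l\le m$ receives attention, which by the same subtree-finiteness argument is also finite.

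The main obstacle is~\ref{it:DP-2} and~\ref{it:DP-4}. Here I would first establish that $\gamma_{\alpha_0}(BU;x_{(m,U)})$ stabilizes: if not, then at cofinitely many $\xi$-stages the least $n'$ with a changing $\alpha_0$-use is some $n'\le m$, and because the stable pair $(\xi\concat m,\beta)$ forces $\xi\concat n'\prec_{\xi}\eta$ at any $\eta$ on the $\Pi_3$-visit with $\beta\preceq\eta$, clause~(2) of $\visit(\xi)$ would direct the construction to $\visit(\xi\concat n')$ on each sufficiently deep stage, violating the combinatorial claim. Once $\gamma_{\alpha_0}(BU;x_{(m,U)})$ has settled at some $k^*$ and $U\res k^*$ has stabilized, any subsequent $\alpha$-stage that redefines $\Psi_\alpha(B;y_{(n,V)})$ advances the parameter $s_1$ past the last $U$-change, after which the first conjunct of~\ref{it:DP-2} fails; only a finite initial window survives. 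The ``transition'' phrasing of~\ref{it:DP-4}, which requires $\psi$ to cross $\gamma_{\alpha_0}(BU;x_{(m,U)})$ from $\uparrow$ or from $\ge$ down to $<$ rather than to merely sit below, similarly turns any sustained inequality into a finite count of sign changes once both sides have stabilized. Tying together the stabilization of $\gamma_{\alpha_0}$, the redefinition schedule of $\Psi_\alpha$, and the way the parameter $s_1$ feeds back into~\ref{it:DP-2} and~\ref{it:DP-4} is the delicate piece of the argument.
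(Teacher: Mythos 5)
Your proposal follows the same overall strategy as the paper's proof: fix the stable pair $(\xi\concat m,\beta)$ after $\beta$'s last initialization, show that every $\xi\concat l$ with $l\le m$ is visited (and initialized) only finitely often, and then conclude that each disarming event fires only finitely often. The paper phrases the key combinatorial claim as a two-case analysis (``Case 1'' assuming it, ``Case 2'' deriving a contradiction from its failure), while you do an explicit induction, but these are essentially the same argument.

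There is, however, a genuine gap in your induction. You claim that an initialization of $\xi\concat n$ after $s^*$ has ``one of two sources'': a node in the subtree of some $\xi\concat l$ with $l<n$ announcing progress, or a pair $(\xi\concat n,\alpha')$ being canceled via~\ref{it:p2}. You have omitted rule~\ref{it:p3}: if $\xi\concat(n-1)$ is paired with some $Q$-node $\alpha'$ in $\cT[\xi,\Pi_3]$, then $\alpha'$ announcing progress initializes $\xi\concat n$. This $\alpha'$ does \emph{not} live in the subtree of any $\xi\concat l$, so it falls outside your first case, and the initialization is not a pair cancellation at $\xi\concat n$, so it falls outside your second. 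The paper closes this off explicitly: by pairing priority (Definition~\ref{def:pairing priority}), $n-1<m$ gives $\alpha'\prec\beta$, so $\alpha'$ announcing progress would initialize $\beta$ too, contradicting the choice of $s^*$. You should add that case; the same pairing-priority observation you already invoke for~\ref{it:p2} handles it.

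A second, smaller issue is in your stabilization argument for $\gamma_{\alpha_0}(BU;x_{(m,U)})$. You argue that clause~(2) of $\visit(\xi)$ would fire at some $\eta$ with $\beta\preceq\eta$ and thus force a visit to $\xi\concat n'$. But the $\Pi_3$-visit is not guaranteed to reach such an $\eta$: it could turn to the right of $\beta$ at a branching node, or a $\visit(\eta)$ subroutine could stop the stage before $\eta$ extends $\beta$. The correct backstop is clause~(1): once the visit reaches depth $s$ (or stops short), the routine falls back to $\visit(\xi\concat n')$ as long as $\xi\concat n'$ has not been initialized during the stage, and you have already shown (modulo the \ref{it:p3} fix) that such initialization is impossible after $s^*$. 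The paper avoids this technicality by simply asserting the stability of $x_{(n,U)}$ and $\gamma_{\alpha_0}(BU;x_{(n,U)})$ in its Case 1, with the justification implicit; your attempt to spell it out is reasonable but should route through clause~(1) rather than clause~(2) of $\visit(\xi)$.
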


\begin{proof}
Let \(V=V_{\alpha_0}\).
Let \(s_0\) be the last stage when \(\beta\) is initialized (and \(\xi\) is not going to be initialized since \(\xi\in \cT^*\)). Suppose that \(\beta\) is not visited after \(s_0\), it is not paired and therefore it is not going be disarmed. Therefore we assume that \(\beta\) is visited after \(s_0\) and \((\xi\concat n,\beta)\) is the stable pair.
Consider \((\xi\concat n,\alpha\concat l,\beta)\)\nbd{}DisarmingProcess for some \(S\)\nbd{}node \(\alpha\). 
We show that \((\xi\concat n,\alpha\concat l,\beta)\)-DisarmingProcess can be triggered finitely often. 

Note first that since \(\beta\) is not initialized after \(s_0\), we are not going to visit any node \(\gamma\) extending \(\xi\concat \omega\) with \(\gamma\prec \beta\). Therefore the edge parameter \(y_{(l,V)}\) is stable for \(\alpha\).

\textbf{Case 1.} Suppose that for each \(l\le n\), \(\xi\concat l\) is visited finitely often. Then~\ref{it:DP-1} and~\ref{it:DP-3} are triggered finitely often. Since we also have \(x_{(n,U)}\) and \(\gamma_{\alpha_0}(BU;x_{(n,U)})\) stable,~\ref{it:DP-5},~\ref{it:DP-2} and~\ref{it:DP-4} are triggered finitely often.

\textbf{Case 2.} Suppose that \(l\le n\) is the least such that \(\xi\concat l\) is visited infinitely often toward a contradiction. Since \(\xi\concat l\notin\cT^*\), \(\xi\concat l\) is initialized infinitely often. Let \(k\le l\) be the least such that \(\xi\concat k\) is initialized infinitely often. As for each \(m<k\), \(\xi\concat m\) is visited finitely often, only~\ref{it:p2} or~\ref{it:p3} are possible to initialize \(\xi\concat k\) infinitely often. Suppose that~\ref{it:p2} happens after \(s_0\). If \(k<n\), then \(\beta\) would be initialized, contradicting the choice of \(s_0\); if \(k=n\), then \((\xi\concat n, \beta)\) must have been canceled, contradicting that \((\xi\concat n,\beta)\) is a stable pair. We are left with the possibility that~\ref{it:p3} happens infinitely often. We assume that \((\xi\concat (k-1),\beta')\) is the pair that triggers~\ref{it:p3} infinitely often. If \(\beta'\prec \beta\), then \(\beta'\) announcing progress would initialize \(\beta\), contradicting the choice of \(s_0\); if \(\beta'=\beta\), then \(k-1=n\), contradicting \(k\le l\le n\). 
\end{proof}

\begin{lemma}\label{lem:T star C alpha}
    Suppose that \(\xi\) is a \(C(\alpha_0,\alpha_1,\alpha_2)\)\nbd{}node such that \(\xi\in \cT^*\) and \(\xi\concat n\notin \cT^*\) for each \(n\)\@. Suppose that \(\alpha\in \cT^*\) is a node such that \(\xi\concat \omega \subseteq \alpha\) and \(\alpha\) is not a \(Q\)\nbd{}node and \(o\) is the leftmost outcome that is visited infinitely often. Then \(\alpha\concat o\in \cT^*\)\@. 
\end{lemma}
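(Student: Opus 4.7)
The goal is to show that $\alpha\concat o$ is visited infinitely often and injured (initialized or, if $\alpha\concat o$ is a $Q$-node, disarmed) only finitely often. The first property is immediate from the defining property of $o$ as the leftmost outcome of $\alpha$ visited infinitely often. For disarmament, note that if $\alpha\concat o$ is a $Q$-node then $\xi\concat \omega\subseteq \alpha\concat o$, so Lemma~\ref{lem:T star C Q} reduces the disarmament bound to the initialization bound. Hence the whole task reduces to bounding the initializations of $\alpha\concat o$.

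The plan is to choose a stage $s_0$ large enough that (i) $\alpha$ is never initialized after $s_0$, and (ii) for every outcome $o_0<_L o$ of $\alpha$, $\alpha\concat o_0$ is never visited after $s_0$. Such $s_0$ exists because $\alpha\in \cT^*$ and because $\alpha$---being neither a $Q$-node (by hypothesis) nor a $C$-node (the $\Pi_3$-world of $\xi$ contains no $C$-nodes)---is of one of the types $S$, $R^-$, $D$, $G$, or $\Permit$, each of which has only finitely many outcomes to the left of any given one. After $s_0$, any initialization of $\alpha\concat o$ must be caused by some $\mu\prec\alpha\concat o$ announcing progress. A global-priority case analysis at the branching point of $\mu$ and $\alpha\concat o$ (which is always defined because $\alpha$ is not a $C$-node) yields that either $\mu\prec\alpha$, which is ruled out by~(i), or $\mu$ extends $\alpha\concat o_0$ for some $o_0<_L o$. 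Denote by $L$ the union of subtrees rooted at $\alpha\concat o_0$ with $o_0<_L o$.

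It therefore suffices to bound the progress announcements by nodes in $L$ after $s_0$. Visits in $L$ are excluded by~(ii). For a $P$- or $Q$-node $\beta'\in L$, the functional $\Theta_{\beta'}$ is built only during visits of $\beta'$ and is discarded upon every initialization or disarmament; combined with the once-per-cycle restriction from Subsection~\ref{sec:permitting center} and the finiteness of visits to $\beta'$, this bounds the total number of attentions of $\beta'$. Activations of $d$-outcomes in $L$ are in bijective correspondence with attentions received by the associated $P/Q$-node (which also lies in $L$), and are therefore bounded. Internal initializations within $L$ cascade from root events inside $L$ together with ancestor initializations and announcements by nodes $\nu\prec\alpha$, both bounded by $\alpha\in \cT^*$; crucially, events in $\cT[\xi, \Sigma_3]$ do not directly initialize any node in $L$ because the global priority between the two sides of the $C$-node $\xi$ is undefined by Definition~\ref{def:global priority}.

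The remaining obstacle, which I expect to be the main difficulty, is bounding disarmaments of $Q$-nodes $\beta\in L$. The plan is a two-case split. If $\beta$ is initialized finitely often, Lemma~\ref{lem:T star C Q} directly yields that $\beta$ is disarmed finitely often. If $\beta$ is initialized infinitely often, then since $\beta$ is visited only finitely often (being in $L$), no new pair for $\beta$ can be established after $\beta$'s last visit; the pair extant at that moment, if any, is cancelled at the next initialization of $\beta$, after which $\beta$ has no pair and thus no live disarming process, so no further disarmament can occur. Hence all disarmaments of $\beta$ lie within a finite window. Since only finitely many $Q$-nodes in $L$ ever get paired (pairing requires a visit), disarmaments in $L$ are collectively finite. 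This closes the bound on root events in $L$ and therefore on initializations of $\alpha\concat o$, yielding $\alpha\concat o\in \cT^*$.
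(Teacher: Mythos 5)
Your proof is correct and follows essentially the same strategy as the paper's: fix a stage past which $\alpha$ is no longer initialized and the outcomes to the left of $o$ are no longer visited, then argue that from some later stage onward no node to the left of $\alpha\concat o$ in the $\Pi_3$-world announces progress, invoking Lemma~\ref{lem:T star C Q} for disarmaments and the observation that $\Theta$-functionals cannot be extended without visits. The paper's own proof is much terser — it simply asserts the existence of $s_1$ with properties (1)--(5) and gives one-line justifications — while you make explicit the decomposition of ``announces progress'' into its five clauses, the reduction of activations to attentions, and the fact that the $\Sigma_3$-side of $\xi$ cannot directly initialize nodes in $\cT[\xi,\Pi_3]$ because $\prec$ is undefined across the $\omega$-boundary. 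Your two-case handling of disarmaments of $Q$-nodes in $L$ (the infinitely-often-initialized case, where you observe that no new pair can be set up after the last visit so disarming dies out after the next cancellation) is a genuinely useful clarification that breaks a potential circularity between bounding initializations and bounding disarmaments; the paper leaves this implicit by stating (4) and (5) simultaneously.
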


\begin{proof}
    
    Let \(s_0\) be the least stage after which \(\alpha\) is not initialized (as \(\alpha\in \cT^*\)). Let \(s_1\ge s_0\) be the least stage after which nodes (in \(\cT[\xi,\Pi_3]\)) to the left of \(\alpha\concat o\)
    \begin{enumerate}
        \item are never visited (by the choice of~\(o\)),
        \item never become activated,
        \item never receive attention (as the definitions of \(\Theta\)\nbd{}functionals of \(Q\)\nbd{}node are not going be extended), 
        \item are never initialized, and
        \item are never disarmed (by Lemma~\ref{lem:T star C Q} and~(4) above).
    \end{enumerate}
    
    Referring to Definition~\ref{def:announces progress}, we conclude that nodes (in \(\cT[\xi,\Pi_3]\)) to the left of \(\alpha\concat o\) will not announce progress after~\(s_1\) and therefore \(\alpha\concat o\) will not be initialized after~\(s_1\). 
    If \(\alpha\concat o\) is not a \(Q\)\nbd{}node, then we immediately have \(\alpha\concat o\in \cT^*\);
    if \(\alpha\concat o\) is a \(Q\)\nbd{}node, we use Lemma~\ref{lem:T star C Q} to conclude that \(\alpha\concat o\) is disarmed only finitely often and hence \(\alpha\concat o\in \cT^*\). 
    
    This completes the proof.
\end{proof}

\begin{lemma}\label{lem:T star path}
    Suppose \(A\nleq_T B\), then \(\cT^*\) has an infinite path.
\end{lemma}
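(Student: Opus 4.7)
I plan to construct the required infinite path $\rho$ through $\cT^*$ recursively, starting from the root $\lambda$, which trivially lies in $\cT^*$ because it is visited at every stage and is never injured. The inductive step proves that every $\alpha \in \cT^*$ admits some outcome $o$ with $\alpha\concat o \in \cT^*$, provided $\alpha$ is not terminal; and that terminal ($P$- and $Q$-) nodes cannot belong to $\cT^*$ under the hypothesis. For the latter, if a $P$- or $Q$-node were in $\cT^*$, then Lemma~\ref{lem:T star}(3,4) would force $A$ to be recursive, contradicting $A\nleq_T B$. Choosing the leftmost such successor at each step yields $\rho$.

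For the inductive step with $\alpha$ a non-terminal and non-$C$-node, let $s_0$ be the last stage at which $\alpha$ is injured, and let $o$ be the leftmost outcome of $\alpha$ visited infinitely often after $s_0$. Existence of $o$ is obvious for $\Permit$-, $G$-, and $D$-nodes, while for $R$-, $R^-$-, and $S$-nodes it follows from Lemma~\ref{lem:T star}(6,7,8): if no outcome is visited infinitely often we would get $A\le_T B$, a contradiction. To see $\alpha\concat o\in \cT^*$, observe that after $s_0$ the only possible sources of initialization of $\alpha\concat o$ are nodes $\beta\prec \alpha\concat o$; by Definition~\ref{def:global priority} these are either extensions of $\alpha\concat o'$ for $o'<_L o$ (eventually silent by the choice of $o$) or $\prec$-above $\alpha$ (which would re-injure $\alpha$, impossible after $s_0$); and if $\alpha\concat o$ is a $Q$-node, Lemma~\ref{lem:T star C Q} handles disarmaments whenever we are working inside a $\Pi_3$ world, while outside a $\Pi_3$ world a $Q$-node is never disarmed at all.

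The delicate case is when $\alpha$ is a $C(\alpha_0,\alpha_1,\alpha_2)$-node $\xi$. If some $\xi\concat n\in \cT^*$, take the leftmost such as $o$. Otherwise, we show $\xi\concat\omega\in \cT^*$ and apply Lemma~\ref{lem:T star C alpha} to extend the path through $\cT[\xi,\Pi_3]$: that lemma (together with Lemma~\ref{lem:T star C Q}) already does the heavy lifting inside the $\Pi_3$-world. The point is that $\xi\concat\omega$ cannot be initialized through any $\xi\concat n$ (since $\prec$ is left undefined between these by Definition~\ref{def:global priority}), and $\xi$ being in $\cT^*$ bounds its other sources of initialization; meanwhile the visit procedure of $\xi$ ensures that if no $\xi\concat n$ accumulates infinitely many visits, $\xi\concat\omega$ must.

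The main obstacle is the intermediate $C$-node situation: some $\xi\concat n$ is visited infinitely often but lies outside $\cT^*$, i.e.\ is initialized infinitely often. Tracking the source of these initializations via rules~\ref{it:p1}--\ref{it:p3} (pair cancellations) one must trace them back to $Q$-nodes in the $\Pi_3$-world of $\xi$ that announce progress infinitely often; this is precisely the analysis carried out in the proof of Lemma~\ref{lem:T star C Q} (inspecting each of \ref{it:DP-1}--\ref{it:DP-5}), and the same reasoning shows that such initializations are in fact bounded after a suitable stage, so that actually some $\xi\concat n$ must lie in $\cT^*$ after all, or else no $\xi\concat n$ is visited infinitely often and we revert to the $\Pi_3$-world case. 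Once this is in hand, the recursive construction produces an infinite path and the lemma follows.
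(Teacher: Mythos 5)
Your proposal follows essentially the same recursive construction as the paper's (very terse) proof, and it correctly fills in several steps the paper leaves implicit: that terminal $P$- and $Q$-nodes cannot lie in $\cT^*$ (else $A$ would be recursive by Lemma~\ref{lem:T star}(3,4), contradicting $A\nleq_T B$); that for non-$C$-nodes outside a $\Pi_3$-world the leftmost infinitely-often-visited outcome gives a successor in $\cT^*$; and that inside a $\Pi_3$-world Lemma~\ref{lem:T star C alpha} takes over. For a $C$-node $\xi\in\cT^*$ your dichotomy---pick the leftmost $\xi\concat n\in\cT^*$ if one exists, otherwise use $\xi\concat\omega$---is cleaner than the paper's ``leftmost outcome visited infinitely often'' phrasing, since an infinitely-often-visited $\xi\concat n$ need not lie in $\cT^*$.

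The one soft spot is your argument that $\xi\concat\omega$ is visited infinitely often, which you state conditionally: ``if no $\xi\concat n$ accumulates infinitely many visits, $\xi\concat\omega$ must.'' This leaves open the case in which some $\xi\concat n$ is visited infinitely often yet still falls outside $\cT^*$, and your final paragraph then tries to close this by claiming that either some $\xi\concat n\in\cT^*$ after all or no $\xi\concat n$ is visited infinitely often. That dichotomy is not justified by Lemma~\ref{lem:T star C Q} (whose Case 2 argument is under stronger hypotheses, including a fixed $Q$-node $\beta$ that is initialized only finitely often) and is not needed. The direct observation is this: in $\visit(\xi)$ the interrupt routine starts at $\eta=\xi\concat\omega$, and step 2 never fires there because $\xi\concat n\prec_{\xi,s}\xi\concat\omega$ would require some paired $Q$-node $\alpha_i\preceq\xi\concat\omega$ (Definition~\ref{def:local priority}(\ref{it:local priority 1})), whereas every paired $Q$-node properly extends $\xi\concat\omega$. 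Hence $\xi\concat\omega$ is visited at every $\xi$-stage $s\geq\abs{\xi\concat\omega}$, unconditionally, regardless of what happens at the outcomes $\xi\concat n$; combined with your (correct) observation that $\xi\concat\omega$ is only initialized when $\xi$ is or when a node of higher global priority than $\xi$ announces progress, this gives $\xi\concat\omega\in\cT^*$ outright. With that replacement, your final paragraph should simply be deleted, and the proof is complete and matches the paper's route.
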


\begin{proof}
    We recursively define a path \(\rho\subseteq \cT^*\). We enumerate the root \(\lambda\) into \(\rho\). Suppose that we have enumerated \(\alpha\) into \(\rho\). Let \(o\) be the leftmost path that is visited infinitely often (whose existence is guaranteed by Lemma~\ref{lem:T star} when applicable), then \(\alpha\concat o\in \cT^*\) (by Lemma~\ref{lem:T star C alpha} when applicable) and we enumerate \(\alpha\concat o\) into \(\rho\). 
\end{proof}

\begin{proof} (of Theorem~\ref{thm:main})
Let \(\rho\) be an infinite path of \(\cT^*\) given by Lemma~\ref{lem:T star path}. Let \(X\) be the set given by Lemma~\ref{lem:full requirement}. By Lemma~\ref{lem:T star}, \(\deg X\) is the desired degree.
\end{proof}

\begin{remark*}
     \(\cT^*\) can possibly have countably many infinite paths and each infinite path of \(\cT^*\) gives us a successful candidate for Theorem~\ref{thm:main}! Having exhibited a successful candidate \(X\), we know that all nonrecursive sets that are Turing reducible to \(X\) are also successful candidates for our Theorem~\ref{thm:main}.
\end{remark*}

\section{Finding the index of a solution}\label{sec:index}
\begin{theorem}\label{lem:index}
    Given r.e.\ sets \(A=W_a\) and \(B=W_b\) with \(A\nleq_T B\), there is a function \(f\le_T 0^{(4)}\) such that \(0<_T W_{f(a,b)}\le_T A\) and \(A\nleq_T B\oplus W_{f(a,b)}\)\@.
\end{theorem}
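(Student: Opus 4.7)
The plan is to observe that the entire construction carried out in Sections~2--5 is uniform in the r.e.\ indices $a,b$ for $A=W_a$ and $B=W_b$. From $a$ and $b$ one can effectively enumerate the priority tree $\cT$, simulate the stage-by-stage construction, and in particular obtain an effective (in $(a,b)$) enumeration of each candidate set $U$, $V_\alpha$, $W_{\alpha,\beta}$: numbers enter these sets only at the corresponding $\Permit$-nodes, so given the index of such a node together with $(a,b)$, an r.e.\ index for the associated candidate is computed uniformly.

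First, I would verify that $\cT^*$ (Definition~\ref{def:T star}) is $0''$-recursive uniformly in $(a,b)$: ``$\alpha$ is visited infinitely often'' is $\Pi^0_2$ in $(a,b)$ and ``$\alpha$ is injured finitely often'' is $\Sigma^0_2$ in $(a,b)$, so membership in $\cT^*$ is $\Delta^0_3(a,b)$. Since $\cT^*$ is a two-branching tree (by the remark following Definition~\ref{def:T star}), the predicate ``$\sigma\in\cT^*$ has an infinite extension in $\cT^*$'' is $\Pi^0_1(\cT^*)$, hence $\Pi^0_3$ in $(a,b)$, and so is decidable from $0^{(3)}$ uniformly in $(a,b)$; in particular it is decidable from $0^{(4)}$.

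Next, using this oracle, I would compute an infinite path $\rho\subseteq\cT^*$ (whose existence is guaranteed by Lemma~\ref{lem:T star path} under the hypothesis $A\nleq_T B$) by recursively taking, at each $\sigma\in\rho$, the leftmost child in $\cT^*$ with an infinite extension; this mimics the construction in Lemma~\ref{lem:T star path} but is now done effectively in $0^{(4)}$. Along $\rho$ there are at most three $\Permit$-nodes, namely $\Permit(U)$ (the root), possibly a $\Permit(V_\alpha)$-node, and possibly a $\Permit(W_{\alpha,\beta})$-node; the deepest of these identifies the good set $X$ supplied by Lemma~\ref{lem:full requirement}. Reading off this deepest $\Permit$-node from the initial segment of $\rho$ is effective, so $f(a,b)$ can be defined to be the r.e.\ index of the candidate set attached to that node. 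By Lemma~\ref{lem:T star} applied along $\rho$, this $X$ automatically satisfies $X\leq_T A$ (via $\Permit(X)$), $X$ is nonrecursive (all $D_e(X)$-requirements hold along $\rho$), and $A\nleq_T B\oplus X$ (all $G_e(X)$-requirements hold along $\rho$), as in the proof of Theorem~\ref{thm:main}.

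The only step that is not already in the verification is the complexity estimate for computing $\rho$, and the main thing to double-check is that the simulation of the construction, the evaluation of the $\Pi^0_2$ and $\Sigma^0_2$ predicates defining $\cT^*$, the navigation of the two-branching structure at $C$-nodes (where the outcomes of $\cT$ have order type $\omega+1$ but at most two of them survive in $\cT^*$), and the extraction of an r.e.\ index for $X$ from a $\Permit$-node are all carried out uniformly in $(a,b)$; this is routine but somewhat bookkeeping-intensive, and is the only real obstacle. Granting these uniformities, $f\leq_T 0^{(3)}\leq_T 0^{(4)}$ and Theorem~\ref{lem:index} follows.
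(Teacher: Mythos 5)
Your computation of $\rho$ from $0^{(3)}$ is correct and matches the paper's first proof, but the step where you extract the good set $X$ from $\rho$ has a genuine gap. You claim that ``along $\rho$ there are at most three $\Permit$-nodes'' and that the deepest one identifies the good $X$. Both parts of this are false. Whenever a $G_e(U)$-node $\alpha$ on $\rho$ takes its $\infty$-outcome, $\alpha\concat\infty=\Permit(V_\alpha)$ enters $\rho$; if a divergent outcome for $\alpha$ later appears on $\rho$, the path returns to $D_e(U)$ and can subsequently pass through another $G_{e'}(U)\concat\infty=\Permit(V_{\alpha'})$, and so on. If $U$ is good this happens for infinitely many $e$, so there can be infinitely many $\Permit(V_\alpha)$- and $\Permit(W_{\alpha,\beta})$-nodes on $\rho$, and ``the deepest $\Permit$-node'' is not well-defined, nor would it detect which set has all its $G$- and $D$-requirements placed on $\rho$.

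Once you fix this, the honest complexity goes up: identifying the good $X$ means deciding, for each $\Permit(X)$-node on $\rho$, whether infinitely many $\beta\in\rho$ satisfy $\type(\beta)=\type(\Permit(X))$. This is $\Pi_2$ relative to $\rho$, which with $\rho\le_T 0^{(3)}$ gives only $0^{(5)}$ naively, not the $0^{(3)}$ you assert (which, if correct, would strengthen the theorem). The paper pushes this back down to $0^{(4)}$ by a structural observation you omit entirely: there is at most one $C$-node $\xi$ with $\xi\concat\omega\in\rho$, and this single bit is decidable from $\rho'\le_T 0^{(4)}$; fixing $\xi$ as a parameter makes $\rho\le_T 0''$, so the $\Pi_2(\rho)$ question becomes decidable from $0^{(4)}$. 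You would need to reproduce this parametrization step (or the paper's much shorter alternative proof, which avoids the construction altogether by observing that $\cS_{a,b}=\{e\mid 0<_T W_e\le_T W_a\land W_a\nleq_T W_b\oplus W_e\}$ is uniformly $\le_T 0^{(4)}$ and, by Theorem~\ref{thm:main}, nonempty whenever $W_a\nleq_T W_b$, so one can simply take $f(a,b)=\min\cS_{a,b}$).
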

\begin{proof}
    Let \(\rho\) be the true path (Definition~\ref{def:T star}). We use \(0^{(4)}\) to decide if \(A\le_T B\) or not. If \(A\le_T B\), we define \(f(a,b)=0\). If \(A\nleq_T B\), we will use \(0^{(4)}\) to decide which set is \emph{good} (as defined in the first line of the proof of Lemma~\ref{lem:full requirement}).
    
    First of all, the jump of \(\cT^*\) and hence \(0'''\) can compute the true path~\(\rho\). 
    Next, we use \(\rho'\le_T 0^{(4)}\) to decide whether there is a \(C\)\nbd{}node~\(\xi\) with \(\xi\concat \omega\in \rho\).
    Note that there can be at most one such \(C\)\nbd{}node. Case (i), such \(C\)\nbd{}node~\(\xi\) exists. With~\(\xi\) as a parameter, \(\rho\) becomes \(\le_T 0''\). We start with \(\alpha=\xi\concat \omega\) and use \(\rho''\le_T 0^{(4)}\) to decide whether there are infinitely many nodes \(\beta\in \rho\) such that \(\type(\beta)=\type(\alpha)\). If yes, define \(f(a,b)\) to be the index of~\(X\); if not, we proceed to the next node in~\(\rho\) and repeat. This procedure terminates by Lemma~\ref{lem:full requirement}.
    Case (ii), Such \(C\)\nbd{}node~\(\xi\) does not exist. By a similar argument, the index of the good set can be found using~\(0^{(4)}\).
\end{proof}

In fact, with a very short proof, we are able to exhibit such an~\(f\) without referring to a concrete construction.
\begin{proof}[Another proof.]
    We write \[
        \cS_{a,b}=\{e\mid 0<_T W_e\le_T W_a \land W_a\nleq_T W_b\oplus W_e\}.
    \]
    We have shown in Theorem~\ref{thm:main} that
    \[
        \forall a,b(W_a\nleq_T W_b \iff \cS_{a,b}\neq \varnothing).
    \]
    As \(\cS_{a,b}\le_T 0^{(4)}\) we can define the \(0^{(4)}\)\nbd{}recursive function \[
        f(a,b)=\begin{cases}
        0, & W_a\leq_T W_b;\\
        \min \cS_{a,b}, & W_a\nleq_T W_b. 
        \end{cases}\qedhere
    \]
\end{proof}

\begin{remark*}
In general, deciding whether a set like \(\cS_{a,b}\) is empty requires \(0^{(5)}\); what we have showed is that this is in fact a \(0^{(4)}\) question.
\end{remark*}

Next we prove that no function $f\leq_T 0'''$ can compute the correct index. Consequently, high level of non-uniformity is necessary for the construction.

\begin{theorem}\label{thm:3}
Given $f\leq_T 0'''$, there are two r.e.\ sets $A=W_a$ and $B=W_b$ with \(W_a\nleq W_b\) such that $W_{f(a,b)}\nleq_T A$ or $W_{f(a,b)}\oplus B\geq_T A$\@(i.e., \(f(a,b)\notin \cS_{a,b}\)\@).
\end{theorem}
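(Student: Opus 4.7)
The plan is to diagonalize against $f$ using the recursion theorem together with a tree-of-strategies argument whose complexity is one level below that of the main construction.

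Since $f\leq_T 0'''$, the iterated Shoenfield limit lemma yields a recursive function $g$ with
\[
    f(a,b) = \lim_{s_0}\lim_{s_1}\lim_{s_2} g(a,b,s_0,s_1,s_2).
\]
By a double application of the recursion theorem (with parameters), we obtain indices $a^*$ and $b^*$ such that the r.e.\ sets $A = W_{a^*}$ and $B = W_{b^*}$ are simultaneously enumerated by an effective procedure that takes $a^*, b^*$ as parameters. The construction aims to satisfy, for each $i$, the following two groups of requirements:
\begin{itemize}
    \item (Diagonalization) $\Phi_i^B \neq A$, giving $A \nleq_T B$;
    \item (Reduction) $A \leq_T W_{f(a^*,b^*)} \oplus B$.
\end{itemize}
The Reduction requirement directly falsifies the clause $W_{a^*} \nleq_T W_{b^*} \oplus W_{f(a^*,b^*)}$ in the definition of $\cS_{a^*,b^*}$, so together with Diagonalization it yields both $W_{a^*} \nleq_T W_{b^*}$ and $f(a^*,b^*)\notin \cS_{a^*,b^*}$, as required.

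The strategies will be organized on a $0'''$-level priority tree that branches once for each of the three nested limits defining $f$; each node $\eta$ carries a guess $e_\eta$ for $f(a^*,b^*)$, and the \emph{true path} is the one along which all three layers of guess converge to their correct limits. A diagonalization strategy at a node with guess $e$ picks a fresh witness $x$, waits for $\Phi_i^B(x)\downarrow = 0$ together with $x \in W_e$, and only then enumerates $x$ into $A$. Along the true path, every $x$ that enters $A$ therefore belongs to $W_{f(a^*,b^*)}$, and the reduction $A \leq_T W_{f(a^*,b^*)} \oplus B$ is recovered by the usual permitting-style argument, using $B$ to time when one needs to re-query $W_{f(a^*,b^*)}$.

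The hard part will be controlling enumerations performed by nodes off the true path, which a priori could contribute elements to $A$ that do not belong to $W_{f(a^*,b^*)}$. The remedy is standard for tree constructions at the appropriate level of complexity: whenever a higher-level guess is revised we initialize the dependent sub-trees, and we insist that each off-path enumeration into $A$ is mirrored by an enumeration into $B$, so that any off-path contribution to $A$ is reducible to $B$ alone. A careful accounting shows that along the leftmost path visited infinitely often, only finitely many corrections occur, so the reduction $A \leq_T W_{f(a^*,b^*)} \oplus B$ persists in the limit while the Diagonalization requirements are all eventually met. Verifying the existence of the true path and the stability of the reduction is entirely analogous to (but a level simpler than) the verification carried out in Subsections~\ref{sec:R strategy}, \ref{sec:S strategy}, and~\ref{sec:proof of main}.
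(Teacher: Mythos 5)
Your decomposition into ``Diagonalization: $A\nleq_T B$'' and ``Reduction: $A\leq_T W_{f(a^*,b^*)}\oplus B$'' cannot work in general, because these two requirements are \emph{jointly unsatisfiable} for some values of $f(a^*,b^*)$. Once the recursion theorem has fixed $a^*$ and $b^*$, the set $W_{f(a^*,b^*)}$ is a particular r.e.\ set over which you have no control, and it may well be recursive (even empty). In that case $W_{f(a^*,b^*)}\oplus B\equiv_T B$, so your unconditional Reduction requirement collapses to $A\leq_T B$, contradicting Diagonalization outright. The symptom surfaces in your $N$-strategy: you wait for a witness $x$ with $\Psi_i(B;x)\downarrow=0$ \emph{and} $x\in W_e$ before enumerating $x$ into $A$. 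If $W_e$ supplies no such $x$ (for example $W_e=\varnothing$), the strategy stalls forever with $\Psi_i(B;x)=0=A(x)$, and you have given no fallback mechanism by which $A\nleq_T B$ could still be achieved.

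The paper avoids this by never demanding an unconditional reduction. Its ``basic construction'' (borrowed from~\cite{Barmpalias.Cai.ea:2015}) targets the \emph{conditional, disjunctive} requirement
\[
R_{i,e}\colon\quad W_e=\Phi_i(A)\ \Longrightarrow\ \bigl[\,\exists\Gamma\,(A=\Gamma(BW_e))\ \ \lor\ \ \exists\Delta\,(W_e=\Delta)\,\bigr],
\]
and its $N$-strategy enumerates its diagonalizing point into $A$ \emph{first}, then inspects whether $W_e$ produced a small change below the relevant $\Gamma$-use. If $W_e$ changed, the $\Gamma$-functional is repaired and the $N$-attack succeeds; if not, the $N$-attack is abandoned but the non-change is recorded by extending the recursive functional $\Delta$. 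When the second case happens cofinally, $\Delta$ is total and equal to $W_e$, certifying that $W_e$ is recursive, which already gives $f(a,b)\notin\cS_{a,b}$ via the $0<_T W_e$ clause; when it is bounded, diagonalization eventually wins. Your sketch has no analogue of this $\Delta$-escape, and without it the case where $W_{f(a,b)}$ is useless is unhandled. Rebuilding your argument around the conditional requirement $R_{i,e}$ together with the $\Delta$-escape, rather than the unconditional reduction, would bring you in line with the paper; the genuinely delicate remaining point, which the paper's sketch does address, is how basic constructions running with incompatible guesses for $e$ share the common sets $A$ and $B$ (resolved by having each new entrance enumerate designated markers into $B$ to undefine the $\Gamma$-functionals built under stale guesses).
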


Let us first recall from~\cite{Barmpalias.Cai.ea:2015} of what can be done and what can not be done. In~\cite{Barmpalias.Cai.ea:2015}, they were trying to build two r.e.\ sets \(A=W_a\) and \(B=W_b\) so that \(A\nleq_T B\) and for all \(e\) we have \(e\notin \cS_{a,b}\). In fact, they developed a successful technique so that for any fixed \(e_0, e_1\), they could build two r.e.\ sets \(A=W_a\) and \(B=W_b\) with \(A\nleq_T B\) so that \(e_0,e_1\notin \cS_{a,b}\).
The conflicts between three and more sets, however, were overlooked.
Fortunately we do not need their technique here to exhibit Theorem~\ref{thm:3}.
We only need the fact that for each fixed \(e\) we can build \(A=W_a\) and \(B=W_b\) with \(A\nleq_T B\) and \(e\notin \cS_{a,b}\). This construction can be done by a simple priority argument.
The exact details are not critical in proving Theorem~\ref{thm:3}; we just need the construction to be \emph{simple enough}. 

This paragraph serves as a reminder for readers who are interested in the construction.
Fixing an index \(e\), we are building two r.e.\ sets \(A\) and \(B\) so that the following requirements for each \(i\in \omega\) are satisfied: 
\begin{itemize}
    \item \(N_i: A\neq \Psi_i(B)\);
    \item \(R_{i,e}: W_e=\Phi_i(A)\to [\exists \Gamma (A=\Gamma(BW_e)) \lor \exists \Delta(W_e=\Delta)]\). 
\end{itemize}
The conflict is between the \(\Gamma\)\nbd{}functionals and \(N\)\nbd{}nodes. When an \(N\)\nbd{}node wants to enumerate a diagonalizing point, say \(a\), into \(A\), it enumerates the point. 
Then an expansionary stage of \(R\)\nbd{}node will tell us whether there is a small change in \(W_e\). If there is one, then \(\Gamma(BW_e;a)\) is undefined without a \(B\)\nbd{}change (\(N\)\nbd{}node is happy) and we redefine it with a fresh use; otherwise, \(N\)\nbd{}node loses its diagonalizing point but it can extend the definition of \(\Delta\). 
The construction has no more conflicts as \(W_e\) is the same set throughout the construction and hence one particular \(\Delta=W_e\) wins all \(R_{i,e}\)\nbd{}requirements for all \(i\in \omega\); this feature is indispensable for this construction.
This construction is referred as \emph{basic} construction with parameter \(e\) and the priority tree will be denoted by \(\cT_e\).

Now let us do the first warm-up to Theorem~\ref{thm:3} by assuming the given \(f\) recursive. 
\begin{proof}[Sketch of the proof for \(f\le_T 0\).]
We are building two sets \(A\) and \(B\). By recursion theorem we know the indices \(a\) and \(b\) for \(A\) and \(B\) to be constructed. Then we do the basic construction with parameter \(f(a,b)\) to get \(A=W_a\) and \(B=W_b\) with \(W_a\nleq_T W_b\) and \(f(a,b)\notin \cS_{a,b}\).
\end{proof}

As we can see from the proof, we simply apply the basic construction with the \emph{correct} parameter to construct \(A\) and \(B\). If \(f\) is recursive, we immediately compute \(f(a,b)\) for the correct parameter. 
However, if \(f\) is not recursive, we have to guess the value of \(f(a,b)\). Sometimes we run the basic construction with a \emph{wrong} parameter, sometimes with the correct one. We have to ensure that the basic construction with correct parameter is not interrupted by those with wrong ones: the idea is that the basic construction is fairly simple and the conflicts between basic constructions with different parameters are not fatal. Now we do the second warm-up for the double jump (we skip the warm-up for the jump). 

\begin{proof}[Sketch of the proof for \(f\le_T 0''\).]
Using recursion theorem we know the indices \(a\) and \(b\) for the sets we are constructing. We have to guess what \(f(a,b)\) is. Since \(f\le_T 0''\), so the relation \(f(a,b)=e\) is both \(\Sigma_3\) and \(\Pi_3\). Let \(R\) and \(S\) be recursive predicates such that \[
    f(a,b)=e \iff \exists u [R(u,e) \text{ happens infinitely often}]  
\] and \[ 
    f(a,b)\neq e\iff \exists u [S(u,e) \text{ happens infinitely often}],
\]
where \(R(u,e)=R(u,e,a,b)\) and \(S(u,e)=S(u,e,a,b)\) and we suppress the parameters \(a\) and \(b\).


Then we define the \emph{guessing tree} \(\cG\) to guide our construction. The root \(\lambda\) is assigned \(G(0)\). A \(G(e)\)\nbd{}node \(\alpha\) has \(\omega\) outcomes ordered by \(0< 1<\cdots<2u<2u+1<\cdots\). Each \(\alpha\concat (2u)\) is a terminal node and \(\alpha\concat (2u+1)\) is assigned \(G(e+1)\).

At each stage~\(s\), we begin with \(\visit(\lambda)\), where \(\visit(\alpha)\) has the following strategy:

Suppose that \(\alpha\) is an \(G(e)\)\nbd{}node.
\begin{enumerate}
    \item Let \(u\) be the least, if any, such that \(R(u,e)\) happens or \(S(u,e)\) happens. 
    \item If \(R(u,e)\) happens, \(\visit(\alpha\concat (2u))\).
    \item If \(S(u,e)\) happens, \(\visit(\alpha\concat (2u+1))\).
\end{enumerate}

Suppose that \(\alpha\) is \(\beta\concat (2u)\) for some \(G(e)\)\nbd{}node \(\beta\). We \(\visit(\lambda)\) for the root \(\lambda\) in \(\cT_{e}(u)\), where \(\cT_{e}(u)\) is a copy of the priority tree for the basic construction with parameter \(e\) and \(u\) is a parameter from \(\cG\). (\emph{We think of the terminal node of \(\cG\) as an entrance to the priority tree for the basic construction with corresponding parameters.})

We have the usual left-to-right initializations: if \(\beta\concat (2u)\) is initialized (because a node to the left is visited), then all nodes of \(\cT_{e}(u)\) are initialized.

The true path \(\rho\) of \(\cG\) is the leftmost path that is visited infinitely often. If \(G(e)\concat (2u+1)\in \rho\), then \(f(a,b)\neq e\); if \(G(e)\concat (2u)\in \rho\), then \(f(a,b)=e\). Therefore \(\rho\) is finite. Let \(G(e)\concat (2u)\) be the longest node of \(\rho\), then \(\cT_{e}(u)\) is visited infinitely often. Note that each node along the true path of \(\cT_{e}(u)\) is initialized still only finitely often.
\end{proof}

If \(f\le_T 0''\), \(\cT_{e}(u)\) with different \(e\) and \(u\) have no conflicts other than left-to-right initializations. This will not be the case when \(f\le_T 0'''\). We present the main ideas to overcome the conflicts and to avoid overburdening the readers with technical details.

\begin{proof}[Sketch of the proof for Theorem~\ref{thm:3}]
Using the limit lemma, we know \[
    e=f(a,b)=\lim_{z\to \infty} g(a,b,z)
\] for some \(g\le_T 0''\), therefore \(g(a,b,z)=e\) can be guessed using the same process as in the above proof. 


We now define our guessing tree \(\cG\): the root \(\lambda\) is assigned \(G(0,0)\). Suppose that \(\alpha\) is assigned \(G(z,e)\), it has \(\omega\) outcomes ordered as \(0<1<\cdots<2u<2u+1<\cdots\). \(\alpha\concat (2u)\) is assigned \(G(z+1,0)\) and \(\alpha\concat (2u+1)\) is assigned \(G(z,e+1)\).
If \(\rho\) is the true path along \(\cG\), we have that \(G(z,e)\concat (2u)\in \rho\) for some \(u\) if and only if \(g(a,b,z)=e\). 

As we travel along the guessing tree \(\cG\) and visit a node \(G(z,e)\concat (2u)\), we better allow the basic construction with parameter \(e\) to run. But as our guessing tree \(\cG\) is more complicated and two nodes \(\alpha\subsetneq \beta\) might have guessed for different parameters \(e_\alpha\neq e_\beta\), these two basic constructions with parameters \(e_\alpha\) and \(e_\beta\) have to find ways to cooperate.

The \emph{predecessor} of \(\alpha\) is the longest node \(\beta\), if any, such that \(\beta\concat(2u)\subseteq \alpha\) for some \(u\).
A \(G(z,e)\)\nbd{}node \(\alpha\) is an \emph{entrance} to the basic construction with parameter \(e\) if its predecessor is not a \(G(z-1,e)\)\nbd{}node. 
For \(\alpha\in \cG\), we let \[
    \alpha_0\concat (2u_0)\subseteq\alpha_1\concat (2u_1)\subseteq\cdots \alpha_{k-1}\concat (2u_{k-1})\subseteq \alpha_k=\alpha
\] 
be the sequence such that \(\alpha_0\) is an entrance for the basic construction with parameter \(e\) and \(\alpha_i\) is the predecessor of \(\alpha_{i+1}\) and suppose that \(\alpha_i\) is a \(G(z_0+i,e)\)\nbd{}node (so \(\alpha\) is a \(G(z_0+k,e)\)\nbd{}node). 
For each outcome \((2u)\) of \(\alpha\), we let \(\cT_{e,\alpha_0}(u_0,u_1,\dots,u_{k-1},u)\) extend \(\cT_{e,\alpha_0}(u_0,u_1,\dots,u_{k-1})\) by copying the nodes in \(\cT_e\) of length \(k\), where \(\cT_e\) is the priority tree for the basic construction with parameter \(e\). The idea is that at \(\alpha\concat (2u)\in \cG\) we are only allowed to visit a node \(\sigma\in \cT_{e,\alpha_0}(u_0,u_1,\dots,u_{k-1},u)\) with length \(k-1\): the algorithm for \(\visit(\sigma)\) is \emph{paused} whenever \(\sigma\in \cT_{e,\alpha_0}(u_0,u_1,\dots,u_{k-1},u)\) has length \(k\). We let \(\sigma_{\alpha,u}\) denote the paused node. Whenever we \(\visit(\alpha\concat (2u))\) in the guessing tree \(\cG\), we simultaneously \(\visit(\sigma_{\alpha_{k-1},u_{k-1}})\) for the node \( \sigma_{\alpha_{k-1},u_{k-1}}\in \cT_{e,\alpha_0}(u_0,u_1,\dots,u_{k-1},u)\).

The initialization is the usual left-to-right kind: if we are visiting a node to the left of \(\alpha\concat (2u)\), then the nodes in \(\cT_{e,\alpha_0}(u_0,\dots,u_{k-1},u)\setminus \cT_{e,\alpha_0}(u_0,\dots,u_{k-1})\) are initialized. 

To avoid conflicts between different basic constructions, each entrance \(\alpha\) picks a fresh point \(w_u\) for each \(u\in\omega\). Whenever we \(\visit(\alpha\concat (2u))\) for some \(u\), we enumerate \(\gamma(w_u)\) into \(B\) to undefine all the \(\Gamma\)\nbd{}functionals built by some \(\beta\concat (2u')\subseteq \alpha\) for some \(u'\).

By a routine verification, one derives the following:
\begin{enumerate}
    \item The true path \(\rho\) of \(\cG\) is infinite and there is an infinite sequence \[
        \alpha_0\concat (2u_0)\subseteq \alpha_1\concat (2u_1)\subseteq \cdots
    \]
    such that \(\alpha_i\in \rho\), \(\alpha_0\) is an entrance for the basic construction with parameter \(e=f(a,b)\), \(\alpha_i\) is a \(G(z_0+i,e)\)\nbd{}node for each \(i\) for some \(z_0\), and \(\alpha_i\) is the predecessor of \(\alpha_{i+1}\).
    \item For each \(i\), nodes in the tree \(\cT_{e,\alpha_0}(u_0,\dots, u_i)\) are visited infinitely often and initialized finitely often.
    \item The union of all \(\cT_{e,\alpha_0}(u_0,\dots, u_i)\) is a full copy of \(\cT_e\), the priority tree for the basic construction with parameter \(e\).
\end{enumerate}

This completes the proof.
\end{proof}

\section*{Acknowledgement}
The authors received helpful advice from many peoples during the years when we were working on this paper, and they would like to thank George Barmpalias, Yun Fan, Steffen Lempp, Keng Meng Ng, Theodore Slaman and Liang Yu for valuable discussions and their useful comments.


\begin{thebibliography}{10}

\bibitem{Barmpalias.Cai.ea:2015}
Barmpalias, George;  Cai, Mingzhong;  Lempp, Steffen;  Slaman, Theodore A.  
\newblock On the existence of a strong minimal pair.
\newblock \emph{J. Math. Log.},  15  (2015),  no. 1, 1550003, 28 pp.

\bibitem{Shore:1988}
Shore, Richard A. 
\newblock A noninversion theorem for the jump operator.
\newblock \emph{Ann. Pure Appl. Logic},  40  (1988),  no. 3, 277--303.

\bibitem{Shore.Slaman:1993} Shore, Richard A., and Slaman, Theodore A.
\newblock Working below a high recursively enumerable degree.
\newblock \emph{J. of Sym. Logic}, 58 (1993), pp. 824–859.
\end{thebibliography}
\end{document}